\newtheorem{theorem}{Theorem}
\newtheorem{definition}[theorem]{Definition}
\newtheorem{proposition}[theorem]{Proposition}
\newtheorem{lemma}[theorem]{Lemma}
\newtheorem{corollary}[theorem]{Corollary}
\theoremstyle{remark}
\newtheorem{example}[theorem]{Example}
\newtheorem{remark}[theorem]{Remark}
\newtheorem{hypo}[theorem]{Hypothesis}
\numberwithin{theorem}{section}
\title{Polynomial functors in $\Ver_4^+$}
\author{Kevin Coulembier}
\author{Serina Hu}
\newcommand{\mfrak}{\mathfrak}
\newcommand{\mbb}{\mathbb}
\renewcommand{\1}{\mathds{1}}
\DeclareMathOperator{\Ver}{Ver}
\DeclareMathOperator{\Span}{Span}
\DeclareMathOperator{\End}{End}
\DeclareMathOperator{\uEnd}{\underline{End}}
\DeclareMathOperator{\Hom}{Hom}
\DeclareMathOperator{\uHom}{\underline{Hom}}
\DeclareMathOperator{\im}{im}
\DeclareMathOperator{\Ind}{Ind}
\DeclareMathOperator{\Grp}{Grp}
\DeclareMathOperator{\GL}{GL}
\DeclareMathOperator{\Sym}{Sym}
\DeclareMathOperator{\Rep}{Rep}
\DeclareMathOperator{\Comod}{Comod}
\DeclareMathOperator{\sVec}{sVec}
\DeclareMathOperator{\Vecc}{Vec}
\DeclareMathOperator{\Fun}{Fun}
\DeclareMathOperator{\Aut}{Aut}
\DeclareMathOperator{\Mat}{Mat}
\newcommand{\assign}{:=}
\newcommand{\mN}{\mathbb{N}}
\newcommand{\TC}{\mathbf{TC}}
\newcommand{\mA}{\mathbf{A}}
\newcommand{\bA}{\mathbf{A}}
\newcommand{\bB}{\mathbf{B}}
\newcommand{\bk}{k} 
\newcommand{\mZ}{\mathbb{Z}}
\DeclareMathOperator{\Res}{Res}
\newcommand{\unit}{\1}
\newcommand{\Young}{\mathbf{Young}}
\newcommand{\SYoung}{\mathbf{SYoung}}
\newcommand{\UnEn}{\mathbf{UnEn}}
\newcommand{\cC}{\mathcal{C}}
\newcommand{\mP}{\mathbb{P}}
\newcommand{\cD}{\mathcal{D}}
\newcommand{\Pol}{\mathrm{Pol}}
\newcommand{\SPol}{\mathrm{SPol}}
\newcommand{\mD}{\mathfrak{D}}
\newcommand{\mF}{\mathfrak{F}}
\newcommand{\mG}{\mathfrak{G}}
\newcommand{\bT}{\mathbf{T}}
\DeclareMathOperator{\Indec}{Idc}
\newcommand{\tto}{\twoheadrightarrow}
\DeclareMathOperator{\Sds}{Sds}
\newcommand{\Fr}{\mathrm{Fr}}
\newcommand{\Frob}{\mathrm{Frob}}
\newcommand{\CAlg}{\mathsf{CAlg}}
\begin{document}

\maketitle

\begin{abstract}
    We study polynomial functors in the incompressible category $\Ver_4^+$, which can be viewed as super polynomial functors in characteristic 2. Concretely, we classify additive, exact and simple polynomial functors, and describe how simple polynomial functors evaluate on arbitrary objects. We also determine which objects are not annihilated by any polynomial functors of a given degree and for which objects the symmetric group algebra acts faithfully via the braiding.
\end{abstract}

\tableofcontents

\section{Introduction}

Polynomial functors were introduced in \cite{friedlander_cohomology_1997} as a tool to prove finite generation of the cohomology algebras of finite group schemes over fields (of positive characteristic). They can be interpreted as polynomial representations of $\GL_n$ for $n\gg0$ and have since appeared extensively in representation and category theory. In \cite{Axtell} `super polynomial functors' were introduced, and they have similarly led to a proof of finite generation of the cohomology algebras of finite group superschemes in \cite{drupieski_cohomology_2016}. We can think of (super) polynomial functors as polynomial functors associated to the symmetric tensor category, in the sense of \cite{EGNO}, of (super) vector spaces.

Over fields of characteristic zero, the main structure theory of (symmetric) tensor categories was established by Deligne, see \cite{deligne_categories_1990, deligne_tensorielles_2002} and any theory of polynomial functors is immediately seen to be universal and must correspond to Schur functors. 
In positive characteristic, the structure theory of tensor categories is much more involved and not completely understood, see \cite{benson_new_2021, coulembier_frobenius_2022} and references therein. Several equivalent approaches to polynomial functors for tensor categories were developed in \cite{coulembier_inductive_2024}. Here, the polynomial functors depend heavily on the choice of tensor category, and the choices of (super) vector spaces recover the familiar notions discussed above. By results in \cite{Incomp}, it becomes a natural important goal to understand polynomial functors in all `incompressible' tensor categories. This is in particular motivated by the conjecture of finite generation of cohomology of finite tensor categories in \cite{EO-finite}, and the search for higher Frobenius functors as in \cite{CF}. The complete list of currently known incompressible categories has been obtained in \cite{benson_new_2021, coulembier_monoidal_2021}. The first natural examples beyond (super) vector spaces are $\Ver_4^+$ in characteristic $2$ and $\Ver_p$ in characteristic $p\ge 5$. In the current paper, we focus on the former case.

To describe effectively the category of polynomial functors for $\Ver_4^+$, in Section~\ref{sec-polyfun} we revisit the theory of inductive systems and polynomial functors from \cite{coulembier_inductive_2024}, and obtain some new results on generators of inductive systems and on additive and exact polynomial functors. We then apply that theory to complete some existing results from \cite{Axtell, Giordano} on super polynomial functors, or provide new proofs.

Our main results, obtained in Section~\ref{sec-Ver4}, are concerned with polynomial functors for $\Ver_4^+$. We classify simple polynomial functors, by combining the classification of simple representations of general linear groups in $\Ver_4^+$ from \cite{hu_supergroups_2024} with techniques of odd reflections inspired by supergroups, used similarly in \cite{Brundan-Kujawa}. We then classify all additive (not necessarily semisimple) and exact polynomial functors. Finally, we determine which objects in $\Ver_4^+$ have the property that no polynomial functors of a given degree vanish on them, by describing an analogue of the Mullineux involution in characteristic 2. We conclude by some results on invariant theory.

\section{Preliminaries}\label{sec-prel}
Unless further specified, we let $k$ be an algebraically closed field.

\subsection{Tensor categories}\label{sec-intro-tenscat}
We refer to \cite{EGNO} for all claims and unexplained concepts in this subsection. A $k$-linear monoidal category $(\cC,\otimes,\1)$ is a tensor category if
\begin{enumerate}
    \item $\cC$ is abelian, with all objects of finite length;
    \item $k\to\End(\1)$ is an isomorphism;
    \item $(\cC,\otimes,\1)$ is rigid in the sense that every object admits monoidal duals.
\end{enumerate}
A tensor functor is an exact $k$-linear monoidal functor. It follows that $\unit$ is simple and that tensor functors are faithful. Unless specifically mentioned otherwise we will henceforth take the convention that tensor category and functor automatically refer to symmetric tensor categories and functors.

The most basic examples are the category of finite dimensional vector spaces $\Vecc$ and the category of finite dimensional supervector spaces $\sVec$ (if $\mathrm{char}(k)\not=2$), which are $\mZ/2$-graded vector spaces with sign twisted symmetric braiding. The two simple objects are then the unit $\1$ and the odd line $\bar{\1}$.

Of particular importance for the study of polynomial functors are incompressible tensor categories, such as $\Vecc$ and $\sVec$ and $\Ver_4^+$ as in \S \ref{ver4_intro} below, see \cite{Incomp} for an overview.

We denote the category of commutative algebras in the ind-completion of a tensor category $\cC$ by 
$\mathsf{CAlg}\cC$. An affine group scheme in $\cC$ is a representable functor $\mathsf{CAlg}\cC\to\Grp$, where $\Grp$ is the category of groups. It is thus represented by a commutative Hopf algebra. The most important example for us is the general linear group $\GL(X)$ of an object $X$, which sends $A\in\mathsf{CAlg}\cC$ to the automorphism group of the $A$-module $A\otimes X$. For example $\GL(\1)$ is the ordinary multiplicative group $\mathbb{G}_m$ interpreted as an affine group scheme in $\cC$ represented by the Hopf algebra $k[x,x^{-1}]$ over $k$ via $\Vecc\subset\cC$.

We have two important group homomorphisms into $\GL(X)$. The first is $\mathbb{G}_m\to\GL(X)$ that sends an automorphism $A\to A$ to its tensor product with $X$. This turns every $\GL(X)$-representation in $\cC$ into a $\mathbb{G}_m$-representation, {\it i.e.} a $\mZ$-graded object in $\cC$. In particular, a simple $\GL(X)$-representation $V$ yields an object contained in one degree, called the degree of $V$. The second is 
$\varepsilon:\pi_1(\cC)\to \GL(X)$, where $\pi_1(\cC)$ is the fundamental group of $\cC$ from \cite{deligne_categories_1990}. The group scheme $\pi_1(\cC)$ sends $A\in\mathsf{CAlg}\cC$ to the automorphism group of $A\otimes-$ as a monoidal functor from $\cC$ to the category of $A$-modules. The morphism $\varepsilon$ is simply evaluation at $X$. The category $\Rep(\GL(X),\varepsilon)$ is the tensor category of $\GL(X)$-representations in $\cC$ such that the action of $\pi_1(\cC)$ obtained via $\varepsilon$ gives the natural action coming from the definition of $\pi_1(\cC)$. A $\GL(X)$-representation is polynomial of degree $d$ if it occurs as a subquotient of $X^{\otimes d}\otimes Y$ for some $Y\in\cC$. We consider the corresponding abelian category $\Rep^d\GL(X)$ and its subcategory $\Rep^d(\GL(X),\varepsilon)$.

The Deligne tensor product of two tensor categories $\cC_1,\cC_2$ (or more general $k$-linear abelian categories with similar finiteness conditions) will be denoted by $\cC_1\boxtimes\cC_2$.

By a pseudo-abelian cateogry, we mean an additive category that is idempotent complete (Karoubi). For two pseudo-abelian categories $\bA$, $\bB$ we denote by $\bA\otimes_k\bB$ the Karoubi completion of their ordinary tensor product as $k$-linear categories (their coproduct).
For example, for finite groups $G,H$, we can realise
$$(\Rep G)\otimes_k (\Rep G)\;\subset\; \Rep(G\times H)\;\simeq\;\Rep G \boxtimes \Rep H$$
as the category of direct summands of exterior tensor products $V_1\boxtimes V_2$.

\subsection{The higher Verlinde category $\Ver_4^+$}\label{ver4_intro}

Let $k$ be an algebraically closed field of characteristic 2, not necessarily algebraically closed. Consider the Hopf algebra $k[\delta]/\delta^2$ where $\delta$ is primitive. The higher Verlinde category $\Ver_4^+=\Ver_4^+(k)$ has a concrete description as the monoidal category of finite-dimensional $k[\delta]/\delta^2$-modules with symmetric braiding given by the R-matrix $1 \otimes 1 + \delta \otimes \delta$, i.e.
\begin{equation*}
   c=c_{V,W}:V\otimes W\xrightarrow{\sim}W\otimes V,\quad c(v \otimes w) = w \otimes v + \delta w \otimes \delta v.
\end{equation*}
From now on, we abbreviate $\delta x$ to $x'$.

The category $\Ver_4^+$ has one simple object $\1$, the trivial representation, with indecomposable self-dual projective cover $P=k[\delta]/\delta^2$ the regular representation. These are the only indecomposable representations, so every object in $\Ver_4^+(k)$ is of the form $m\1 + nP$. For brevity, we will often denote $m\1$ by $m$.

Notice that $\Ver_4^+$ admits a forgetful monoidal functor to $\text{Vec}_k$, although this functor does not respect the braiding. Nevertheless, this means we can treat objects in $\Ver_4^+$ concretely as vector spaces with additional structure (an endomorphism $\delta$ such that $\delta^2 = 0$), and any relations involving the braiding can be written in terms of elements of these vector spaces and the $\delta$-action using the $R$-matrix. Hence, we can speak of elements and bases of objects in $\Ver_4^+$.

For example, in $\Ver_4^+$, a commutative algebra $A$ corresponds to an ordinary, possibly non-commutative algebra over $k$ equipped with a derivation $\delta$ with $\delta^2 = 0$. Since $\delta$ is primitive, 
\begin{equation*}
    (ab)' = a'b + ab'
\end{equation*}
and the commutativity of $A$ in $\Ver_4^+(k)$ is equivalent to
\begin{equation*}
    ab = ba + a'b'.
\end{equation*}
In particular, we obtain that
\begin{equation*}
    (a^n)' = n a^{n-1}a', \qquad
    (a')^2 = 0,
\end{equation*}
and that $\ker \delta$ is in the center of $A$.

We denote by $H$ the $k$-algebra $k[t]/(t^2)$ interpreted as a (commutative) algebra in $\Ver_4^+$, {\it i.e.} $t'=0$. It can be made into a cocommutative Hopf algebra with $t$ primitive. As a functor of points, it sends $A$ to the additive group of elements that square to 0 in $\ker \delta \subset A$. As proved in \cite{hu_supergroups_2024}, we can describe the fundamental group of $\Ver_4^+$:

\begin{proposition}
    The fundamental group $\pi_1$ of $\Ver_4^+$ has coordinate algebra $k[\pi_1] = H.$ An element $b \in \pi_1(A)\subset A$ acts as the identity on $A \otimes \1$ and as $\begin{pmatrix} 1 & 0 \\ b & 1 \end{pmatrix}$ on $A \otimes P$.
\end{proposition}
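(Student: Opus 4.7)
My plan is to compute $\Aut^{\otimes}(A \otimes -)$ directly by exploiting the fact that $\Ver_4^+$ has only two indecomposable objects.

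First I determine the shape of $\phi_P$. Since every object of $\Ver_4^+$ is a direct sum of copies of $\1$ and $P$ and $\phi_\1 = \id_{A}$, any monoidal natural automorphism $\phi$ is determined by $\phi_P$. Writing $P = k e_1 \oplus k e_2$ with $e_1' = e_2$ and $e_2' = 0$, an $A$-linear endomorphism of $A \otimes P$ commuting with $\delta$ has matrix (in the basis $\{1\otimes e_i\}$) of the form $\begin{pmatrix} x & x' \\ y & x+y' \end{pmatrix}$ with $x, y \in A$. Applying naturality to the two nonzero morphisms $\1 \hookrightarrow P$ (sending $1 \mapsto e_2$) and $P \twoheadrightarrow \1$ (sending $e_1 \mapsto 1,\, e_2 \mapsto 0$) forces $x = 1$ and $y' = 0$, so $\phi_P = \begin{pmatrix} 1 & 0 \\ b & 1 \end{pmatrix}$ for some $b \in A$ with $b' = 0$.

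Next I use monoidal compatibility to force $b^2 = 0$. The assignment $\iota \colon P \to P \otimes P$ given by $\iota(e_1) = e_1 \otimes e_1$ and $\iota(e_2) = e_1 \otimes e_2 + e_2 \otimes e_1$ is a morphism in $\Ver_4^+$, as a direct check shows it commutes with $\delta$. Monoidality demands $\phi_{P \otimes P} = \phi_P \otimes_A \phi_P$ under the isomorphism $(A\otimes P)\otimes_A(A\otimes P) \cong A\otimes P\otimes P$, while naturality with $\iota$ demands $\phi_{P \otimes P} \circ (A \otimes \iota) = (A \otimes \iota) \circ \phi_P$. Evaluating both at $1\otimes e_1$, and using that $b \in \ker\delta$ braids trivially past elements of $P$ (so that $\otimes_A$ reduces to the ordinary tensor product), the first side yields $e_1\otimes e_1 + b(e_1\otimes e_2 + e_2\otimes e_1) + b^2(e_2\otimes e_2)$ while the second gives $e_1\otimes e_1 + b(e_1\otimes e_2 + e_2\otimes e_1)$. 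Equating forces $b^2 = 0$.

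Conversely, for any $b \in A$ with $b' = 0$ and $b^2 = 0$, the formula above defines $\phi^b_P$, and this extends additively to all objects of $\Ver_4^+$; monoidality on higher tensor powers $P^{\otimes n}$ follows from the analogous computation, in which all contributions $b^k$ with $k\geq 2$ vanish by hypothesis. Composition yields $\phi^b \circ \phi^{b'} = \phi^{b+b'}$ by matrix multiplication, so $b \mapsto \phi^b$ is a functorial abelian group isomorphism from $\{b \in A : b' = 0,\, b^2 = 0\}$ onto $\pi_1(A)$. The left-hand functor is represented by $H = k[t]/(t^2)$ with primitive generator $t$, giving $k[\pi_1] = H$ as Hopf algebras. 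The main obstacle is the concrete computation in the second paragraph: correctly unwinding the isomorphism $(A\otimes P)\otimes_A(A\otimes P) \cong A\otimes P\otimes P$ and tracking how the braiding intertwines $b$ with elements of $P$; the crucial simplification is that $b \in \ker\delta$ means braidings involving $b$ collapse to identities.
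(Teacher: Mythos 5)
Your argument is correct. The paper itself gives no proof of this proposition (it is quoted from the reference \cite{hu_supergroups_2024}), so there is nothing to compare against line by line; but your direct verification is sound: additivity and naturality reduce everything to $\phi_P$, the two morphisms $\1\to P$ and $P\to\1$ pin down $\phi_P=\begin{pmatrix}1&0\\ b&1\end{pmatrix}$ with $b\in\ker\delta$, and your computation with $\iota\colon P\to P\otimes P$ (which is indeed a morphism in $\Ver_4^+$, and where $b\in\ker\delta$ does braid trivially) correctly extracts $b^2=0$, matching the functor of points of $H=k[t]/(t^2)$ with $t$ primitive. The only place that is slightly informal is the converse direction, where you assert monoidality ``by the analogous computation'' and that the additive extension is well defined; this is most cleanly closed by observing that your $\phi^b$ is exactly $\id+b\,\delta$ on every $A\otimes X$, which is automatically natural (all morphisms commute with $\delta$) and monoidal because $\delta$ is primitive and $b^2=0$ kills the cross term $b^2\,\delta\otimes\delta$; the group law $\phi^b\circ\phi^c=\phi^{b+c}$ then also falls out immediately. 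With that one-line repackaging your proof is complete.
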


\subsection{General linear groups in $\Ver_4^+$}\label{GLX:intro}

The irreducible representations of general linear groups in $\Ver_4^+$ were studied in \cite{hu_supergroups_2024}. Here we review some results that will be useful for \S \ref{sec-Ver4}. Let $X = m \1 + nP$. For a commutative algebra $A \in \CAlg(\Ver_4^+)$, let $\Mat_{i, j}(A)$ denote the $i \times j$ matrices with entries in $A$. For $M \in \Mat_{i, j}(A)$, let $M' \in \Mat_{ij}(A)$ be the matrix we get from applying $\delta$ to each entry of $M$, i.e. $M'_{\ell_1, \ell_2} = (M_{\ell_1, \ell_2})'$. 

 As a functor of points, $\GL(X)$ sends a commutative algebra $A \in \CAlg(\Ver_4^+)$ to $\Aut_A(X \otimes A)$, which we can identify with the group of $(m + 2n) \times (m + 2n)$ matrices with entries in $A$ of the form \begin{equation}\label{eq:matrix} \begin{pmatrix} F & C & C' \\ B' & D & D' \\ B & E & D + E' \end{pmatrix}\end{equation} where $F$ is an invertible $m \times m$ matrix with entries in $\ker \delta \subset A$ (i.e. $F \in \GL(m)(A)$), $D, E \in \Mat_{n, n}(A)$, $C \in \Mat_{m ,n }(A)$, $B \in \Mat_{n, m}(A)$, and $D$ is invertible. For example, if the $i$-the basis vector of $m\unit$ is denoted by $e_i$ and the generator of the $j$-th copy of P denoted by $f_j$, then the element corresponding to the above matrix sends $e_i$ to $\sum_{m}F_{mi}\otimes e_m+\sum_n(B_{ni}\otimes f_n)'$. Let $\det\in A$ be the determinant of \eqref{eq:matrix}, which is well-defined modulo the image of $\delta$, since changing the order of the variables changes $\det$ by an element in the ideal generated by the image of $\delta$.

\begin{proposition}
    The determinant of a matrix in $\Mat_{m+2n}(A)$ as in \eqref{eq:matrix} can be represented by an element of $\ker \delta\subset A$.
\end{proposition}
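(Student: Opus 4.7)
My plan is to produce an explicit representative of $\det$ in $\ker\delta$. Specifically, I claim that
\[
\det M \;\equiv\; (\det D)^2 \cdot \det F \pmod{(\mathrm{im}\,\delta)}
\]
in $A$, and that the right-hand side lies in $\ker\delta$, giving the desired representative.

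That $(\det D)^2 \cdot \det F \in \ker\delta$ is straightforward. First, any element of $\ker\delta$ is central in $A$: for $a \in \ker\delta$ and $b \in A$, the commutativity relation yields $ab - ba = a'b' = 0$. Hence $\ker\delta$ is a commutative subring of $A$, and $\det F$, being a polynomial in entries of $F$ (which lie in $\ker\delta$), belongs to $\ker\delta$. Next, $(\det D)^2 \in \ker\delta$ because in characteristic $2$ every square $a^2$ lies in $\ker\delta$: by Leibniz and centrality of $a'$, $\delta(a^2) = a'a + aa' = 2aa' = 0$. The product $(\det D)^2 \cdot \det F$ therefore lies in $\ker\delta$.

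To prove the congruence, I would work in the commutative quotient $\bar A := A/(\mathrm{im}\,\delta)$, where standard determinantal identities apply. The key identities are $C D^{-1} D' \equiv C'$ and $E D^{-1} D' \equiv E'$ modulo $(\mathrm{im}\,\delta)$, both following from $\delta(XD) = X'D + XD'$ applied to $X \in \{C, E\}$: rearranging gives $XD' \equiv -X'D$, and multiplying on the right by $D^{-1}$ (valid in $\bar A$) gives $XD^{-1}D' \equiv -X'$, which equals $X'$ in characteristic $2$. Performing the column operation that replaces the third block column by itself plus (second block column) $\cdot\, D^{-1}D'$---right multiplication by a block-unipotent matrix of determinant $1$---transforms the image of $M$ in $\bar A$ into
\[
\begin{pmatrix} F & C & 0 \\ B' & D & 0 \\ B & E & D \end{pmatrix}.
\]

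The resulting matrix is block lower triangular, so its determinant in $\bar A$ equals $\det D \cdot \det\begin{pmatrix} F & C \\ B' & D \end{pmatrix}$. Applying the Schur complement formula to the $(m+n) \times (m+n)$ block and noting that $C D^{-1} B' \equiv 0$ in $\bar A$ (because entries of $B'$ lie in $\delta(A) \subseteq (\mathrm{im}\,\delta)$), the latter determinant becomes $\det D \cdot \det F$. Combining yields $\det M \equiv (\det D)^2 \cdot \det F$ in $\bar A$, as claimed. The main obstacle is to identify the right column operation and verify the two key identities $X D^{-1} D' \equiv X'$ in $\bar A$; once these are in place, the block-determinant and Schur complement manipulations are entirely routine.
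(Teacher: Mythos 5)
Your proof is correct and follows essentially the same route as the paper's: both reduce the matrix to a block-triangular form to identify the determinant as $(\det D)^2\det F$ modulo the ideal generated by $\operatorname{im}\delta$, and both then conclude via the facts that squares lie in $\ker\delta$ and that $\det F$ is a polynomial in the central entries of $F$, which lie in $\ker\delta$. The only difference is packaging: you compute the congruence in the commutative quotient $A/(\operatorname{im}\delta)$, where all primed blocks already vanish (so your column operations and the identities $XD^{-1}D'\equiv X'$ are actually superfluous), while the paper performs an exact block factorization over $A$ itself and picks up a harmless extra factor $\det(I_n+(ED^{-1})')\in\ker\delta$.
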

\begin{proof}
    As described in \cite{hu_supergroups_2024} Section 3, a matrix in $\GL(nP)(A)$ of the form $\begin{pmatrix} D & D' \\ E & D + E'\end{pmatrix}$ can be factored as
    \begin{equation*}
        \begin{pmatrix} I_n & 0 \\ ED^{-1} & I_n + (ED^{-1})' \end{pmatrix} \begin{pmatrix} D & D' \\ 0 & D \end{pmatrix}.
    \end{equation*}
    Hence, its determinant is $\det(I_n + (ED^{-1})')  \cdot (\det D)^2$, which lies in the kernel of $\delta$ as the entries of $I_n + (ED^{-1})'$ all lie in $\ker \delta$ and $(\det D)^2 \in \ker \delta$.

    An arbitrary matrix in $\GL(m + nP)(A)$ can also be factored as
    \begin{equation*}
    \begin{pmatrix} F & C & C' \\ B' & D & D' \\ B & E & D + E' \end{pmatrix}
    = \begin{pmatrix}
        I_m & 0 & 0 \\
        B'F^{-1} & I_n & 0 \\
        BF^{-1} & 0 & I_n
    \end{pmatrix}
    \begin{pmatrix}
        F & C & C' \\
        0 & B'F^{-1}C + D & (B'F^{-1}C + D)' \\
        0 & BF^{-1}C + E & B'F^{-1}C + D + (BF^{-1}C + E)'
    \end{pmatrix}.
    \end{equation*}
    Therefore, the determinant of this matrix is the determinant of the right matrix here. Notice that $F$ has entries in $\ker \delta$ and that $M = \begin{pmatrix} B'F^{-1}C + D & (B'F^{-1}C + D)' \\
        BF^{-1}C + E & B'F^{-1}C + D + (BF^{-1}C + E)' \end{pmatrix}$
    is a matrix in $\GL(nP)(A)$, so its determinant is in $\ker \delta$. Hence, $(\det F)(\det M)$ is also in the kernel of $\delta$.
\end{proof}

\begin{remark}
    Note that in general $\det$ will also have representatives not in $\ker\delta$. For every monomial in $\delta$, we must choose an ordering for the variables, and different orderings may yield representatives not in $\ker \delta$. For instance, let $x, y, x', y'$ be a basis of $2P$. Then for a commutative algebra $A$, if we pick $a, b, c, d \in A$ such that $ad - bc \in A^\times$, the matrix
    \begin{equation*}
        \begin{pmatrix} a & b & a' & b' \\
        c & d & c' & d' \\
        0 & 0 & a & b \\
        0 & 0 & c & d
        \end{pmatrix}
    \end{equation*}
    is an element of $\GL(2P)(A)$. One representative of the determinant of this matrix is $(ad+bc)^2$, which is in the kernel of $\delta$. Another representative is $(ad + bc)(da + cb)$. Since the difference $a'd'(ad+bc)$ between the two representatives is not in the kernel, the second representative is not in $\ker\delta$.

    
\end{remark}

\color{black}

    As an algebra, the coordinate ring $k[\GL(X)]$ is the commutative algebra (in $\Ver_4^+$) freely generated by $F_{ij}$, $B_{aj}, B'_{aj}$, $C_{ib}, C'_{ib}$, $D_{ab}, D'_{ab}, E_{ab}, E'_{ab}$, with $1\le i,j\le m$ and $1\le a,b\le n$, and $u$, modulo $u \cdot \det = \det \cdot u = 1$. Here $\det\in\ker\delta$ is an arbitrarily chosen representative of the determinant. As discussed in \cite{benson2025groupschemesliealgebras}, the effect of adjoining the inverse of $\det$ is independent of the order of the variables we choose for $\det$.

    The coproduct is given by
    \begin{align*}
        \Delta (F_{ij}) &= \sum_{k = 1}^m F_{ik} \otimes F_{kj} + \sum_{k=1}^n (C_{ik} \otimes B'_{kj} + C'_{ik} \otimes B_{kj}) \\
        \Delta(B_{ij}) &= \sum_{k = 1}^m B_{ik} \otimes F_{kj} + \sum_{k=1}^n (E_{ik} \otimes B'_{kj} + (D_{ik} + E'_{ik}) \otimes B_{kj}) \\
        \Delta(C_{ij}) &= \sum_{k = 1}^m F_{ik} \otimes C_{kj} + \sum_{k = 1}^n (C_{ik} \otimes D_{kj} + C'_{ik} \otimes E_{kj}) \\
        \Delta(D_{ij}) &= \sum_{k=1}^m B'_{ik} \otimes C_{kj} + \sum_{k = 1}^n (D_{ik} \otimes D_{kj} + D'_{ik} \otimes E_{kj}) \\
        \Delta(E_{ij}) &= \sum_{k=1}^m B_{ik} \otimes C_{kj} + \sum_{k= 1}^n (E_{ik} \otimes D_{kj} + (D_{ik} + E'_{ik}) \otimes E_{kj}).
    \end{align*}

\begin{example}
    Elements of $\GL(P)(A)$ are of the form $(a, b) = \begin{pmatrix} a & a' \\ b & a + b' \end{pmatrix}$ with $a \in A^\times$.
\end{example}

\begin{remark}\label{rem:epsilon}
The natural map $\varepsilon: \pi_1 \to \GL(X)$, $X = m + nP$ satisfies $\varepsilon(b) = \begin{pmatrix} I_m & 0 & 0 \\ 0 & I_n & 0 \\ 0 & bI_n & I_n \end{pmatrix}$ for $b \in \pi_1(A)$. Likewise, the map $k[\GL(X)] \mapsto k[\pi_1]$ sends $F_{ii}, D_{ii} \mapsto 1$, $E_{ii} \mapsto t$, and all other degree 1 generators to 0. 
\end{remark}

The following definition will be discussed in more detail and generality in Section~\ref{sec:additive}.
\begin{definition}
    Denote by $k[\End(X)] \subset k[\GL(X)]$ the polynomial ring in $F_{ij}, 1 \le i, j \le m$, $B_{ij}, B'_{ij}, 1 \le i \le n, 1 \le j \le m$, $C_{ij}, C'_{ij}$, $1 \le i \le m$, $1 \le j \le n$, $D_{ij}, D'_{ij}, E_{ij}, E'_{ij}, 1 \le i, j \le n$, with coproduct $\Delta_\times$ from restriction of the coproduct $\Delta$ on $k[\GL(X)]$. As an algebra, it is isomorphic to the coordinate ring of $\End(X)$, the group sending $A \mapsto \End_A(A \otimes X)$, and hence is equipped with another coproduct $\Delta_+$.
\end{definition}

From now on, when we refer to a representation $M$ of $\GL(X)$ in $\Ver_4^+$, we assume that $M \in \Ver_4^+$, not in the ind-category, and the $\GL(X)$-action is compatible with the action of $\pi_1$ on $M$. We thus consider $\Rep(\GL(X), \varepsilon)$.

We review the irreducible representations of $\GL(P)$ per degree, as in \S \ref{sec-intro-tenscat}. Let $T_n = S^n(P)$ for $n = 1, 2, 3$, let $\chi$ be the $\GL(P)$-character where $(a, b)$ acts as $a^4$, and $\xi$ be the $\GL(P)$-character where $(a, b)$ acts as $1 + (ba^{-1})'$. Then in degree $\ell = 4q + r$ with $0 \le r\le 3$, the irreducible representations of $\GL(P)$ are $$ \begin{cases} \chi^q, \chi^q \otimes \xi & r = 0, \\ T_r \otimes \chi^q, T_r \otimes \chi^q \otimes \xi & r = 1, 3, \\ T_r  \otimes \chi^q & r = 2. \end{cases}$$

The irreducible representations of $\GL(m + nP)$ can, via a choice of Borel subgroup, be labelled by their `highest weights'. The latter are by definition simple representations of $\mathbb{G}_m^{\times m}\times \GL(P)^{\times n}$. Concretely, the irreducible representations of $\GL(m + nP)$ correspond to tuples $$(\lambda|\Lambda) = (\lambda_1, \dots, \lambda_m|\Lambda_1, \dots, \Lambda_n)$$ such that $\lambda_i \in \mbb{Z}$, $\lambda_1 \ge \cdots \ge \lambda_m$, and $\Lambda_i \in \text{Irrep}(\GL(P))$, $\deg \Lambda_1 \ge \cdots \ge \deg \Lambda_n$. In particular, irreducible representations of $\GL(m + nP)$ correspond to pairs of $\GL(m)$-irreps and $\GL(nP)$-irreps.

\section{Generalities on polynomial functors} \label{sec-polyfun}
Let $k$ be a field. We review and expand some notions from \cite{coulembier_inductive_2024}.

\subsection{Closed inductive systems}

We denote by $S_n$ the permutation group of the set $\{1,2,\ldots, n\}$, so that we have a preferred inclusion $S_{n-1}<S_n$.

A {\bf closed inductive system} is the assignment of a pseudo-abelian subcategory $\mA^n\subset \Rep_{\bk} S_n$, for each $n\in\mZ_{>0}$, so that $\Res^{S_n}_{S_{n-1}}$ sends $\mA^n$ into $\mA^{n-1}$ such that additionally
\begin{itemize}
\item[(i)] Each $\bA^n\subset\Rep S_n$ is a rigid monoidal subcategory.
\item[(ii)]  For all $m,n\in\mZ_{>0}$, the induction product
$$\mA^m \times\mA^n\xrightarrow{\Ind^{S_{m+n}}_{S_m\times S_n}(-\boxtimes -)}\Rep S_{m+n}$$
takes values in $\mA^{m+n}$.
\end{itemize}
One can define inductive systems also as classes of indecomposable representations, and we write $\Indec\bA^n$ for the set of isomorphism classes of indecomposable representations in $\bA^n$.

The minimal example of a closed inductive system is $\Young$, which consists of direct sums of Young modules.
Since there is only one closed inductive system in characteristic zero ($\bA^n=\Rep S_n=\Young^n$), we now assume that $\mathrm{char}(k)=p>0$.

The {\bf signed Young modules} are the indecomposable direct summands of the $\bk S_n$-modules that are induced from one-dimensional representations of Young subgroups (which are exterior tensor products of trivial and sign representations). We have the corresponding closed inductive system $\SYoung$, with $\SYoung=\Young$ if $p=2$, which is the minimal closed inductive system containing the sign representations. In \cite{coulembier_inductive_2024}, the closed inductive system $\UnEn$ was introduced. We call an $S_n$-representation
{\bf unentangled} if its restriction to every Young subgroup $S_\lambda<S_n$ (for $\lambda\vDash n$) is a direct summand of a module $M_1\boxtimes M_2\boxtimes\cdots \boxtimes M_l$ for $S_{\lambda_i}$-representations $M_i$, or equivalently, a direct sum of such modules. The unentangled representations form a closed inductive system $\UnEn$, and we say that an inductive system $\bA$ is unentangled if $\bA\subset\UnEn$. It then follows that, for $i+j=n$, the functor $\Res^{S_n}_{S_i\times S_j}$ sends $\bA^n$ into $\bA^i\otimes_k\bA^j$.

\begin{remark}
    Contrary to the general case of inductive system, also defined in \cite{coulembier_inductive_2024}, there is a unique minimal {\em closed} inductive system containing a specified class of (indecomposable) representations. Focusing on minimal sets of generators then leads to the concept in the following proposition.
\end{remark}

Recall, see~\cite[\S 9]{Alperin}, that a {\bf vertex} of a representation $V$ of a finite group $G$ is a minimal subgroup $H<G$ such that $V$ is a direct summand of a representation induced from $H$ to $G$. A vertex is unique up to conjugation, so that we can often speak of `the' vertex, and is always a $p$-group .

\begin{proposition}\label{prop:seeds}
    Let $\bA$ be an unentangled closed inductive system and consider an indecomposable $M\in\bA^n$, for some $n\in\mZ_{>0}$. Then the following conditions are equivalent:
    \begin{enumerate}
        \item $M$ is not a direct summand of $\Ind^{S_n}_{S_i\times S_j}(V\boxtimes W)$, for $i,j\in\mZ_{>0}$ with $i+j=n$ and $V\in\bA^i$ and $W\in \bA^j$.
        \item $M$ is not a direct summand of any $S_n$-representation induced from a proper Young subgroup.
        \item The vertex of $M$ is transitive.
    \end{enumerate}
    A necessary condition for properties (1)-(3) to be satisfied is that $n=p^m$ for some $m\in\mZ_{>0}$. 
\end{proposition}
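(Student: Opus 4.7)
I would establish the three conditions are equivalent via the cycle $(3)\Rightarrow (2)\Rightarrow (1)\Rightarrow (3)$, after which the necessary condition $n=p^m$ reduces to elementary group theory about vertex $p$-groups.

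Two of these implications are essentially formal. For $(3)\Rightarrow (2)$: if $M$ is a direct summand of $\Ind^{S_n}_{S_\lambda}N$ for some proper Young subgroup $S_\lambda$ and some $S_\lambda$-representation $N$, then $M$ is $S_\lambda$-projective, so the vertex $Q$ of $M$ is contained in an $S_n$-conjugate of $S_\lambda$. Such a conjugate is again a proper Young subgroup and therefore non-transitive on $\{1,\ldots,n\}$, as is any subgroup thereof, contradicting the transitivity of $Q$. The implication $(2)\Rightarrow (1)$ is then trivial since $\Ind^{S_n}_{S_i\times S_j}(V\boxtimes W)$ is precisely a representation induced from the proper Young subgroup $S_i\times S_j$. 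Finally, the vertex of any indecomposable representation is a $p$-group, so if $Q$ is a transitive subgroup of $S_n$, the orbit--stabilizer relation $n=[Q:\mathrm{Stab}_Q(1)]$ forces $n$ to divide $|Q|$, hence to be a power of $p$.

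The substantive step is $(1)\Rightarrow (3)$, which I would prove contrapositively. Assume $Q$ is not transitive, and group its orbits on $\{1,\ldots,n\}$ into two nonempty blocks, obtaining $Q\le H:= S_i\times S_j$ with $i,j>0$ and $i+j=n$. Higman's criterion (cf.\ \cite{Alperin}) says that $M$ being $H$-projective is equivalent to $M$ being a direct summand of $\Ind^{S_n}_H\Res^{S_n}_H M$, which applies here since the vertex of $M$ lies in $H$. The unentangled hypothesis on $\bA$ gives $\Res^{S_n}_H M\in\bA^i\otimes_k\bA^j$. Because $\bk$ is algebraically closed, every indecomposable representation of $S_i\times S_j$ is a box product of indecomposables, and combined with the pseudo-abelian structure of $\bA^i\otimes_k\bA^j$ this yields a decomposition $\Res^{S_n}_H M=\bigoplus_r V_r\boxtimes W_r$ with $V_r\in\bA^i$ and $W_r\in\bA^j$. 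Inducing back, Krull--Schmidt shows $M$ is a summand of $\Ind^{S_n}_H(V_{r_0}\boxtimes W_{r_0})$ for some index $r_0$, contradicting condition (1).

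The main obstacle lies entirely in this last step: the vertex theory alone gives only a containment $Q\le S_i\times S_j$, and upgrading it to a genuine induction with both tensor factors controlled by $\bA$ requires both Higman's criterion (to replace an uncontrolled inducing representation by the restriction of $M$ itself) and the unentangled hypothesis (to keep that restriction inside $\bA^i\otimes_k\bA^j$). Without the unentangled assumption the whole approach would break down, which is consistent with the statement being restricted to that setting.
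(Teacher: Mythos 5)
Your proof is correct and follows essentially the same route as the paper: the implications $(3)\Rightarrow(2)\Rightarrow(1)$ are formal, the key step $(1)\Rightarrow(3)$ is handled contrapositively via relative projectivity (\cite[Proposition~9.1]{Alperin}, i.e.\ $M$ is a summand of $\Ind^{S_n}_{S_i\times S_j}\Res^{S_n}_{S_i\times S_j}M$) combined with the unentangled hypothesis, and the final claim follows from the vertex being a transitive $p$-subgroup. You merely spell out details (Krull--Schmidt, the decomposition of the restriction into box products) that the paper leaves implicit.
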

\begin{proof}
The implications (3)$\Rightarrow$(2)$\Rightarrow$(1) follow by definition. To prove that (1) implies (3), consider an $S_n$-representation $M$ with non-transitive vertex. It follows from \cite[Proposition~9.1]{Alperin} that $M$ is a direct summand of $\Ind^{S_n}_{S_i\times S_j}\Res^{S_n}_{S_i\times S_j}M$, for some $i+j=n$. The conclusion thus follows from the assumption that $\bA$ be unentangled.

    Condition (3) implies that there exists a transitive $p$-subgroup of $S_n$, which requires $n=p^m$.
\end{proof}

We call the indecomposable representations $M$ in $\bA$ for which the conditions in Proposition~\ref{prop:seeds} are satisfied the {\bf seeds} of $\bA$, since their exterior products canonically generate $\bA$. The obvious example of a seed is the trivial representation $\unit$ of $S_{p^m}$, for every $m$. These are called the {\bf trivial seeds} and they are contained in any inductive system. We denote by $\Sds\bA^n\subset\Indec\bA^n$ the set of (isomorphism classes of) seeds of degree $n$. Hence $\Sds\bA^n=\varnothing$ unless $n$ is a power of $p$.

\begin{example}\label{ex:seeds}
    Assume $p>2$. For the closed inductive system $\SYoung$, the non-trivial seeds are precisely the sign representations of $\Rep S_{p^m}$ for $m\in\mZ_{>0}$, as follows quickly from Proposition~\ref{prop:seeds}.
\end{example}

\subsection{Polynomial functors} Let $k$ be an algebraically closed field.

For a tensor category $\cC$ over $k$ and $X\in\cC$, we have the inductive system $\bB_X[\cC]$ and the closed inductive system $\bB[\cC]=\sum_{Y\in\cC}\bB_Y[\cC]$, as defined in \cite{coulembier_inductive_2024}. By \cite[Theorem~3.1.3(5)]{coulembier_inductive_2024}, $\bB[\cC]$ is unentangled. We have $\bB[\cC]=\bB[\cD]$ if there exists a tensor functor $\cC\to\cD$, and we are thus primarily interested in $\bB[\cC]$ for incompressible $\cC$, by \cite[Theorem~A]{Incomp}.

For the exposition in this section, we make the following hypothesis, which allows us to bypass technicalities, and happens to be satisfied for all tensor categories considered in the current paper: 

\begin{hypo}\label{hypo}We assume that $\Indec\bB^d[\cC]$ is finite, for each $d\in\mZ_{>0}$.
\end{hypo}

In \cite[\S 8.1]{coulembier_inductive_2024}, the category $\SPol^d\cC$ of strict polynomial functors of degree $d$ on $\cC$ was introduced. By Theorem~9.1.1 in \cite{coulembier_inductive_2024} and some simplifications due to Hypothesis~\ref{hypo}, we have equivalences
\begin{equation}\label{eqSPol}\SPol^d\cC\;\simeq\;\cC\boxtimes\Fun_k(\bB^d[\cC],\Vecc)\;\simeq\;\Fun_k(\bB^d[\cC],\cC).\end{equation}
The same theorem also demonstrates that, for any $X\in\cC$, a canonical evaluation functor
$$\SPol^d\cC\;\to\; \Rep^d_{\cC}\GL(X)$$
is an equivalence if and only if $\bB^d_X[\cC]=\bB^d[\cC]$.
The objects $X\in\cC$ that satisfy these equivalent conditions are called {\bf relatively $d$-discerning} with respect to $\cC$. Hypothesis~\ref{hypo} is equivalent to the assumption that a relatively $d$-discerning object exists, for every $d$.

 In \cite[\S 8.2]{coulembier_inductive_2024} a category $\Pol^d$ of universal polynomial functors of degree $d$ is defined, and its objects are coherent families of functors defined on all tensor categories over $k$. The primary example is the functor $X\mapsto X^{\otimes d}$. It is proved that this category is equivalent to a category of $k$-linear functors $\TC^d_{k}\to \Vecc$, where $\TC^d=\sum_{\cD}\bB^d[\cD]$, for $\cD$ ranging over all tensor categories over $k$. Evaluation of universal polynomial functors on $\cC$ factors through the Serre quotient $\Fun_k(\bB^d[\cC],\Vecc)$ of (the category of functors $\TC^d\to\Vecc$ equivalent to) $\Pol^d$. For $k$-linear functors $D,E,F,\ldots:\bB^d[\cC]\to\Vecc$, the corresponding endofunctor of $\cC$ will be denoted by $\mathfrak{D},\mathfrak{E},\mathfrak{F},\ldots$, and symbolically we have
\begin{equation}\label{eqmF}\mathfrak{F}(X)\;=\; F(X^{\otimes d}),\end{equation}
for $X^{\otimes d}$ viewed as an $S_d$-representation in $\cC$. We have $\mF(\lambda f)=\lambda^d\mF(f)$, for $\lambda\in k$ and $f$ a morphism in $\cC$. It is also proved in Theorem 9.1.1 of \cite{coulembier_inductive_2024} that the evaluation
\begin{equation}\label{EqPolyGL}\Fun_k(\bB^d[\cC],\Vecc)\;\to\; \Rep^d_{\cC}(\GL(X),\varepsilon),\quad F\mapsto \mF(X)\end{equation}
is an equivalence if and only if $X$ is relatively $d$-discerning. 

{\em Convention:} Henceforth, with slight abuse of terminology, by `polynomial functor' (on $\cC$), we will refer to a $k$-linear functor $F:\bB^d[\cC]\to\Vecc$. This is the simplest interpretation, but still allows us to think of this more intuitively as a strict polynomial functor via equivalence \eqref{eqSPol}, or as a recipe for a polynomial $\GL(X)$-representations for any $X\in\cC$ via \eqref{EqPolyGL}, or as the functor $\mF:\cC\to\cC$ via \eqref{eqmF}. We will for instance say that `$F$ is additive' or `exact', when $\mF$ is additive or exact in the ordinary sense.

For $M\in\Indec\bB^d[\cC]$ denote by $D_M$ the simple object of $\Fun_k(\bB^d[\cC],\Vecc)$ that has projective cover $\Hom_{S_d}(M,-)$. We summarise some of the discussion above in the following lemma.

\begin{lemma}\label{lem:poly_glx}
The simple polynomial functors of degree $d$ on $\cC$ are precisely $D_M$, with $M\in\Indec\bB^d[\cC]$. For any $X\in\cC$ a complete irredundant list of simple polynomial representations of $GL(X)$ of degree $d$ is given by $\mD_M(X)$, for $M\in\Indec\bB^d_X\subset\Indec\bB^d[\cC]$, while $\mD_M(X)=0$ for $M\in \Indec\bB^d[\cC]$ not included in in $\bB^d_X[\cC]$.
\end{lemma}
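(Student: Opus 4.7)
The plan is to reduce both assertions to standard Morita theory for Krull--Schmidt $k$-linear categories, combined with the equivalences \eqref{eqSPol} and \eqref{EqPolyGL} from \cite{coulembier_inductive_2024}.

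First I would fix a progenerator $P = \bigoplus_{M \in \Indec \bB^d[\cC]} M$, which is a finite direct sum by Hypothesis~\ref{hypo}, and the semiperfect ring $R = \End_{S_d}(P)$. Evaluation $F \mapsto F(P)$ identifies $\Fun_k(\bB^d[\cC], \Vecc)$ with the category of right $R$-modules, and the representable $h_M = \Hom_{S_d}(M, -)$ corresponds to the indecomposable projective $e_M R$. Because $k$ is algebraically closed and each $\End_{S_d}(M)$ is local, we have $R/\mathrm{rad}(R) \cong \prod_M k$, so the simple right $R$-modules are the heads $D_M = h_M/\mathrm{rad}\,h_M$, satisfying the orthogonality
\[ D_M(N) \;=\; \begin{cases} k & \text{if } N \cong M, \\ 0 & \text{if } N \not\cong M \end{cases} \]
for every indecomposable $N \in \bB^d[\cC]$. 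Combined with \eqref{EqPolyGL}, this already yields the first assertion of the lemma.

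For the second assertion, recall that $\bB^d_X[\cC]$ is the full pseudo-abelian subcategory of $\bB^d[\cC]$ whose indecomposables are the $S_d$-summands of $X^{\otimes d}$. Restriction along this inclusion gives an exact functor $r : \Fun_k(\bB^d[\cC], \Vecc) \to \Fun_k(\bB^d_X[\cC], \Vecc)$, and the orthogonality above forces $r(D_M) = D_M$ when $M \in \Indec \bB^d_X[\cC]$ and $r(D_M) = 0$ otherwise. Since $X^{\otimes d}$ is a direct sum of indecomposables belonging to $\bB^d_X[\cC]$, the evaluation $F \mapsto \mF(X)$ factors through $r$, so $\mD_M(X) = 0$ whenever $M \notin \Indec \bB^d_X[\cC]$. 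For $M \in \Indec \bB^d_X[\cC]$ the image $\mD_M(X)$ is simple in $\Rep^d_\cC(\GL(X),\varepsilon)$, and the resulting list is both complete and irredundant, once we identify $\Fun_k(\bB^d_X[\cC], \Vecc)$ with $\Rep^d_\cC(\GL(X), \varepsilon)$ via the equivalence \eqref{EqPolyGL} applied to $\bB^d_X[\cC]$ itself.

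The one delicate point is invoking \eqref{EqPolyGL} for the subcategory $\bB^d_X[\cC]$ rather than $\bB^d[\cC]$. This is legitimate because, relative to its own inductive system $\bB^d_X[\cC]$, the object $X$ is tautologically $d$-discerning, so \cite[Theorem~9.1.1]{coulembier_inductive_2024} applies. Beyond checking this compatibility with the framework of \cite{coulembier_inductive_2024}, the argument is routine Morita bookkeeping for semiperfect rings and I do not anticipate a substantive obstacle.
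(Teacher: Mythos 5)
Your proposal is correct and follows essentially the same route as the paper, which presents the lemma as a summary of the preceding discussion: the classification of simples in $\Fun_k(\bB^d[\cC],\Vecc)$ as heads of the representable projectives $\Hom_{S_d}(M,-)$, together with the fact (from \cite[Theorem~9.1.1]{coulembier_inductive_2024}) that evaluation at $X$ factors through restriction to $\bB^d_X[\cC]$ and induces the equivalence \eqref{EqPolyGL} with $\Rep^d_\cC(\GL(X),\varepsilon)$. Your Morita-theoretic bookkeeping and the flagged point about applying the equivalence to $\bB^d_X[\cC]$ itself simply make explicit what the paper leaves implicit.
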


\begin{example}\label{ex:Fr+}
    We have
    $$\mD_{\unit}(X)\;=\;\im(\Gamma^dX\hookrightarrow X^{\otimes d}\tto S^dX).$$ 
    If $d=p^m$, then this is the functor $\Fr^{(m)}_+$ from \cite[\S 4.1]{Tann}, which is isomorphic to $(\Fr_+)^m$ when $\cC$ is Frobenius exact, see \cite[Lemma~5.1]{coulembier_frobenius_2022}, with $\Fr_+:=\Fr_+^{(1)}$. 

    Similarly, if $p>2$, the sign representation, leads to $\Fr_-^{(m)}$ in degree $p^m$.
\end{example}

\begin{remark}\label{rem:DTP}
\begin{enumerate}
    \item We can extend \eqref{EqPolyGL} to an equivalence 
    $$\Fun_k(\bB^d[\cC],\Vecc)\boxtimes \cC\;\xrightarrow{\sim}\;\Rep^d_{\cC}\GL(X),$$
    leading to further equivalences using  \eqref{eqSPol}. Hence we could have taken the convention to define $\Fun_k(\bB^d[\cC],\cC)$ (or yet equivalently $\SPol^d\cC$) as our category of polynomial functors, rather than $\Fun_k(\bB^d[\cC],\Vecc)$. Objects in $\Fun_k(\bB^d[\cC],\Vecc)\boxtimes \cC$ can also be evaluated to functors $\cC\to\cC$, for example $\mD_M\boxtimes X_0$ leads to $X\mapsto \mD_M(X)\otimes X_0$.
    \item 
    The simple objects in $\SPol\cC$ are labeled by the Cartesian product of the set of simple objects in $\cC$ and the set of simple polynomial functors as in Lemma~\ref{lem:poly_glx}.
    \item The category $\Rep^d_{\cC}(\GL(X),\varepsilon)$ is not always a Serre subcategory of $\Rep^d_{\cC}\GL(X)$. Indeed, this is clearly the case if there is a self-extension of $\unit$ in $\cC$.
    We can interpret this also via polynomial functors. Inside our category of polynomial functors $\Fun_k(\bB^d[\cC],\Vecc)$, there is never a self-extension of $\mD_{\unit}$. This follows from the fact that the endomorphism algebra of the projective cover is, by the Yoneda Lemma, given by $\bk=\End_{S_d}(\unit)$. On the other hand, if $E$ is a self-extension of $\unit$ in $\cC$, there is a self-extension $\mD_{\unit}(X)\otimes E$ of $\mD_\unit(X)$ in $\Rep^d_{\cC}\GL(X)$.
\end{enumerate}

\end{remark}

It is proved in Proposition~9.5.3 of \cite{coulembier_inductive_2024} that the braiding homomorphism
\begin{equation}\label{eq:IT}\bk S_d\;\to\; \End_{\GL(X)}(X^{\otimes d})\end{equation}
is an isomorphism if and only if it is injective if and only if $\bB^d\subset\Rep S_d$ contains all projective modules. Indecomposable projective modules can be labelled as the Young modules $Y^\lambda$ for $p$-restricted $\lambda\vdash d$, so the condition is equivalent to $\mD_{Y^\lambda}(X)\not=0$ for such $\lambda$. We call $X$ for which \eqref{eq:IT} is an isomorphism {\bf $d$-faithful}, and any relatively $d$-discerning object is $d$-faithful.

We conclude this section with a short explanation on tensor products and compositions of polynomial functors. For polynomial functors $F:\bB^d[\cC]\to\Vecc$ and $G:\bB^m[\cC]\to\Vecc$, we denote by $F\otimes G:\bB^{d+m}[\cC]\to \Vecc$ the following poynomial functor. It takes $M\in \bB^{d+m}[\cC]$ and restricts it to the Young subgroup $S_d\times S_m<S_{d+m}$, which yields an object in (the additive completion of) $\bB^m[\cC]\otimes \bB^d[\cC]$, by the fact $\bB[\cC]\subset\UnEn$, so that we can apply $F$ and $G$ on both factors. Perhaps more importantly, we can define this polynomial functor via the equivalence \eqref{EqPolyGL}. Indeed, $\Rep_{\cC}(\GL(X),\varepsilon)$ is a tensor category and the polynomial representation form a monoidal subcategory. If $\mF\otimes \mG$ is the endofunctor of $\cC$ corresponding to $F\otimes G$, then $(\mF\otimes \mG)(X)=\mF(X)\otimes \mG(X)$. There is also a polynomial functor
$F\circ G: \bB^{dm}[\cC]\to\Vecc$. Under equivalence \eqref{EqPolyGL}, it corresponds to the $\GL(X)$-representation $\mF(\mG(X))$, {\it i.e.} the composition
$$\GL(X)\to \GL(\mG(X))\to \GL(\mF(\mG(X))),$$ so the
endofunctor of~$\cC$ corresponding to $F\circ G$ is given by the ordinary composition $\mF\circ\mG$.

\subsection{Additive polynomial functors} \label{sec:additive}

\begin{proposition}\label{prop:add}
    The polynomial functors of degree $d$ on $\cC$ that are additive form the Serre subcategory of $\Fun_k(\bB^d[\cC],\Vecc)$ of functors that vanish on all of $\Indec\bB^d[\cC]\backslash\Sds\bB^d[\cC]$.
\end{proposition}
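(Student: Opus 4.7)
My plan is to translate additivity of the endofunctor $\mathfrak{F}$ into a vanishing condition on $F$ at objects induced from proper Young subgroups, and then invoke Proposition~\ref{prop:seeds} to identify that condition with vanishing on non-seed indecomposables of $\bB^d[\cC]$. The Serre property will then be automatic, since the resulting condition is simply ``vanish on a fixed collection of objects''.

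First I would establish the $S_d$-equivariant decomposition in $\cC$,
$$(X\oplus Y)^{\otimes d}\;\cong\;\bigoplus_{i+j=d}\Ind^{S_d}_{S_i\times S_j}\bigl(X^{\otimes i}\boxtimes Y^{\otimes j}\bigr),$$
obtained by grouping the summands of $(X\oplus Y)^{\otimes d}$ by the subset of indices on which $X$ appears and organizing the $S_d$-action via the symmetric braiding. Applying $F$ and isolating the $i\in\{0,d\}$ contributions yields
$$\mathfrak{F}(X\oplus Y)\;\cong\;\mathfrak{F}(X)\oplus\mathfrak{F}(Y)\;\oplus\;\bigoplus_{0<i<d}F\bigl(\Ind^{S_d}_{S_i\times S_j}(X^{\otimes i}\boxtimes Y^{\otimes j})\bigr),$$
so $\mathfrak{F}$ is additive if and only if $F$ vanishes on every $\Ind^{S_d}_{S_i\times S_j}(X^{\otimes i}\boxtimes Y^{\otimes j})$ with $0<i<d$ and $X,Y\in\cC$.

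Then I would prove both implications of the proposition from this criterion. For ``$F$ vanishes on non-seeds $\Rightarrow$ $\mathfrak{F}$ is additive'': since $\bB[\cC]$ is a closed inductive system, each $\Ind^{S_d}_{S_i\times S_j}(X^{\otimes i}\boxtimes Y^{\otimes j})$ lies in $\bB^d[\cC]$, and by Proposition~\ref{prop:seeds}(1) all its indecomposable summands are non-seeds when $0<i<d$, so $F$ annihilates them. Conversely, given a non-seed $M\in\Indec\bB^d[\cC]$, Proposition~\ref{prop:seeds}(1) realizes $M$ as a summand of $\Ind^{S_d}_{S_i\times S_j}(V\boxtimes W)$ for some $V\in\bB^i[\cC]$, $W\in\bB^j[\cC]$ with $0<i<d$. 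Choosing $X,Y\in\cC$ so that $V$ (resp.\ $W$) is a summand of $X^{\otimes i}$ (resp.\ $Y^{\otimes j}$)---possible by the definition $\bB[\cC]=\sum_Z\bB_Z[\cC]$---we conclude that $M$ is a summand of $\Ind^{S_d}_{S_i\times S_j}(X^{\otimes i}\boxtimes Y^{\otimes j})$, whence $F(M)=0$.

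Finally, the full subcategory of $\Fun_k(\bB^d[\cC],\Vecc)$ cut out by vanishing on a prescribed collection of objects of $\bB^d[\cC]$ is closed under subobjects, quotients, and extensions, since evaluation at any fixed object of $\bB^d[\cC]$ is an exact functor to $\Vecc$; this yields the Serre subcategory structure. I do not expect serious obstacles: the only step that genuinely uses the symmetric monoidal structure is the $S_d$-equivariant decomposition of $(X\oplus Y)^{\otimes d}$, which must be checked carefully using the symmetric braiding but is ultimately a standard combinatorial bookkeeping exercise.
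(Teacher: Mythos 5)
Your proposal is correct and follows essentially the same route as the paper: the same $S_d$-equivariant decomposition of $(X\oplus Y)^{\otimes d}$ into induced pieces, the same reduction of additivity to vanishing of $F$ on summands of $\Ind^{S_d}_{S_i\times S_{d-i}}(V\boxtimes W)$, and the same appeal to characterisation (1) of Proposition~\ref{prop:seeds}. You simply spell out both implications and the Serre-subcategory closure in more detail than the paper does.
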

\begin{proof}
    Inside $\cC\boxtimes\Rep_{\bk}S_d$, which can be interpreted as the category of $S_d$-representations in $\cC$, we have, for $X,Y\in\cC$
    $$(X\oplus Y)^{\otimes d}\;\simeq\; X^{\otimes d}\oplus Y^{\otimes d}\oplus \bigoplus_{0<i<d}\Ind^{S_d}_{S_i\times S_{d-i}}(X^{\otimes i}\otimes Y^{\otimes d-i}).$$
    For a $k$-linear functor $F:\bB^d[\cC]\to\Vecc$, the resulting endofunctor $\mF$ of $\cC$ will thus be additive if and only if it vanishes on all (direct summands of) representations $\Ind^{S_d}_{S_i\times S_{d-i}}(V\boxtimes W)$ for $V\in \bB^i[\cC]$ and $W\in \bB^{d-i}[\cC]$. The conclusion follows from characterisation (1) in Proposition~\ref{prop:seeds}.
\end{proof}

\begin{remark}\label{rem:poly_vanishing}
\begin{enumerate}
    \item Due to the behaviour $\mF(\lambda f)=\lambda^d\mF(f)$, non-zero additive polynomial functors can only exist in degrees $d$ that are powers of $p$. This also follows immediately from the combination of Propositions~\ref{prop:seeds} and~\ref{prop:add}.
    \item For a general (not necessarily additive) polynomial functor of degree $d$, we have
    $$\mF(X\oplus Y)\;\cong\; \mF(X)\oplus \mF(Y)\oplus \bigoplus_{0<i<d}\mF_i(X,Y),$$
    where the $\mF_i$ can be interpreted as `polynomial bifunctors' of degree $i,d-i$. For us it will be more important to interpret $\mF(X\oplus Y)$ as a representation over $\GL(X)$, via the restriction $\GL(X)<\GL(X\oplus Y)$. Then $\mF(X)$ is the (maximal) summand of degree $d$, $\mF_i(X,Y)$ the summand of degree $i$ and $\mF(Y)$ the summand of degree $0$. Degree refers to the eigenvalue of the action of $\mathbb{G}_m<\GL(X)$.
    \item The classification of additive polynomial functors in $\Fun(\bB^d[\cC],\cC)$ follows from that in $\Fun(\bB^d[\cC],\Vecc)$. Indeed, a functor in the former functor category is `additive' if and only if its composition with $\omega:\cC\to\Vecc$ is `additive' in the latter category. Here $\omega$ is some faithful exact $k$-linear functor (not necessarily monoidal). Equivalently, the category of additive functors in $\Fun(\bB^d[\cC],\cC)$ is the Deligne tensor product of $\cC$ with the category of additive functors in $\Fun(\bB^d[\cC],\Vecc)$.
    \item Similarly, a functor $F\in\Fun(\bB^d[\cC],\cC)$ is `exact' if and only if $\omega\circ F\in\Fun(\bB^d[\cC],\Vecc)$ is `exact'. Indeed, the evaluation of $F$ to a functor $\cC\to\cC$ is given by the composite
    $$\cC\xrightarrow{-^{\otimes d}}\cC\boxtimes\Rep S_d\xrightarrow{\cC\boxtimes F}\cC\boxtimes\cC\xrightarrow{\otimes}\cC,$$
    where the last functor is exact and faithful, and thus does not influence overall exactness.
    \item By Proposition~\ref{prop:add}, the classification of additive polynomial functors can be read off from $\bB[\cC]$. The same does not seem to be true for exact functors.
\end{enumerate}
    
\end{remark}

The coordinate algebra $k[\GL(X)]$ of $\GL(X)$ is a quotient algebra of $\Sym(X^\ast\otimes X)\otimes \Sym(X\otimes X^\ast)$, such that 
$$M_X:=\Sym(X^\ast\otimes X)=\Sym(\uEnd(X))\subset k[\GL(X)]$$ is a subalgebra and subcoalgebra (but not a Hopf subalgebra). The comodule category of the bialgebra $M_X$ is precisely the category of polynomial representations of $\GL(X)$. Every subobject $M^d_X:=\Sym^d(\uEnd(X))$ is a subcoalgebra, and its comodule category is precisely the category of polynomial $\GL(X)$-representations of degree $d$.

We denote the coproduct on $M_X$, that is inherited from $k[\GL(X)]$ as discussed in the previous paragraph, by $\Delta_{\times}$. It restricts to a coproduct $M_X^d\to M_X^d\otimes M_X^d$. We can also interpret $M_X$, just like any $\Sym(Y)$ for $Y\in\cC$, as a Hopf algebra, corresponding to the additive group scheme of $Y$. We denote the corresponding coproduct on $M_X$ by $\Delta_+$. This one restricts to $M_X^d\to \oplus_{i+j=d}M_X^i\otimes M_X^j$. There is a `distributivity' compatibility between both coproducts. Concretely, $M_X$ represents the `affine algebraic ring' internal to $\cC$ given by the functor from the category of commutative algebras in $\Ind\cC$ to the category of rings
$$\mathsf{CAlg}\cC\;\to\;\mathsf{Ring},\quad A\mapsto \End_A(A\otimes X).$$
Diagrammatically this translates into the following commutative diagram, as well as its `right-hand' version

\[\begin{tikzcd}
	{M_X} && {M_X\otimes M_X} & {M_X\otimes M_X\otimes M_X} \\
	{M_X\otimes M_X} && {M_X\otimes M_X\otimes M_X\otimes M_X}
	\arrow["{\Delta_{\times}}", from=1-1, to=1-3]
	\arrow["{\Delta_+}"', from=1-1, to=2-1]
	\arrow["{\Delta_+\otimes M_X}", from=1-3, to=1-4]
	\arrow["{\Delta_\times\otimes \Delta_\times}", from=2-1, to=2-3]
	\arrow[from=2-3, to=1-4]
\end{tikzcd}\]
where the right upwards arrow is multiplication of the second and fourth factor, ending up in the third factor.

\begin{lemma}
    The subobject $C_X^d\subset M^d_X$ defined as the kernel of the composite
    $$M^d_X\;\xrightarrow{\Delta_+}\;\bigoplus_{0\le i\le d}M^i_X\otimes M^{d-i}_X\;\to\;\bigoplus_{0< i< d}M^i_X\otimes M^{d-i}_X,$$ where the second map is projection onto a direct summand, is a sub-coalgebra of $(M_X^d,\Delta_{\times})$.
\end{lemma}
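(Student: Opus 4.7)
The plan is to chase $\Delta_\times(m)$ for $m\in C_X^d$ through the left and right distributivity diagrams of the excerpt, and conclude that it lies in $C_X^d\otimes C_X^d$. Inside $M_X^d\otimes M_X^d$ this subobject is realised as the intersection $\ker(p\otimes \mathrm{id})\cap \ker(\mathrm{id}\otimes p)$, where $p:M_X^d\to \bigoplus_{0<i<d}M_X^i\otimes M_X^{d-i}$ is the defining map of $C_X^d$; this uses that $-\otimes Y$ is exact in the tensor category $\cC$. As a warm-up I would observe that for $m\in C_X^d$ the defining vanishing together with the counit axiom of $(M_X,\Delta_+)$ (whose counit is the projection to $M_X^0=k$) force
\[\Delta_+(m)\;=\;m\otimes 1+1\otimes m\quad \text{in }\;M_X^d\otimes M_X^0\oplus M_X^0\otimes M_X^d.\]

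The main step is then a direct chase through the left distributivity diagram. The longer path sends $m$ first through $\Delta_+$ to $m\otimes 1+1\otimes m$, then through $\Delta_\times\otimes\Delta_\times$ to $\Delta_\times(m)\otimes(1\otimes 1)+(1\otimes 1)\otimes\Delta_\times(m)$, and finally through the ``multiply the second and fourth factor into the third'' map to
\[\sum_i\bigl(a_i\otimes 1\otimes b_i+1\otimes a_i\otimes b_i\bigr)\;\in\; M_X^{\otimes 3},\]
where $\Delta_\times(m)=\sum_i a_i\otimes b_i$. By commutativity of the diagram this equals $(\Delta_+\otimes\mathrm{id})\Delta_\times(m)$. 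In its first two factors the result lives in $M_X^d\otimes M_X^0\oplus M_X^0\otimes M_X^d$, so it is killed by $(p\otimes\mathrm{id})$, giving $(p\otimes\mathrm{id})\Delta_\times(m)=0$. The analogous chase through the right-hand distributivity diagram yields $(\mathrm{id}\otimes p)\Delta_\times(m)=0$.

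Combining the two vanishings places $\Delta_\times(m)$ in the intersection $C_X^d\otimes C_X^d$, which completes the proof. The main obstacle in executing this plan is purely notational: one must track the degree profile carefully through the fourfold tensor product and through the ``multiply-second-and-fourth-into-third'' map, and confirm that the two distributivity diagrams (left and right) contribute genuinely independent vanishings. Both are routine once one commits to the bookkeeping, and require no new ideas beyond those already set up in the excerpt.
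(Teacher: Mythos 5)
Your proof is correct and is exactly the paper's intended argument: the paper's proof consists of the single sentence that the claim is ``a direct consequence of the distributivity between $\Delta_+$ and $\Delta_\times$,'' and your diagram chase (primitivity of elements of $C_X^d$ for $\Delta_+$, then the two distributivity squares giving $(p\otimes\id)\Delta_\times=0$ and $(\id\otimes p)\Delta_\times=0$, intersected via exactness of $\otimes$) is precisely the details being elided. No issues.
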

\begin{proof}
    This is a direct consequence of the distributivity between $\Delta_+$ and $\Delta_{\times}$.
\end{proof}

\begin{example}
    If $\cC=\Vecc$, then $M_{\unit^n}=k[x_{ij}]$ is a polynomial algebra in $n^2$ variables and the comultiplications are defined by $\Delta_+(x_{ij})=x_{ij}\otimes 1+1\otimes x_{ij}$ and $\Delta_{\times}(x_{ij})=\sum_l x_{il}\otimes x_{lj}$. Hence, for $d>1$
    $$C^d_{\unit^n}\;=\;\begin{cases}
    \Span\{x_{ij}^d\},&\mbox{if $d$ is a power of $\mathrm{char}(k)$},\\
    0,&\mbox{otherwise.}
    \end{cases}$$
\end{example}

The following characterisation of additive polynomial functors might well remain valid for general tensor categories. For now, we focus on a special setting that is powerful enough to include the first cases of interest, namely $\Ver_4^+$ and $\Ver_p$. In general, the methods below only show that additive polynomials of degree $d$ are in bijection with a {\em subset} of $C^d_X$-comodules.

\begin{theorem} \label{thm:cd_comodules}Assume that $\cC$ admits a, not necessarily symmetric, tensor functor to a semisimple, not necessarily symmetric, tensor category.
Let $X\in\cC$ be relatively $d$-discerning.
    Under equivalence~\eqref{EqPolyGL}, the category of additive polynomial functors is identified with the category of comodules in $\cC$ of $C^d_X\subset k[\GL(X)]$.
\end{theorem}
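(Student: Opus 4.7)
The plan is to translate the additivity condition on the polynomial functor $F$ into a coalgebraic condition on the associated $M_X^d$-comodule $V=\mF(X)$. Under the equivalence \eqref{EqPolyGL}, polynomial $\GL(X)$-representations of degree $d$ are identified with $M_X^d$-comodules $\rho\colon V\to V\otimes M_X^d$ in $\cC$, and I will show that $\mF$ is an additive endofunctor of $\cC$ precisely when $\rho$ factors through $V\otimes C_X^d$.

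The key calculation interprets $V$ as a representation of the algebraic ring $\uEnd(X)$: a functorial point $\phi\in\uEnd(X)(A)$ acts on $A\otimes V$ as $\mF(\phi)$. The coproduct $\Delta_+$ is dual to the addition morphism $\uEnd(X)\oplus\uEnd(X)\to\uEnd(X)$, so the composite
\[
(\id_V\otimes\Delta_+)\circ\rho\colon V\longrightarrow V\otimes\bigoplus_{i+j=d}M_X^i\otimes M_X^j
\]
encodes, dually, the action $(\phi_1,\phi_2)\mapsto\mF(\phi_1+\phi_2)$. By Remark~\ref{rem:poly_vanishing}(2), additivity of $\mF$ is equivalent to the vanishing of the mixed bidegree components $\mF_i(X,X)$ for $0<i<d$, which dually is the statement that the composite above has image in the bidegree $(d,0)\oplus(0,d)$ summand $V\otimes((M_X^d\otimes 1)\oplus(1\otimes M_X^d))$. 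By the very definition of $C_X^d$ as the kernel of the projection onto the mixed bidegree summands, this is exactly the condition that $\rho$ factors through $V\otimes C_X^d$. Since $C_X^d$ is a subcoalgebra of $(M_X^d,\Delta_\times)$ by the preceding lemma, the category of $C_X^d$-comodules in $\cC$ forms a full subcategory of $M_X^d$-comodules, which via \eqref{EqPolyGL} is identified with the subcategory of additive polynomial functors.

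The main obstacle I anticipate is promoting this coalgebraic condition from characterizing additivity of the $\uEnd(X)$-action on $V$ to characterizing additivity of $\mF$ on all parallel morphisms in $\cC$. Necessity is immediate from functoriality evaluated at endomorphisms of $X$. For sufficiency, I will use that $X$ is relatively $d$-discerning, so $F$ is determined by $V$; consequently additivity verified at $X$ propagates to additivity of $\mF$ on the whole category via the polarization embodied in Remark~\ref{rem:poly_vanishing}(2). The hypothesis of a (not necessarily symmetric) tensor functor from $\cC$ to a semisimple category enters to ensure that the functorial-points picture of $\uEnd(X)$ is rigorous enough to justify the dualization, and that the bifunctor components $\mF_i(Y,Z)$ are faithfully controlled by their diagonal specialization $\mF_i(X,X)$, so that the coalgebraic condition genuinely captures full additivity of $\mF$ and not merely additivity on $\uEnd(X)$.
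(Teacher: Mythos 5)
Your reduction of additivity to the vanishing of the mixed bidegree components $\mF_i(X,X)$ for $0<i<d$ is fine (it is Proposition~\ref{prop:add} plus Remark~\ref{rem:poly_vanishing}(2), and the passage from the diagonal specialization at a relatively $d$-discerning $X$ to full additivity is handled by Proposition~\ref{prop:seeds} and relative projectivity). The gap is in the next step, where you declare that this vanishing is ``dually'' and ``by the very definition of $C^d_X$'' the same as the coaction $\rho$ factoring through $V\otimes C^d_X$. These are two different conditions. Factoring through $C^d_X$ says that the \emph{matrix-coefficient} map $V\to V\otimes M^i_X\otimes M^{d-i}_X$, i.e.\ the composite $\nabla_i\circ(\text{action on }\mF_i(X,X))\circ\delta_i$ built from the diagonal $\delta_i:V\to\mF_i(X,X)$ and codiagonal $\nabla_i:\mF_i(X,X)\to V$, vanishes; additivity requires the object $\mF_i(X,X)$ itself to vanish. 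The implication ``$\mF_i(X,X)=0\Rightarrow$ matrix coefficient vanishes'' is formal, but the converse is not: a priori the composite through $\mF_i(X,X)$ could die (e.g.\ because $\delta_i$ or $\nabla_i$ fails to detect it) while $\mF_i(X,X)\neq 0$. This is exactly why the paper warns, just before the theorem, that in general the method only exhibits the additive functors as a \emph{subset} of the $C^d_X$-comodules.

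The paper closes this gap by identifying $\mF_i(X,X)$ with a cotensor product $(M^i_X\otimes M^{d-i}_X)\,\Box^{M^d_X}\,V$ and proving that this vanishes if and only if the composite $V\otimes V^\ast\to M^d_X\to M^i_X\otimes M^{d-i}_X$ is zero. The nontrivial direction rests on Lemma~\ref{lem:AN}, whose epimorphism hypothesis is verified via Lemma~\ref{lem:GH}; it is precisely there that both the relatively $d$-discerning assumption and the existence of a tensor functor to a semisimple category are consumed. Your account of where the hypotheses enter (rigor of the functor-of-points formalism, and control of $\mF_i(Y,Z)$ by $\mF_i(X,X)$) points at the easy parts of the argument and misses their actual role. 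To repair the proposal you would need to supply an argument that nonvanishing of $\mF_i(X,X)$ forces nonvanishing of the corresponding matrix coefficient of $V$; without it the inclusion ``$C^d_X$-comodule $\Rightarrow$ additive'' is unproved.
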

\begin{proof}
    By Proposition~\ref{prop:add}, the category of additive polynomial functors is the Serre subcategory of functors that are sent to zero under the restriction
    \begin{eqnarray*}\Fun_k(\bB^d[\cC],\Vecc)&\to&\bigoplus_{0<i<d}\Fun_k(\bB^i[\cC]\otimes_{k}\bB^{d-i}[\cC],\Vecc)\\
    &\simeq& \bigoplus_{0<i<d}\Fun_k(\bB^i[\cC],\Vecc)\boxtimes \Fun_k(\bB^{d-i}[\cC],\Vecc) \end{eqnarray*}
    along
    $$\bB^i[\cC]\otimes_{k}\bB^{d-i}[\cC]\;\xrightarrow{\Ind^{S_d}_{S_{i}\times S_{d-i}}(-\boxtimes -)}\;\bB^d[\cC].$$

For each $i$, equivalence \eqref{EqPolyGL} then induces an exact functor
$$\Rep^d(\GL(X),\varepsilon)\;\to\; \Rep^i(\GL(X),\varepsilon)\boxtimes \Rep^{d-i}(\GL(X),\varepsilon),$$
and our task is to identify the (co)modules that are sent to zero with the ones stated in the theorem. To describe the functor, we omit the $\varepsilon$-restriction.  In terms of the subcoalgebra $M^d_X\subset k[\GL(X)]$, the functor then corresponds to a composite
$$\Comod(M^d_X)\xrightarrow{\sim} \Comod(M^d_{X_1\oplus X_2}) \to \Comod(M^{i}_{X_1}\otimes M^{d-i}_{X_2}),$$
where $X_1,X_2$ are just copies of $X$, labelled differently for clarity. To obtain the precise equivalence we can work with the equivalence \eqref{EqPolyGL}. Indeed, $M^d_X=\mF(X)$ for $\mF=\Sym^d(\uHom(X,-))$, so $M^d_X$ is sent to $B:=\Sym^d(\uHom(X,X_1\oplus X_2))$ in $\Comod(M^d_{X_1\oplus X_2})$, and it follows that
the first functor is the Morita equivalence realised by cotensoring over the bi-comodule $B$ and the second a restriction (corresponding to $GL(X_1)\times \GL(X_2)<\GL(X_1\oplus X_2)$) followed by the projection onto suitable degrees. Overall, the functor is given by
$$\left(\Sym^i(\uHom(X,X_1))\otimes \Sym^{d-i}(\uHom(X,X_2))\right)\;\Box^{M^d_X}\;-\;\,\simeq\;\, \left(M^i_X\otimes M^{d-i}_X\right)\;\Box^{M^d_X}\;-,$$
where the second interpretation is simply forgetting the relabellings $X_1,X_2$ of $X$.
The right $M^d_X$-comodule structure on $M^i_X\otimes M^{d-i}_X$ for the above cotensor is such that the map $M_X^d\to M_X^i\otimes M_X^{d-i}$ (coming from $\Delta_+$) is a morphism of $M_X^d$-comodules, which is again a consequence of distributivity. 

It now suffices to show that for an $M^d_X$-comodule $V$ in $\cC$, the composite
$$V\otimes V^\ast\;\to\; M^d_X\;\to\;M^i_X\otimes M^{d-i}_X$$
is zero if and only if
$$(M^i_X\otimes M^{d-i}_X)\;\Box^{M^d_X}\;V\;=\;0.$$
We can prove instead the dual formulation in terms of algebras. Hence by Lemma~\ref{lem:AN} below, we need to prove that
$$\uHom_{S_d}(X^{\otimes d},\Ind^{S_d}_{S_i\times S_{d-i}}X^{\otimes d})\otimes \uHom_{S_d}(X^{\otimes d},\Ind^{S_d}_{S_i\times S_{d-i}}X^{\otimes d})\to \uHom_{S_d}(X^{\otimes d},\Ind^{S_d}_{S_i\times S_{d-i}}X^{\otimes d})$$
is an epimorphism, where the map is induced from the canonical epimorphism (split in $\cC$)
$$\Ind^{S_d}_{S_i\times S_{d-i}}X^{\otimes d}\;\tto\; X^{\otimes d}$$
and composition.
Now, if $\cC$ has a (not necessarily symmetric) tensor functor to a semisimple tensor category $\cC_1$, it suffices to prove that image of the morphism is an epimorphism in $\cC_1$. In $\cC_1$, the $S_d$-representation $X^{\otimes d}$ can be written as
$\oplus_i V_i\boxtimes L_i,$
where $L_i$ runs over a finite set of simple objects in $\cC_1$ and $V_i$ are $S_d$-representations. It then follows that we need to prove that the morphism in vector spaces of the form
$$\bigoplus_j\left(\Hom_{S_d}(V_j, \Ind^{S_d}_{S_i\times S_{d-i}}V_l)\otimes \Hom_{S_d}(V_i, \Ind^{S_d}_{S_i\times S_{d-i}}V_j)\right)\;\to\;  \Hom_{S_d}(V_i, \Ind^{S_d}_{S_i\times S_{d-i}}V_l),$$
given by $f\otimes g\mapsto f\circ \pi\circ g$ for the canonical projection $\pi:\Ind^{S_d}_{S_i\times S_{d-i}}V_j\tto V_j $,
is an epimorphism for every $i,l$. That it is an epimorphism follows from the fact that $X$ is relatively $d$-discerning and Lemma~\ref{lem:GH} below by picking $W_1=V_i$, $W_2=\oplus_j V_j$ and $W_3=V_l$. 
    \end{proof}

    \begin{lemma}\label{lem:AN}
        Let $A$ be an algebra in a tensor category $\cC$, with $N\in\cC$ a right $A$-module, with action $\rho:N\otimes A\to N$, equipped with an $A$-module morphism $a:N\to A$. Assume that the composite
        $$N\otimes N\xrightarrow{N\otimes a} N\otimes A\xrightarrow{\rho} N$$
        is an epimorphism. Then for any left $A$-module $V\in\cC$, the object
        $N\otimes_A V$ is zero if and only if the composite $N\to A\to\uEnd(V)$ is zero.
    \end{lemma}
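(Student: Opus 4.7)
The plan is to reformulate both conditions in terms of the single morphism $\psi:N\otimes V\to V$ obtained from $\phi:N\to \uEnd(V)$ under the adjunction $-\otimes V\dashv \uHom(V,-)$. Explicitly, writing $\alpha:A\otimes V\to V$ for the left action and $\lambda:A\to\uEnd(V)$ for the corresponding algebra morphism, one has $\phi=\lambda\circ a$ corresponding to $\psi=\alpha\circ(a\otimes V)$; in particular $\phi=0$ if and only if $\psi=0$.

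Next, I would describe $N\otimes_A V$ as the cokernel of $f_1-f_2$, where $f_1=\rho\otimes V$ and $f_2=N\otimes\alpha$ are the two structural maps $N\otimes A\otimes V\rightrightarrows N\otimes V$. The central observation is that $\psi$ coequalizes $f_1$ and $f_2$: using that $a$ is a right $A$-module map (so $a\circ\rho=m\circ(a\otimes A)$ for the multiplication $m$ on $A$) and the left action axiom ($\alpha\circ(m\otimes V)=\alpha\circ(A\otimes\alpha)$), both composites $\psi\circ f_1$ and $\psi\circ f_2$ reduce to $\alpha\circ(a\otimes\alpha):N\otimes A\otimes V\to V$. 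Hence $\psi$ factors through the projection $N\otimes V\twoheadrightarrow N\otimes_A V$, which immediately gives the implication $N\otimes_A V=0\Rightarrow\psi=0\Rightarrow\phi=0$.

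For the converse, assume $\psi=0$ and precompose $f_1-f_2$ with $N\otimes a\otimes V:N\otimes N\otimes V\to N\otimes A\otimes V$. A short calculation yields
\[f_1\circ(N\otimes a\otimes V)\;=\;(\rho\circ(N\otimes a))\otimes V,\qquad f_2\circ(N\otimes a\otimes V)\;=\;N\otimes\psi\;=\;0.\]
The first of these is an epimorphism by the standing hypothesis on $\rho\circ(N\otimes a)$ together with exactness of $-\otimes V$. Therefore $(f_1-f_2)\circ(N\otimes a\otimes V)$ is an epimorphism, which forces $f_1-f_2$ itself to be an epimorphism, i.e.\ $N\otimes_A V=0$.

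I do not anticipate a genuine obstacle here: the whole argument is powered by the single calculation that $\psi$ coequalizes $f_1,f_2$, which is essentially the associativity compatibility between the $A$-action on $V$ and the fact that $a$ is $A$-linear. The most delicate point worth stating carefully is the exactness of $-\otimes V$ (which holds since $\cC$ is rigid, so $V$ has a dual), ensuring that epimorphisms are preserved after tensoring with $V$.
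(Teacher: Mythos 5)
Your proof is correct and follows essentially the same route as the paper's: precompose the coequalizer diagram defining $N\otimes_A V$ with $N\otimes a\otimes V$, note that one of the resulting parallel arrows is an epimorphism by hypothesis while the other vanishes when the composite $N\to A\to\uEnd(V)$ is zero. Your write-up merely makes explicit the details the paper leaves implicit (the adjunction identification $\phi\leftrightarrow\psi$ and the verification that $\psi$ coequalizes the two structure maps, which handles the easy direction).
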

    \begin{proof}
        One direction does not require the epimorphism. For the `if' claim we compose the defining diagram for $N\otimes_AV$ with $N\otimes a\otimes V$, yielding an equalising diagram
        $$N\otimes N\otimes V\rightrightarrows N\otimes V\tto N\otimes_AV.$$
        Now, by assumption one of the two parallel arrows is an epimorphism, while the other is zero when $N\to A\to\uEnd(V)$ composes to zero, leading to $N\otimes_AV=0$.
    \end{proof}

    \begin{lemma}\label{lem:GH}
        Let $G$ be a finite group, with subgroup $H<G$ and $W_1,W_2,W_3\in \Rep_kG$. Assume that every indecomposable direct summand of $\Ind^G_HW_3$ (we omit $\Res^G_H$ from notation) also appears as a summand in $W_2$. Consider the canonical projection $\pi:\Ind^G_HW_2\tto W_2$. Then the morphism
        $$\Hom_G(W_2,\Ind^G_HW_3)\otimes \Hom_G(W_1,\Ind^G_HW_2)\xrightarrow{f\otimes g\mapsto f\circ \pi\circ g}\Hom_G(W_1,\Ind^G_HW_3)$$
        is surjective.
    \end{lemma}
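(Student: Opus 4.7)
The plan is to construct, for any given $h \in \Hom_G(W_1, \Ind^G_H W_3)$, an explicit decomposition $h = \sum_k f_k \circ \pi \circ g_k$ by splitting along the indecomposable summands of $\Ind^G_H W_3$ and using the hypothesis to match each summand with one appearing in $W_2$.

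First I would decompose $\Ind^G_H W_3 = \bigoplus_k Z_k$ into indecomposables, with associated inclusions $\iota_k' \colon Z_k \hookrightarrow \Ind^G_H W_3$ and projections $q_k' \colon \Ind^G_H W_3 \twoheadrightarrow Z_k$, so that $h = \sum_k \iota_k' \circ h_k$ for $h_k := q_k' \circ h \colon W_1 \to Z_k$. By the hypothesis, each $Z_k$ is a direct summand of $W_2$, providing $G$-maps $a_k \colon Z_k \hookrightarrow W_2$ and $b_k \colon W_2 \twoheadrightarrow Z_k$ with $b_k \circ a_k = \id_{Z_k}$. I would then take $f_k := \iota_k' \circ b_k \in \Hom_G(W_2, \Ind^G_H W_3)$, so that the task reduces to producing $g_k \in \Hom_G(W_1, \Ind^G_H W_2)$ satisfying $\pi \circ g_k = a_k \circ h_k$: granted this, $f_k \circ \pi \circ g_k = \iota_k' \circ (b_k a_k) \circ h_k = \iota_k' \circ h_k$, and summing over $k$ recovers $h$.

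The central step is producing $g_k$. I rewrite $a_k \circ h_k = \tilde a_k \circ (\iota_k' \circ h_k)$ where $\tilde a_k := a_k \circ q_k' \colon \Ind^G_H W_3 \to W_2$. Since $\pi$ is the counit of the adjunction $\Ind^G_H \dashv \Res^G_H$, Frobenius reciprocity identifies the $G$-map $\tilde a_k$ with the $H$-linear map $\bar\beta_k \colon W_3 \to W_2$, $\bar\beta_k(w) := \tilde a_k(1 \otimes w)$, and applying the functor $\Ind^G_H$ yields a $G$-map $\Ind^G_H \bar\beta_k \colon \Ind^G_H W_3 \to \Ind^G_H W_2$, $g \otimes w \mapsto g \otimes \bar\beta_k(w)$, satisfying $\pi \circ \Ind^G_H \bar\beta_k = \tilde a_k$ by a one-line computation. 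Setting $g_k := \Ind^G_H \bar\beta_k \circ \iota_k' \circ h_k$ then gives $\pi \circ g_k = \tilde a_k \circ \iota_k' \circ h_k = a_k \circ h_k$, as required.

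The main subtlety, rather than a genuine obstacle, is recognising that $\pi$ is the counit of $\Ind^G_H \dashv \Res^G_H$ (not to be confused with the retraction that appears in the opposite adjunction where $\Ind^G_H$ is right adjoint), so that any $G$-linear map into $W_2$ which factors through an induced module automatically lifts along $\pi$ via functoriality of $\Ind^G_H$. The hypothesis on $W_2$ serves exactly to provide such a factorisation for each indecomposable component $\iota_k' \circ h_k$ of $h$, and no cohomological input is required.
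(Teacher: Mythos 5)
Your proof is correct and follows essentially the same route as the paper's: both use the hypothesis to split $\Ind^G_H W_3$ off (copies of) $W_2$ and then lift the resulting map through $\pi$ using relative $H$-projectivity of the induced module. The only difference is packaging — the paper treats $\Ind^G_H W_3$ as a single direct summand of $W_2^n$ and cites Alperin's Proposition~9.1 for the lift, whereas you work summand-by-summand and derive the lift explicitly from the counit of $\Ind^G_H\dashv\Res^G_H$ and functoriality of induction.
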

    \begin{proof}
        By assumption, there exists $n$ for which there are morphisms $z:\Ind^G_HW_3\to W^n_2$ and $y: W_2^n\to\Ind^G_HW_3$ so that $y\circ z$ is the identity. Now, by relative $H$-projectivity of $\Ind^G_H W_3$ (\cite[Proposition 9.1]{Alperin}) and the fact that $\pi$ splits over $H$, there exists a lift $x:\Ind^G_HW_3\to (\Ind^G_HW_2)^n$ of $z$, meaning $z=\pi^n\circ x$. Hence $y\circ \pi^n\circ x$ is the identity morphism of $\Ind^G_HW_3$. In other words, we can write any $h:W_1\to\Ind^G_H W_3$ in the target as
        $$\sum_i y_i\circ \pi\circ (x_i\circ h),$$
        for $x_1,\ldots, x_n$, resp. $y_1,\ldots, y_n$ the coordinates of $x$ resp. $y$.
    \end{proof}

\subsection{Example: super vector spaces}
Let $k$ be an algebraically closed field of characteristic $p>2$. We apply the concepts and results from the current section to the case $\cC=\Vecc$.

We have $\bB[\sVec]=\SYoung$, as follows from definitions, see \cite[Theorem~4.1.1]{coulembier_inductive_2024}.
Additive superpolynomial functors were classified in \cite[Theorem~3.5]{Giordano}. The classification now also follows from our general framework.
Note that parity shift functors do not appear in our classification, due to our conventions. In order to obtain the precise classification of \cite{Giordano}, one has to apply Remarks~\ref{rem:DTP}(2) and \ref{rem:poly_vanishing}(3), where the odd line in $\cC=\sVec$ corresponds to the parity shift functor.
\begin{corollary}
    The additive polynomial functors on $\sVec$ in degree $p^j$ are given by the direct sums of $\Fr_+^j$ and $\Fr_-^j$.
\end{corollary}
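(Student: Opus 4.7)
The plan is to combine Proposition~\ref{prop:add} with the explicit description of seeds for $\SYoung$ from Example~\ref{ex:seeds}. Since $\bB[\sVec]=\SYoung$, for $d=p^j$ with $j\ge 1$ I would first observe that $\Sds\bB^{d}[\sVec]$ consists of exactly two elements: the trivial representation $\unit$ and the sign representation $\mathrm{sgn}$ of $S_{p^j}$. Then by Proposition~\ref{prop:add}, the category of additive polynomial functors of degree $p^j$ on $\sVec$ is the Serre subcategory of $\Fun_k(\bB^{p^j}[\sVec],\Vecc)$ on functors vanishing on non-seed indecomposables; its simple objects are therefore precisely $D_\unit$ and $D_\mathrm{sgn}$, which by Example~\ref{ex:Fr+} and the Frobenius exactness of $\sVec$ identify with $\Fr_+^j=(\Fr_+)^j$ and $\Fr_-^j=(\Fr_-)^j$ respectively.

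To conclude I need to show this Serre subcategory is semisimple, so that every object is a direct sum of its two simples. I would do this by identifying the subcategory with $\Fun_k(\mathbf{S}/I,\Vecc)$, where $\mathbf{S}\subset\bB^{p^j}[\sVec]$ is the full additive subcategory on $\{\unit,\mathrm{sgn}\}$ and $I$ is the ideal of morphisms in $\mathbf{S}$ that factor through a non-seed indecomposable. In one direction, restriction sends any additive functor vanishing on non-seeds to a functor on $\mathbf{S}$ killing $I$ (since morphisms through zero objects must be zero), hence to one on $\mathbf{S}/I$; in the other direction, extension by zero on non-seeds is a well-defined inverse precisely because $I$ has been quotiented out.

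Finally, since $p>2$, $\unit$ and $\mathrm{sgn}$ are distinct one-dimensional simple $kS_{p^j}$-modules, so $\End_{S_{p^j}}(\unit)=\End_{S_{p^j}}(\mathrm{sgn})=k$ and $\Hom_{S_{p^j}}(\unit,\mathrm{sgn})=\Hom_{S_{p^j}}(\mathrm{sgn},\unit)=0$. Moreover neither $\id_\unit$ nor $\id_\mathrm{sgn}$ can factor through a non-seed indecomposable, as that would force $\unit$ or $\mathrm{sgn}$ to be a summand of that non-seed, contradicting the non-isomorphism of indecomposables. Thus $\mathbf{S}/I$ is the semisimple $k$-linear category with two non-isomorphic simple objects of endomorphism ring $k$, and $\Fun_k(\mathbf{S}/I,\Vecc)\simeq\Vecc\times\Vecc$. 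The main subtlety lies in establishing the identification with $\Fun_k(\mathbf{S}/I,\Vecc)$; once that is in hand, semisimplicity is immediate and yields that every additive polynomial functor of degree $p^j$ on $\sVec$ decomposes as a direct sum of copies of $\Fr_+^j$ and $\Fr_-^j$.
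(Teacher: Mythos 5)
Your proposal is correct and follows the same route as the paper, whose proof is a one-line citation of Examples~\ref{ex:seeds} and~\ref{ex:Fr+} together with Proposition~\ref{prop:add}. The only difference is that you spell out the semisimplicity of the Serre subcategory (via the quotient category $\mathbf{S}/I$ and the observation that $\id_\unit$, $\id_{\mathrm{sgn}}$ cannot factor through non-seed indecomposables), a step the paper treats as immediate; your justification of it is sound.
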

\begin{proof}
    This follows immediately from Examples~\ref{ex:seeds} and~\ref{ex:Fr+} and Proposition~\ref{prop:add}
\end{proof}

It was determined in \cite{CEKO} when a supervector space $\unit^{m|n}=\unit^m\oplus \bar{\unit}^n$ is $d$-faithful.
The answer to this problem, as well as the question of when the space is relatively $d$-discerning, is invariant under $m\leftrightarrow n$. We can thus for convenience assume that $m\ge n$. We also exclude the classical even case, so we assume $m\ge n>0$.
We set first
$$c=c(m,n):=\frac{p+n-m}{2}\;\in\;\mZ[1/2].$$
The main result from \cite{CEKO} then states that $\unit^{m|n}$ is $d$-faithful if and only if we are in one of the following three situations:
$$\begin{cases}
c<2,&\mbox{and}\quad d\le m+(p-1)n, \quad\qquad\mbox{or}\\
2\le c\le n&\mbox{and}\quad d\le \lceil (n+1)p-1-c^2\rceil, \qquad\mbox{or}\\
c>n&\mbox{and}\quad d\le m+n+mn.
\end{cases}$$

The sufficient condition ``$m\ge d\le n$'' for $\unit^{m|n}$ to be relatively $d$-discerning was observed in \cite[Theorem~4.2]{Axtell}. Now we can derive the sufficient and necessary condition:
\begin{proposition}
    A supervector space $\unit^{m|n}$ with $m\ge n>0$ is relatively $d$-discerning in $\sVec$ if and only if we are in one of the following four situations:
$$\begin{cases}
c\le 0,&\mbox{and}\quad d\le (n+1)p-1, \qquad\quad\mbox{or}\\
0< c<2,&\mbox{and}\quad d\le m+(p-1)n,\quad \qquad\mbox{or}\\
2\le c\le n,&\mbox{and}\quad d\le \lceil (n+1)p-1-c^2\rceil, \qquad\mbox{or}\\
c>n,&\mbox{and}\quad d\le m+n+mn.
\end{cases}$$
\end{proposition}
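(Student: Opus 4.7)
My plan is to use Lemma~\ref{lem:poly_glx}: $\unit^{m|n}$ is relatively $d$-discerning precisely when $\mD_M(\unit^{m|n})\ne 0$ for every indecomposable $M\in\bB^d[\sVec]=\SYoung^d$. By Proposition~\ref{prop:seeds} and Example~\ref{ex:seeds}, the seeds of $\SYoung$ in odd characteristic are the trivial and sign representations of $S_{p^j}$, and every indecomposable of $\SYoung^d$ arises as a direct summand of an induced module $\Ind^{S_d}_{S_{p^{j_1}}\times\cdots\times S_{p^{j_k}}}(\epsilon_1\boxtimes\cdots\boxtimes\epsilon_k)$ with $\sum p^{j_i}=d$ and each $\epsilon_i\in\{\unit,\mathrm{sgn}\}$. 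So the task reduces to checking, for each such choice of seeds, whether the corresponding polynomial functor on $\unit^{m|n}$ vanishes.

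The $d$-faithfulness classification of \cite{CEKO} handles the pure-trivial-seed case, i.e.\ when all Young modules appear in $\bB^d_{\unit^{m|n}}$. The first step is to observe that in the three regimes $0<c<2$, $2\le c\le n$, and $c>n$, the claimed $d$-discerning bound coincides with the $d$-faithful bound. Here I would argue seed-by-seed that the failure of $d$-faithfulness is never preceded by the failure of a signed-Young obstruction; the mechanism uses additivity of the $\Fr_\pm^{(j)}$ together with the two identities $\Fr_+^{(j)}(\unit^{m|n})\supseteq m\cdot\Fr_+^{(j)}(\unit)$ and $\Fr_-^{(j)}(\unit^{m|n})=n\cdot\Fr_-^{(j)}(\bar{\unit})$, where $\Fr_-^{(j)}(\unit)=0$ and $\Fr_-^{(j)}(\bar{\unit})\ne 0$ because the sign-isotypic part of $\bar{\unit}^{\otimes p}$ (with $p$ odd) is the entire space.

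The core new content is the case $c\le 0$. Here I would exhibit an explicit obstruction in degree $(n+1)p$: the composite polynomial functor
\[
\mathfrak{F}(X):=S^{n+1}\bigl(\Fr_-^{(1)}(X)\bigr),
\]
which is nonzero as a functor (for instance on $(n+1)\bar{\unit}$) but satisfies $\mathfrak{F}(\unit^{m|n})=0$. Indeed, $\Fr_-^{(1)}(\unit^{m|n})=n\bar{\unit}$ by additivity, and the super-symmetric power $S^{n+1}$ of a purely odd object is the ordinary exterior power of the underlying vector space, giving $S^{n+1}(n\bar{\unit})=\Lambda^{n+1}(k^n)=0$. Since evaluation $D\mapsto\mD(X)$ is exact, every simple composition factor $D_M$ of $\mathfrak{F}$ then vanishes on $\unit^{m|n}$, exhibiting an indecomposable $M\in\SYoung^{(n+1)p}$ missing from $\bB^{(n+1)p}_{\unit^{m|n}}$. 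This rules out relative $d$-discerning for all $d\ge(n+1)p$ when $c\le 0$.

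For the converse in this regime, I would note that $(n+1)p-1<m+(p-1)n$ whenever $m\ge n+p$, so the result of \cite{CEKO} already delivers non-vanishing for every pure-Young indecomposable. The remaining task is to verify non-vanishing for indecomposables of $\SYoung^d$ that involve at least one sign seed; any such module in degree at most $(n+1)p-1$ uses at most $n$ symmetrically-arranged sign seeds at any fixed size $p^j$, so the relevant super-symmetric or super-exterior power of $n\bar{\unit}$ remains nonzero and the corresponding functor survives. The main obstacle, and the part I expect to absorb most of the work, is this last systematic check: ruling out any additional obstruction of $\mathfrak{F}$-type in degrees strictly below $(n+1)p$, so that the bound $(n+1)p-1$ is actually sharp.
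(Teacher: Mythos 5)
Your strategy has the right skeleton, and your explicit obstruction for the genuinely new case $c\le 0$ is a good one: the composite $S^{n+1}\circ\Fr_-^{(1)}$ is a nonzero polynomial functor of degree $(n+1)p$ that kills $\unit^{m|n}$, and by exactness of evaluation this produces an indecomposable of $\SYoung^{(n+1)p}$ outside $\bB^{(n+1)p}_{\unit^{m|n}}$. This is essentially a concrete incarnation of the fact the paper uses, namely that $\mD_{Y^{0\mid p\mu}}(\unit^{0|n})=0$ once $\ell(\mu)>n$, which first happens in degree $p(n+1)$.

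However, the proof as written does not close, and the gap is exactly where you flag it. The positive direction (that \emph{every} indecomposable of $\SYoung^d$ survives when $d$ is within the stated bound) cannot be settled by your ``seed-by-seed'' reasoning: non-vanishing of an induced module $\Ind^{S_d}_{S_{\lambda}}(\epsilon_1\boxtimes\cdots\boxtimes\epsilon_k)$ under evaluation, or of a composite functor built from $\Fr_\pm^{(j)}$, only tells you that \emph{some} composition factor or summand survives, not that a prescribed indecomposable summand does. The paper instead uses two inputs you are missing: (i) Donkin's labelling of the indecomposable signed Young modules as $Y^{\lambda\mid p\mu}$ with $|\lambda|+p|\mu|=d$, combined with the Brundan--Kujawa odd-reflection (or Steinberg tensor product) criterion, which factorizes the vanishing question as $\mD_{Y^{\lambda\mid p\mu}}(\unit^{m|n})\neq 0$ if and only if both $\mD_{Y^{\lambda}}(\unit^{m|n})\neq 0$ and $\mD_{Y^{0\mid p\mu}}(\unit^{0|n})\neq 0$, the latter being the classical condition $\ell(\mu)\le n$; and (ii) \cite[Proposition~2.2.8(i)]{CEKO}, which upgrades $d$-faithfulness (non-vanishing for $p$-restricted $\lambda$ only) to non-vanishing of $\mD_{Y^\lambda}$ for \emph{all} $\lambda\vdash d$ --- your appeal to the $d$-faithfulness classification for ``every pure-Young indecomposable'' silently assumes this. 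With (i) and (ii) in hand, both the sufficiency of the bounds in all four regimes and the coincidence with the $d$-faithful bound for $c>0$ follow at once, and your ``systematic check'' below degree $(n+1)p$ becomes the one-line observation that $\ell(\mu)\le|\mu|\le d/p<n+1$ there. Without them, the argument is a plausible plan rather than a proof.
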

\begin{proof}
    The indecomposable signed Young modules were classified in \cite{Donkin} and are labelled as $Y^{\lambda\mid p\mu}$ for partitions $\lambda,\mu$ such that $|\lambda|+p|\mu|=d$. Here $Y^{\lambda}=Y^{\lambda\mid 0}$ are the Young modules.

    A necessary condition for $\unit^{m|n}$ to be relatively $d$-discerning is that it is $d$-faithful, \textit{i.e.} that $\mD_{Y^\lambda}(\unit^{m|n})$ is non-zero for every $p$-restricted $\lambda\vdash d$. However, in \cite[Proposition~2.2.8(i)]{CEKO}, it was proved that when $\unit^{m|n}$ is $d$-faithful, then it actually follows immediately that $\mD_{Y^\lambda}(\unit^{m|n})$ is non-zero for every $\lambda\vdash d$. It remains to be verified for which $d$-faithful $\unit^{m|n}$ we have $\mD_{Y^{\lambda\mid p\mu}}(\unit^{m|n})$ for non-zero $\mu$. It follows from the procedure in the proof of \cite[Theorem~6.5]{Brundan-Kujawa}, or from the Steinberg tensor product theorem as discussed in \cite[Remark~4.6]{Brundan-Kujawa}, that $\mD_{Y^{\lambda\mid p\mu}}(\unit^{m|n})$ is non-zero precisely when both $\mD_{Y^{\lambda}}(\unit^{m|n})$ and 
    $$\mD_{Y^{0\mid p\mu}}(\unit^{m|n})\;\simeq\; \mD_{Y^{0\mid p\mu}}(\unit^{0|n})$$ are non-zero.
    The latter vanishing is entirely classical and the displayed objects are zero if and only if the length of $\mu$ exceeds $n$. This condition is satisfied for all relevant $\mu$ if and only if $d<p(n+1)$. The conclusion then follows from observing that the latter condition is automatically satisfied for $d$-faithful objects when $c>0$, but not when $c\le 0$.
    \end{proof}

\begin{remark}
 It follows that there are supervector spaces that are $d$-faithful without being $d$-discerning. To give a concrete example, when 
    $$m=n+p\qquad\mbox{and}\qquad d=m+(p-1)n=p(n+1),$$ then $\unit^{m|n}$ is $d$-faithful, but not relatively $d$-discerning.

\end{remark}

\section{Polynomial Functors in $\Ver_4^+$}\label{sec-Ver4}

Let $k$ be an algebraically closed field of characteristic 2.

\subsection{A Frobenius twist}
\label{sec:FrobTwist}

In \cite[\S 4]{CS25}, a notion of Frobenius twist of schemes in tensor categories was introduced. We make this explicit for $\Ver_4^+$. Let $\cC$ first be an arbitrary tensor category over a field of characteristic $p$. For $A\in \CAlg(\cC)$, we define the subalgebra $A^{[1]}\subset A$ as the image of the multiplication morphism $\Gamma^pA\to A$. This leads to a quotient group $G\to G^{[1]}$ for an affine group scheme $G$ (which is for $\cC=\Vecc$ is the image of the usual Frobenius twist $G\to G^{(1)}$) with $k[G^{[1]}]:=k[G]^{[1]}$. 

We return to $\cC=\Ver_4^+$. Below we write $V^{(1)}$ for the usual Frobenius twist (for instance defined via an extension of scalars or as the image of $\Gamma^2V\to  S^2 V$) of a vector space $V$.

\begin{lemma}
    For vector spaces $V=k^m$ and $W=k^n$, set $G=\GL(V\oplus W\otimes P)$. Then 
    $$G^{[1]}\;\simeq\; (\GL(V)\times\GL(W)) \ltimes (V^{(1)}\otimes W^\ast\times W\otimes (V^{(1)})^\ast\times W\otimes W^\ast),$$
    an affine group scheme over $k$. Applied to $A\in\CAlg(\Ver_4^+)$, the group homomorphism $G(A)\to G^{[1]}(A)$ becomes
    $$\left(\begin{array}{ccc}
    F & C& C'  \\
    B' & D&D'\\
    B&E&D+E'
\end{array}\right)\mapsto \left(\begin{array}{ccc}
    \tilde{F} & \tilde{\tilde{C}}& 0  \\
    0 & \tilde{\tilde{D}}&0\\
    \tilde{\tilde{B}}&\tilde{\tilde{E}}&\tilde{\tilde{D}}
\end{array}\right)$$
where $\tilde{\cdot}$ takes a matrix and squares all entries.
\end{lemma}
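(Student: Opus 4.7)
The plan is to identify the Frobenius twist $k[G]^{[1]} \subset k[G]$ with the coordinate Hopf algebra of the claimed semidirect product $H$, and to read off the matrix form of the dual quotient map $G \to G^{[1]}$ from this identification. Throughout I would use three squaring identities in commutative algebras $A$ in $\Ver_4^+$ of characteristic $2$: squares lie in $\ker\delta$ (since $(x^2)' = 2xx' = 0$); elements of $\mathrm{im}\,\delta$ square to zero (since $(y')^2 = 0$); and $(x+y)^2 = x^2 + y^2 + x'y'$ by the $\Ver_4^+$-commutator identity $xy + yx = x'y'$. These already show that $A^{[1]}$ lies in $\ker\delta \cap A$, an ordinary commutative $k$-algebra, so $G^{[1]}$ is a genuine affine group scheme over $k$ as asserted. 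The block form of $\phi(M)$ then follows immediately: primed entries square to zero producing the zero blocks, and the bottom-right entry satisfies $(D_{ab} + E'_{ab})^2 = D_{ab}^2$ since $(E'_{ab})^2 = 0$ and $D_{ab}E'_{ab} + E'_{ab}D_{ab} = D'_{ab}(E'_{ab})' = 0$. The notational distinction between $\tilde{\cdot}$ (squaring once, for the $V$-block entries $F$ already in $\ker\delta$) and $\tilde{\tilde{\cdot}}$ (squaring twice, for the $W\otimes P$-block entries $B, C, D, E$) reflects the different depth of iterated squaring required to land the entries in the $\unit$-isotypic part $A^{[1]}$.

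The core of the argument is to match the Hopf structure. I would restrict the coproducts from Section~\ref{GLX:intro} to the squared generators and compute directly. Write $\Delta F_{ij} = \alpha_1 + \alpha_2 + \alpha_3$ for the three summands in the coproduct formula, and use the characteristic-$2$ identity $(\alpha_1 + \alpha_2 + \alpha_3)^2 = \sum_i \alpha_i^2 + \sum_{i<j} \alpha_i'\alpha_j'$ (which follows from the $\Ver_4^+$-commutator identity applied in $k[G]\otimes k[G]$). One verifies $\alpha_1' = 0$ (both tensor factors in $\ker\delta$), $\alpha_2^2 = \alpha_3^2 = 0$ (each contains a primed tensor factor squaring to zero), and $\alpha_2' = \alpha_3' = \sum_k C'_{ik} \otimes B'_{kj}$ with $(\alpha_2')^2 = 0$ — leaving $\Delta(F_{ij}^2) = \alpha_1^2 = \sum_k F_{ik}^2 \otimes F_{kj}^2$, which is exactly the $\GL(V)$ matrix-multiplication coproduct. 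Analogous iterated-squaring computations for $\Delta D_{ab}, \Delta B_{aj}, \Delta C_{ib}, \Delta E_{ab}$ yield the $\GL(W)$-multiplication on $(D_{ab}^2)^2$ together with the primitive-plus-semidirect-action coproducts on the vector-group factors $V^{(1)} \otimes W^\ast,\, W\otimes(V^{(1)})^\ast,\, W\otimes W^\ast$, matching the Hopf structure of $H$ on the nose.

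The main obstacle is the systematic bookkeeping for the iterated squaring in the coproduct computations for the $W\otimes P$-block generators. After one squaring, primed cross-derivative terms like $D_{ik}D'_{ik}\otimes D_{kj}D'_{kj}$ appear and do not obviously lie in the sub-Hopf-algebra generated by the stated squared generators; a further squaring, or equivalently the observation that only the $\unit$-isotypic components of $k[G]$ contribute to $A^{[1]}$, is needed to eliminate these terms. The underlying principle — squaring kills $\mathrm{im}\,\delta$ and restricts to ordinary Frobenius on $\ker\delta$ — is clean, but its execution over the full nonlinear coproduct structure requires careful accounting of $\Ver_4^+$-commutators. A secondary subtlety is confirming that the subalgebra so generated is precisely $k[G]^{[1]}$, which one verifies by matching the explicit generators of $k[H]$ with those of $k[G]^{[1]}$ and checking algebraic independence modulo the inherited determinantal relation.
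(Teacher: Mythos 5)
Your overall strategy (compute $k[G]^{[1]}$ explicitly and match coproducts) is workable but different from the paper's, which quotes from [CS25] that $-^{[1]}$ commutes with tensor products and with localisation and thereby reduces everything to the single computation $\Sym(P)^{[1]}=k[x^4]$. Your write-up has a genuine gap at exactly the point that computation addresses: you never determine the image of $\Gamma^2A\to A$. Your three squaring identities show that squares of elements of $\ker\delta$ are well behaved, but they do not explain the asymmetry between $\tilde F$ (entries $F_{ij}^2$) and $\tilde{\tilde B}$ (entries $B_{aj}^4$): both $F_{ij}^2$ and $B_{aj}^2$ are squares and both already lie in $\ker\delta$, so ``the depth of iterated squaring required to land in the $\unit$-isotypic part'' cannot be the criterion. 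The actual reason is that $A^{[1]}$ is by definition the image of $(A\otimes A)^{S_2}\to A$ for the $\Ver_4^+$-braiding: $a\otimes a$ is invariant iff $a'\otimes a'=0$, so $F_{ij}\otimes F_{ij}$ is invariant (giving $F_{ij}^2\in A^{[1]}$) while $B_{aj}\otimes B_{aj}$ is not, and only $B_{aj}^2\otimes B_{aj}^2$ is (giving $B_{aj}^4$). One must also check that the symmetrised tensors $a\otimes b+b\otimes a+b'\otimes a'$ contribute nothing (they multiply to $a'b'+a'b'=0$), so that $k[\End(X)]^{[1]}$ is generated by exactly the stated powers; this is the content of $\Sym(P)^{[1]}=k[x^4]$ together with the compatibility of $-^{[1]}$ with tensor products and localisation, which you would still need to prove or cite. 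Without this step the block form of the quotient map, which is the substance of the lemma, is not established.

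A smaller but concrete error: in your computation of $\Delta(F_{ij}^2)$, the claims $\alpha_2^2=0$ and $\alpha_3^2=0$ are false for $n\ge 2$. The individual terms of $\alpha_2$ do square to zero, but the cross terms survive, and one finds $\alpha_2^2=\alpha_3^2=\sum_{k<l}C'_{ik}C'_{il}\otimes B'_{kj}B'_{lj}$; it is only the sum $\alpha_2^2+\alpha_3^2$ that vanishes. The final formula $\Delta(F_{ij}^2)=\sum_kF_{ik}^2\otimes F_{kj}^2$ is nevertheless correct, and your identification of the target Hopf structure (with $\GL(V)$ acting on $V^{(1)}\otimes W^\ast$ through the Frobenius, so that the group law is not matrix multiplication) is the right one.
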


Note that the group law on the right-hand side of the group homomorphism is not matrix multiplication, but dictated by the expression of $G^{[1]}$.

\begin{proof}By a direct computation we have $R^{[1]}=k[x^4]$ for $R= \Sym(P)=k[x,y]/y^2$. By \cite[Lemma~4.4(1)]{CS25}, $R^{[2]}$ for $R$ a tensor product of algebras is just the product of the images of the factors, and by \cite[Lemma~4.4(4)]{CS25}, taking $-^{[1]}$ commutes with localisation. We can thus compute $k[\GL(X)]^{[1]}$ directly from the explicit description in \S \ref{GLX:intro}.
\end{proof}

  If we are interested in simple representations, we can compose $G\to G^{[1]}$ with a projection onto the reductive factor 
$$\Frob:\GL(m+nP)\to \GL(m)\times \GL(n), \quad \left(\begin{array}{ccc}
    F & C& C'  \\
    B' & D&D'\\
    B&E&D+E'
\end{array}\right)\mapsto (\tilde{F},\tilde{\tilde{D}}).$$
For a simple $\GL(m)\times \GL(n)$-representation $L(\lambda)\boxtimes L(\mu)$, we can then associate the simple $\GL(m+nP)$-representation
$$\Frob^\ast(L(\lambda)\boxtimes L(\mu))\;\simeq\; L(2\lambda|\chi^\mu).$$
Indeed, the highest weight can be reduced to the cases $G=\mathbb{G}_m$ and $G=\GL(P)$ and simplicity follows from the inclusion $k[G^{[1]}]\subset k[G]$.

\subsection{Odd reflections in characteristic two} \label{section-reflections}

We use a limited notion of reflecting a highest weight of $\GL(m + nP)$. Let $\mathbf T$ denote the set of simple $\GL(P)$-representations, which is $\mathbb Z$-graded via $\deg: \mathbf T \to \mZ$; these are described in Section \ref{ver4_intro}. Recall that the highest weights of irreducible finite-dimensional $\GL(m + nP)$ representations correspond to pairs $\lambda, \Lambda$ where $\lambda\in\mZ^m$ is a nonincreasing sequence and $\Lambda = (\Lambda_1, \dots, \Lambda_n) \in \mathbf{T}^n$ with $\deg \Lambda_i$ a nonincreasing sequence. We will denote such a pair as $\lambda | \Lambda$ and write the corresponding simple representation as $L(\lambda|\Lambda)$. An arbitrary weight of $\GL(m + nP)$, meanwhile, lies in $\mZ^m \times \mathbf T^n$.

For a partition $\lambda$, we will use the usual notion of length $\ell(\lambda)$. Similarly, for $\Lambda\in\bT^n$ for which $\deg\Lambda$ is a partion, {\it i.e.} $\deg\Lambda_1\ge\cdots \ge \deg\Lambda_n\ge0$, we write $\ell(\bT)\le n$ for the largest number $\ell$ with $\deg\Lambda_{\ell}\not=0$.

\begin{definition}
Let $S = \{x_1, \dots, x_{m + n}\}$ generate $m + nP$, so $$\{x_1, \dots, x_m, x_{m + 1}, x_{m + 1}', \dots, x_{m + n}, x_{m + n}'\}$$ is a basis for $m + nP$. 
Let $\sigma \in S_{m + n}$ be a permutation such that $\sigma$ preserves the ordering of the $\{x_i\mid i\le m\}$ and the $\{x_i\mid i\ge m\}$ respectively, i.e. it is a minimal length representative of an $S_m \times S_n$ coset in $S_{m + n}$.
Denote by $B_\sigma$ the Borel subgroup of upper triangular matrices with respect to the basis $\sigma(S)$.
\end{definition}

In the $1 + P$ case, denote by $B_+(P + 1) = B_{(12)}$, the Borel subgroup of upper triangular matrices with respect to $P + 1$.

\begin{definition}
    For $\lambda|T\in\mZ\times\bT$, denote by $R(\lambda|T)\in\mZ\times\bT$ the highest weight of the $\GL(1+P)$-representation $L(\lambda|T)$ with respect to the Borel $B_+(P + 1)$. That is, $R(\lambda|T)$ is the lowest weight of $L(\lambda|T)$ with respect to the Borel $B_+(1 + P)$.
\end{definition}

Note that because $L(2|\1)$ and $L(0|\chi)$ are characters of $\GL(1 + P)$, if $R(\alpha|T) = \beta|T'$ for $\alpha, \beta \in \mbb{Z}$ and $T, T'$ irreducible $\GL(P)$-representations, then $R(2k + \alpha|\chi^\ell T) = 2k + \beta|\chi^\ell T'$. Hence it suffices to determine $R(\lambda|T)$ for $\lambda = 0, 1$ and $0 \le \deg T < 4$. In \cite{hu_supergroups_2024} 4.2.2, we determined the weight decompositions of $\GL(1 + P)$ representations; so we have
\begin{align}
\label{eq:r_def}
    R(0|\1) = 0|\1, &\quad R(1|\1) = 0|T_1\notag \\
    R(0|\xi) = -1|\xi T_1, &\quad R(1|\xi) = 1|\xi\notag \\
    R(0|T_1) = -2|\xi T_3, &\quad R(1|T_1) = 0|T_2\notag \\
    R(0|\xi T_1) = -1|T_2, &\quad R(1|\xi T_1) = -1|T_3 \\
    R(0|T_2) = -1|\xi T_3, &\quad R(1|T_2) = 0|T_3\notag \\
    R(0|T_3) = -2|\xi T_1, &\quad R(1|T_3) = 0|\chi\xi\notag \\
    R(0|\xi T_3) = -1|\chi, &\quad R(1|\xi T_3) = -1|\chi T_1.\notag
\end{align}

\begin{definition}
Let $R_{ij}:\mbb{Z}^m \times \mathbf T^n \to \mbb{Z}^m \times \mathbf T^n$, for $1\le i\le m$ and $1\le j\le n$, be the function that sends $\lambda|\Lambda \mapsto \lambda' | \Lambda'$ where $\lambda'_i | \Lambda_j' = R(\lambda_i|\Lambda_j)$ and all other components are the same.
\end{definition} 

\begin{proposition}
    Let $L$ be an irreducible $\GL(m + nP)$-representation with highest $B_\sigma$-weight $\lambda|\Lambda$ and $1 \le i \le m$, $1 \le j \le n$ be such that $\sigma(i) - \sigma(m + j) = - 1$. Then $R_{ij}(\lambda|\Lambda)$ is the highest $B_{(\sigma(i), \sigma(m + j))\sigma}$-weight of $L$.
\end{proposition}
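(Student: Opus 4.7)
The strategy is a Levi reduction to the already-understood $\GL(1+P)$ case analyzed in \cite[\S 4.2.2]{hu_supergroups_2024}. Let $H \assign \GL(1+P)$ be the subgroup of $\GL(m+nP)$ corresponding to the two adjacent positions $\sigma(i), \sigma(i)+1$ of $x_i$ and $x_{m+j}$ in the $\sigma(S)$-ordering. Let $Q \supset B_\sigma$ be the minimal parabolic whose Levi decomposition has reductive factor $L_0 = H \times R_0$, with $R_0$ the product of $\GL$-groups for the remaining positions. By construction $Q$ contains both $B_\sigma$ and $B_{\sigma'} \assign B_{(\sigma(i), \sigma(m+j))\sigma}$, and their intersections with $L_0$ coincide on $R_0$ but differ on $H$: $B_\sigma \cap H = B_+(1+P)$, while $B_{\sigma'} \cap H = B_+(P+1)$.

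Let $v \in L$ be the $B_\sigma$-highest weight vector and $v' \in L$ the $B_{\sigma'}$-highest weight vector (unique up to scalar by simplicity of $L$). Both are annihilated by the unipotent radical $U_Q$ of $Q$, so both lie in $L^{U_Q}$, an $L_0$-submodule of $L$. Since the transposition $(\sigma(i),\sigma(m+j))$ acts trivially on the torus characters outside the $H$-factor, $v$ and $v'$ share the same $R_0$-weight, so both lie in a common $R_0$-isotypic component $N \subset L^{U_Q}$, which is naturally an $H$-module. In $N$, $v$ is a $B_+(1+P)$-highest weight vector of $H$-weight $\lambda_i|\Lambda_j$, and $v'$ is a $B_+(P+1)$-highest weight vector; identifying the $H$-weight of $v'$ with $R(\lambda_i|\Lambda_j)$ yields $R_{ij}(\lambda|\Lambda)$ overall.

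The heart of the argument is to show that every $B_+(P+1)$-highest weight vector of the cyclic $H$-submodule $\langle Hv \rangle \subset N$ is automatically a $B_{\sigma'}$-highest weight vector in $L$. This reduces to checking that every positive root vector $E_\beta$ of $B_{\sigma'}$ outside $H$ (coinciding with such root vectors for $B_\sigma$) annihilates $\langle Hv \rangle$. By induction on the length of $h$ in $w = hv$, using the identity $E_\beta(h_0 h_1 v) = h_0 E_\beta(h_1 v) + [E_\beta, h_0](h_1 v)$ with $h_0$ a negative-root generator of $\mathfrak{h}$, it suffices that $[E_\beta, h_0]$ lies in the span of positive root vectors of $B_\sigma$ outside $H$ (which annihilate $v$). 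Simplicity of $L$ then forces $v'$ to be (up to scalar) the unique $B_+(P+1)$-highest weight vector in $\langle Hv\rangle$, and its $H$-weight matches that of the simple head of $\langle Hv\rangle$, namely the simple $H$-module with $B_+(1+P)$-highest weight $\lambda_i|\Lambda_j$, giving $R(\lambda_i|\Lambda_j)$ by definition of $R$.

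The main obstacle is verifying the commutator formula: in $\Ver_4^+$, root spaces are not one-dimensional (pairs like $(B,B')$ or $(C,C')$ form copies of $P$ due to the braiding), so one must carry out the bracket computation with some care using the explicit matrix description of $\mathfrak{gl}(m+nP)$ from \S \ref{GLX:intro}. Once this bookkeeping is in place, the argument proceeds in direct analogy with odd reflections in classical Lie superalgebra theory.
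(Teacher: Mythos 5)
Your proposal is correct and follows essentially the same route as the paper: both pass to the minimal parabolic containing the two Borels, whose Levi has a $\GL(1+P)$ factor in the positions $\sigma(i),\sigma(i)+1$, and then invoke the definition of $R$ as the change of Borel for that rank-one factor. The commutator verification you flag as the "main obstacle" is just the statement that the Levi normalizes the unipotent radical of the parabolic (immediate from the block-matrix description in \S\ref{GLX:intro}), a fact the paper's shorter argument likewise takes for granted by working directly with the flag stabilized by both Borels.
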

\begin{proof}
    Consider the parabolic subgroup $B'$ of upper triangular matrices fixing the partial flag $F_0 \subset F_1 \subset \cdots \subset F_{m + n - 1} = m + nP$ with $F_\ell = \langle \sigma(S)_1, \dots, \sigma(S)_\ell\rangle$ for $\ell < \sigma(i)$ and $F_\ell = \langle \sigma(S)_1, \dots, \sigma(S)_{\ell + 1}\rangle$ otherwise. Using $(\sigma(i), \sigma(i) + 1)\sigma$ instead of $\sigma$ then yields the same flag. Let $\lambda'|\Lambda'$ denote the highest weight of $L$ with respect to $B_{(\sigma(i), \sigma(i) + 1)\sigma}$.
    
    Both $B_\sigma, B_{(\sigma(i), \sigma(i) + 1)\sigma} \subset B'$; the former consists of upper triangular matrices in the $1+P$ block (corresponding to $x_i, Y_j$) while the latter consists of lower triangular matrices in the $1 +P$ block. Hence, the $B'$-highest weight of $L$ is also $\lambda | \Lambda$, except that $\lambda_i | \Lambda_j$ is interpreted as the $\GL(1 + P)$-representation with highest $B_+(1+P)$ weight $\lambda_i | \Lambda_j$. Therefore, $\lambda'|\Lambda'$ has the same components as $\lambda | \Lambda$, except in the $i$th and $m + j$th components, where instead of $\lambda_i | \Lambda_j$ we have $R(\lambda_i | \Lambda_j)$. So $\lambda'|\Lambda' = R_{ij}(\lambda|\Lambda)$.
\end{proof}

This implies that one can compute the highest weight of a $\GL(m + nP)$-representation with respect to any Borel that preserves the order of the $x_i$ and the $Y_i$ simply by repeatedly applying a reflection for weights of $\GL(1 + P)$-representations; in particular, to $\GL(nP + m)$, by first applying $R_{m, 1}, \dots, R_{1, 1}$, then $R_{m, 2}, \dots, R_{1, 2}$, and so on.

\subsection{Simple polynomial functors}
We will classify simple polynomial functors on $\Ver_4^+$, by first using the representation theory of $\GL(m + nP)$ described in \cite{hu_supergroups_2024} to enumerate the simple polynomial representations of $\GL(m + nP)$ for $m\gg $ and then applying Lemma~\ref{lem:poly_glx}. In this section, when we say ``polynomial representation" or ``polynomial functor", we mean a {\em simple} polynomial representation or functor respectively.

We first describe simple polynomial representations explicitly for $\GL(P)$ and $\GL(1 + P)$.
\begin{proposition}\label{prop:GLP}
  The simple polynomial representations of $\operatorname{GL} (P)$ are $T_i$ for $i \geqslant
  1$, $\chi^i$ for $i \geqslant 0$, $\xi \chi^i$ for $i \geqslant 1$, $\xi
  T_{4 i + 1}$ for $i \geqslant 1$, and $\xi T_{4 i + 3}$ for $i \geqslant 0$.
  That is, for $n \geqslant 2$, all irreps of degree $n$ are polynomial. When
  $n = 0, 1$, only $\1$ and $T_1$ are polynomial.
\end{proposition}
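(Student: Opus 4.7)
The plan is to use the characterization from \S\ref{sec:additive} that a simple $\GL(P)$-representation is polynomial if and only if all its matrix coefficients lie in the subcoalgebra $M_P = \Sym(\uEnd(P)) \subset k[\GL(P)]$. The algebra $M_P$ is freely generated inside $k[\GL(P)]$ by the entries $D = a$, $D' = a'$, $E = b$, $E' = b'$ of the defining matrix and in particular does not contain $a^{-1}$. The key technical identity, obtained by applying $\delta$ to $a \cdot a^{-1} = 1$, is
\[
(a^{-1})' = a^{-2}a' \qquad \text{in characteristic }2.
\]

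For the non-polynomial direction, I would expand the character of $\xi$ as
\[
\xi = 1 + (ba^{-1})' = 1 + b'a^{-1} + ba^{-2}a',
\]
which is visibly not in $M_P$, so $\xi$ is not polynomial. For $\xi T_1$, each matrix coefficient is $\xi$ times one of the entries $a, a', b, a+b'$ of the defining matrix of $T_1$; for instance $\xi \cdot a = a + b' + ba^{-1}a'$. Since none of these entries is divisible by $a^2$, the $a^{-1}$ and $a^{-2}$ factors from $\xi$ cannot be fully absorbed, so $\xi T_1$ is not polynomial.

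For the polynomial direction, I would first verify the generating cases. The representations $T_r = S^r P$ for $r \in \{1, 2, 3\}$ are immediate quotients of $P^{\otimes r}$, hence polynomial. The one-dimensional $\chi$ has character $a^4 \in M_P$. Using the centrality of $\ker\delta$ in $k[\GL(P)]$ to commute $a^2$ past the other generators, the character of $\xi\chi$ simplifies to $\xi \cdot a^4 = a^4 + a^3 b' + a^2 a' b \in M_P$. The critical case is $\xi T_3$: working with the basis $\{p_1^3,\, p_1^2 p_2\}$ of $S^3 P$, where $p_2 = p_1'$ and the braided commutativity forces $p_2^2 = 0$ and $p_1 p_2 = p_2 p_1$, I would compute the four matrix coefficients of $T_3$ as $a^3$, $a^2 a'$, $a^2 a' + a^2 b + aa' b'$, and $a^3 + a^2 b'$, and then verify term by term that $\xi$ times each one lies in $M_P$, using only $(a')^2 = 0 = (b')^2$. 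Every remaining entry in the proposition's list is a tensor product of these generating representations: $\chi^i = \chi^{\otimes i}$ for $i \geq 0$, $\xi\chi^i = (\xi\chi) \otimes \chi^{\otimes(i-1)}$ for $i \geq 1$, $T_{4q+r} = T_r \otimes \chi^{\otimes q}$ for $r \in \{1,2,3\}$, $\xi T_{4i+1} = (\xi\chi^i) \otimes T_1$ for $i \geq 1$, and $\xi T_{4i+3} = (\xi T_3) \otimes \chi^{\otimes i}$ for $i \geq 0$; these are polynomial because polynomial representations are closed under tensor products.

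The main obstacle is the explicit matrix computation for $\xi T_3$. The underlying conceptual point, explaining why the excluded simples are exactly $\xi$ and $\xi T_1$, is a degree threshold: the summands $b'a^{-1}$ and $ba^{-2}a'$ appearing in the expansion of $\xi$ are absorbed precisely when the matrix coefficients have enough $a$-divisibility. This already happens for $\chi$ (with character $a^4$) and, by the specific combinatorics of $S^3 P$, for $T_3$, but it fails for the trivial representation and for $T_1$.
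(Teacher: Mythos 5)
Your route is genuinely different from the paper's. The paper's proof works directly with the definition of polynomial representations as subquotients of $P^{\otimes n}$ and simply cites the composition series of tensor products of simples computed in \cite{hu_supergroups_2024}; it is a two-line verification against known data. You instead use the characterisation from \S\ref{sec:additive} that a representation is polynomial iff its matrix coefficients land in $M_P=\Sym(\uEnd(P))\subset k[\GL(P)]$, and carry out the computations from scratch. Your positive direction is correct and complete: the identity $(a^{-1})'=a^{-2}a'$ (using centrality of $a'=\delta a\in\ker\delta$), the expansion $\xi=1+b'a^{-1}+ba^{-2}a'$, the matrix of $T_3$ with entries $a^3$, $a^2a'$, $a^2a'+a^2b+aa'b'$, $a^3+a^2b'$, and the check that $\xi\chi$ and $\xi T_3$ have coefficients in $\Sym^4$ resp.\ $\Sym^3$ all verify, and the reduction of the remaining cases to tensor products of $T_1,T_2,T_3,\chi,\xi\chi,\xi T_3$ is legitimate since polynomial representations form a monoidal subcategory. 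What your approach buys is an explicit, self-contained argument; what it costs is that the exclusions require proving an element of the localisation $k[\GL(P)]=k[a,a',b,b'][\det^{-1}]$ is \emph{not} in the polynomial subring, which is where your writeup has a gap.

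Concretely, ``visibly not in $M_P$'' for $\xi$ and ``the $a^{-1}$ and $a^{-2}$ factors cannot be fully absorbed'' for $\xi T_1$ are not proofs: elements of a localisation have many representatives, and indeed in $\xi\cdot a=a+b'+ba^{-1}a'$ part of the denominator \emph{is} absorbed, so ``divisibility by $a^2$'' of the entries is not by itself a valid criterion. You need an actual obstruction. The cheapest one is the $\mathbb{G}_m$-grading: right translation by $\mathbb{G}_m\subset\GL(P)$ makes $k[\GL(P)]$ graded with $a,a',b,b'$ in degree $1$ and $\det^{-1}$ in degree $-2$, matrix coefficients of a simple of degree $d$ are homogeneous of degree $d$, and $(M_P)_0=k$, $(M_P)_1=\mathrm{span}(a,a',b,b')$; since $\xi\neq 1$ this rules out $\xi$, and for $\xi T_1$ one checks $\xi a\notin\mathrm{span}(a,a',b,b')$ by multiplying by $a$ and comparing inside the polynomial ring ($a(ba^{-1}a')=a'b$ is not of the form $\alpha a^2+\beta aa'+\gamma ab+\delta ab'$). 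Even more simply: a simple polynomial representation of $\mathbb{G}_m$-degree $d$ is a subquotient of $P^{\otimes d}\otimes Y$, and for $d=0,1$ the only such simples are $\1$ and $T_1=P$, which excludes $\xi$ (degree $0$) and $\xi T_1$ (degree $1$) at once. With either repair your argument is complete.
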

\begin{proof}
    We interpret the polynomial representations as those that are subquotients of $P^{\otimes n}$. We have $T_1 = P$. The composition series of tensor products of the simple representations of $\GL(P)$ are described in \cite{hu_supergroups_2024}; from these, we can verify that the listed representations are the only ones that are subquotients of $P^{\otimes n}$.
\end{proof}

\begin{proposition}\label{prop:GL1P}
  The simple polynomial representations of $\operatorname{GL} (1 + P)$ are $L (n| T), n
  \geqslant 2$ when $T \neq \xi, \xi T_1$; $L (1| T)$ is polynomial when $T =
  T_n, \xi \chi^n, \chi^n$; and $L (0| T)$ is when $T = \chi^n$.
\end{proposition}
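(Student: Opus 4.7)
The plan is to combine the classification of simple $\GL(P)$-representations from Proposition~\ref{prop:GLP} with the odd reflection machinery of \S\ref{section-reflections}, in the spirit of the supergroup arguments in \cite{Brundan-Kujawa}. The core claim I will establish is a criterion: $L(\lambda|T)$ is polynomial if and only if both its $B_e$-highest weight $\lambda|T$ and its $B_{(1,2)} = B_+(P+1)$-highest weight $R(\lambda|T) = \lambda'|T'$ are ``polynomial weights'', meaning both $\lambda,\lambda' \ge 0$ and both $T,T'$ are simple polynomial representations of $\GL(P)$ (i.e.\ neither equals $\xi$ nor $\xi T_1$). Since $\GL(1+P)$ has only these two Borels compatible with the $S_m\times S_n$-ordering, this is a finite check.

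For the necessity direction, I would argue that any polynomial $\GL(1+P)$-representation is a subquotient of $(1+P)^{\otimes d}\otimes Y$ for some $Y\in\Ver_4^+$, and its $B_\sigma$-highest weight appears as an honest weight of the module. A direct analysis of the weight decomposition of $(1+P)^{\otimes d}$ under the diagonal torus $\mathbb{G}_m\times T_{\GL(P)}$ shows that only polynomial weights occur in either of the two relevant Borels; this rules out every simple not on the list. For the sufficiency direction, I would use that the category of polynomial representations is a monoidal Serre subcategory of $\Rep(\GL(1+P),\varepsilon)$ containing the standard representation $L(1|\1)=1+P$, and then realize each listed $L(\lambda|T)$ as a composition factor of an appropriate $(1+P)^{\otimes d}$ using the tensor product decompositions of $\GL(1+P)$-representations from \cite{hu_supergroups_2024}. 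The case $L(0|\chi^n)$ reduces to the $\GL(P)$-fact $\chi^n \subset P^{\otimes 4n}$ extended trivially (using that $P \subset 1+P$ as an object of $\Ver_4^+$); the $L(n|\1)$ cases sit inside $S^n(1+P)$; the remaining cases are obtained from these by suitable tensor products and subquotients.

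Applying the criterion is then a bookkeeping exercise using the explicit table \eqref{eq:r_def} together with the shift rule $R(2k+\alpha|\chi^\ell T) = 2k+\beta|\chi^\ell T'$. For $n\ge 2$, the first component of $R(n|T)$ equals $n+\beta$, and inspection of \eqref{eq:r_def} shows $\beta\ge -2$, so $n+\beta\ge 0$ automatically; polynomiality thus fails precisely when either $T$ or $T'$ is non-polynomial as a $\GL(P)$-rep, and a scan of \eqref{eq:r_def} shows this occurs exactly for $T\in\{\xi,\xi T_1\}$. For $n=1$, a case-by-case check yields that $R(1|T)$ has nonnegative first component and polynomial second component exactly when $T=T_n$, $\chi^n$, or $\xi\chi^n$. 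For $n=0$, only $T=\chi^n$ survives, since $R(0|T)$ has a strictly negative first component in every other case in \eqref{eq:r_def}.

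The main obstacle will be the sufficiency half of the criterion: showing that every candidate on the list actually embeds into a tensor power of $1+P$. The necessity half is essentially a weight-space argument, and the final bookkeeping step is mechanical. I expect the sufficiency to be handled cleanly by a downward induction on the reflection distance combined with the Steinberg-type tensor factorization $L(2\lambda|\chi^\mu T) \cong \Frob^*(L(\lambda)\boxtimes L(\mu))\otimes L(0|T)$ that is available through the Frobenius twist of \S\ref{sec:FrobTwist}, reducing to the small-degree cases which can be verified by hand.
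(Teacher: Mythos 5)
Your overall skeleton matches the paper's: exclusion is a weight argument (any weight of a polynomial representation, restricted to $\mathbb{G}_m\times\GL(P)$, must have nonnegative degree and a polynomial $\GL(P)$-part, which kills everything off the list), and inclusion is an explicit realization of each listed simple as a subquotient of tensor powers of the standard object using the tensor product tables of \cite{hu_supergroups_2024}. Your repackaging of the exclusion step via the two Borel-highest weights $\lambda|T$ and $R(\lambda|T)$ is a reasonable systematization of the paper's appeal to ``weight space decompositions,'' and the necessity half of your criterion is sound. Two concrete problems remain.

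First, your sufficiency step leans on a ``Steinberg-type tensor factorization $L(2\lambda|\chi^\mu T)\cong\Frob^\ast(L(\lambda)\boxtimes L(\mu))\otimes L(0|T)$'' which you claim is ``available through the Frobenius twist of \S\ref{sec:FrobTwist}.'' It is not: that section only identifies $\Frob^\ast(L(\lambda)\boxtimes L(\mu))\simeq L(2\lambda|\chi^\mu)$ as a simple object, and the paper explicitly remarks (after Proposition~\ref{prop:SimpFun}) that the Steinberg tensor product theorem is \emph{not} known to hold in this setting and deliberately avoids it. The gap is repairable, since for polynomiality you only need subquotient statements, not isomorphisms: the Cartan-component argument used in Lemma~\ref{Lem:IsPol} shows $L(\lambda|TT')$ occurs in $L(\lambda|T)\otimes L(0|T')$, and together with $L(n|\1)\subset S^n(1+P)$ and $L(0|\chi)\subset S^4(1+P)$ this realizes every candidate — but as written your inclusion argument rests on an unproven theorem. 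Second, your ``scan of \eqref{eq:r_def}'' does not literally give the answer you claim: the printed entry $R(0|T_3)=-2|\xi T_1$ is internally inconsistent (total degree $3$ on the left, $-1$ on the right; it should read $-2|\xi\chi T_1$), and taken at face value it would make your criterion reject $L(2k|T_3)$, contradicting the proposition. You should correct that entry before running the bookkeeping; with the degree-consistent value your case analysis does reproduce the stated list.
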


\begin{proof}
  Using the weight space decompositions in \cite{hu_supergroups_2024}, we see that any other
  representations cannot be polynomial -- they either have weights of negative
  degree or ones with $\xi, \xi T_1$, with the latter prohibited by Proposition~\ref{prop:GLP}. By using the tensor product table, we
  determine that the listed representations all lie in subquotients of $L (1,
  T_1)$.
\end{proof}

\begin{theorem}\label{thm:classif rep}
  When $m > d$, the polynomial representations of degree $d$ of $\operatorname{GL} (m
  + nP)$ are L$(\lambda | \chi^{\mu})$ where $\lambda, \mu$ are partitions such that $|\lambda| + 4|\mu| = d$ and the length of $\mu$ is at most $n$.
\end{theorem}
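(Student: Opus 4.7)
The plan is to prove both containments in the claimed classification separately.

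\emph{Necessity.} Suppose $L(\lambda|\Lambda)$ is polynomial of degree $d$ with $m>d$; I claim $\Lambda=(\chi^{\mu_1},\ldots,\chi^{\mu_n})$ for a partition $\mu$. From $d<m$ and $|\lambda|+\sum_j\deg\Lambda_j=d$, together with $\lambda$ being a partition, we get $\lambda_m=0$. Polynomial representations of $\GL(m+nP)$ restrict to polynomial representations of the diagonal torus $\mathbb G_m^m\subset\GL(m)\hookrightarrow\GL(m+nP)$, so every weight has nonnegative $\mathbb Z^m$-coordinates. The reflection $R_{m,1}$ produces a new highest weight with respect to the shuffled Borel $B_{(m,m+1)}$, which is itself a weight of $L(\lambda|\Lambda)$; its $m$-th coordinate equals the first entry of $R(0|\Lambda_1)$. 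Consulting \eqref{eq:r_def} together with the rule $R(2k+\alpha|\chi^\ell T)=(2k+\beta)|\chi^\ell T'$ when $R(\alpha|T)=\beta|T'$, and the list of polynomial $\GL(P)$-representations in Proposition~\ref{prop:GLP}, only $\Lambda_1=\chi^{\mu_1}$ yields a nonnegative first entry (cases $T_j$ give $-1$ or $-2$; cases $\xi\chi^i,\xi T_{4i+1},\xi T_{4i+3}$ all give $-1$). Since $R(0|\chi^{\mu_1})=0|\chi^{\mu_1}$, the $m$-th coordinate remains $0$, so iterating with $R_{m,2},\ldots,R_{m,n}$ forces $\Lambda_j=\chi^{\mu_j}$ for each $j$. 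The monotonicity $\deg\Lambda_1\ge\cdots\ge\deg\Lambda_n$ of the highest-weight labelling then gives $\mu_1\ge\cdots\ge\mu_n\ge 0$, so $\mu$ is a partition.

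\emph{Sufficiency.} For each $L(\lambda|\chi^\mu)$ satisfying the constraints, I would exhibit it as a composition factor of a polynomial tensor product. On the one hand, $L(\lambda|\unit)$ for $\ell(\lambda)\le m$ arises as the highest-weight composition factor of $\bigotimes_i\Sym^{\lambda_i}(m+nP)$, and so is polynomial of degree $|\lambda|$. On the other hand, via the Frobenius quotient $\Frob\colon\GL(m+nP)\to\GL(m)\times\GL(n)$ from Section~\ref{sec:FrobTwist}, the pullback $\Frob^\ast(\unit\boxtimes L(\mu))=L(0|\chi^\mu)$ is polynomial: the natural representations of the reductive factors pull back to polynomial subquotients of $\Sym^2(m+nP)$ and of a summand of $\Sym^4(m+nP)$ respectively, so $\Frob^\ast$ preserves polynomiality. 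A highest-weight comparison then identifies $L(\lambda|\chi^\mu)$ as the top composition factor of the polynomial tensor product $L(\lambda|\unit)\otimes L(0|\chi^\mu)$, which is polynomial of the correct degree $|\lambda|+4|\mu|$.

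The hard part will be the sufficiency direction: confirming in $\Ver_4^+$ that $L(\lambda|\unit)$ is the top composition factor of $\bigotimes_i\Sym^{\lambda_i}(m+nP)$ (symmetric powers in characteristic $2$ are not semisimple and their head must be isolated by an explicit weight computation), and that $\Frob^\ast$ really sends polynomial $\GL(m)\times\GL(n)$-representations to polynomial $\GL(m+nP)$-representations. A potentially cleaner alternative is a cardinality argument: match the size of the label set $\{(\lambda,\mu):|\lambda|+4|\mu|=d,\ \ell(\mu)\le n\}$ against $|\Indec\bB^d_{m+nP}[\Ver_4^+]|$ via Lemma~\ref{lem:poly_glx}, so that the necessity direction upgrades directly to equality without constructing each representative.
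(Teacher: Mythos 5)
Your proof follows essentially the same route as the paper's: necessity via restriction to the even torus plus iterated odd reflections in the $m$-th row (the paper instead uses $\lambda_m=0$ to restrict to $\mathbb G_m^{m-1}\times\GL(1+nP)$ and inducts on $n$, but the underlying reflection computation from \eqref{eq:r_def} is the same), and sufficiency by realising $L(\lambda|\unit)$ and $L(0|\chi^\mu)$ separately inside tensor powers of $X$ and extracting the highest-weight constituent of their tensor product (the paper uses exterior powers of $X$ and of $L(0|\chi)\subset S^4X$ rather than symmetric powers and $\Frob^\ast$, but these are interchangeable). Two caveats. First, your necessity argument does not actually exclude $\Lambda_j=\chi^{\mu_j}$ with $\mu_j<0$: one has $R(0|\chi^{\mu_j})=0|\chi^{\mu_j}$ with nonnegative first entry regardless of the sign of $\mu_j$, and the monotonicity of the $\deg\Lambda_j$ gives only $\mu_1\ge\cdots\ge\mu_n$, not $\mu_n\ge0$; you need the extra step of the paper's Lemma~\ref{Lem:notneg} (restrict to $\mathbb G_m^m\times\GL(n)$, or note that $\chi^{\mu_j}$ with $\mu_j<0$ is not a polynomial $\GL(P)$-representation by Proposition~\ref{prop:GLP}). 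Second, the proposed cardinality shortcut is circular: the labelling of $\Indec\bB^d[\Ver_4^+]$ in Theorem~\ref{thm:classif:fun} is \emph{deduced} from the present theorem via Lemma~\ref{lem:poly_glx}, so the size of that set is not available independently.
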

The theorem will be proved in a number of lemmas.

\begin{lemma}\label{lem:w2_irrep}
    Let $X = V \oplus W \otimes P$ with $V = m \1$, $W = n \1$. Then for $j \ge 1$, $W^{(j + 1)}$ is the irreducible representation of $\GL(X)$ with highest weight $0|\chi^{2^{j - 1}}$.
\end{lemma}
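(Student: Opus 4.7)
The plan is to reduce the statement to the Frobenius twist formula $\Frob^\ast(L(\lambda)\boxtimes L(\mu))\simeq L(2\lambda|\chi^\mu)$ established in \S\ref{sec:FrobTwist}, with $\lambda=0$ and $\mu=(2^{j-1},0,\ldots,0)$. With these choices, the $\GL(n)$-irrep $L(\mu)$ is the classical $(j-1)$-fold Frobenius twist $W^{(j-1)}$ of the defining representation $W=k^n$, and $\chi^\mu=(\chi^{2^{j-1}},\1,\ldots,\1)$, so the formula yields
$$\Frob^\ast(\1\boxtimes W^{(j-1)})\;\simeq\; L(0|\chi^{(2^{j-1})}).$$

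To conclude, I would identify the left-hand side with the $\GL(X)$-representation $W^{(j+1)}$. The morphism $\Frob$ sends the $D$-block of an element of $\GL(X)$ to $\tilde{\tilde D}$, raising its entries to the fourth power. Composing the $\GL(n)$-action on $W^{(j-1)}$ (in which $D$-entries act via their $2^{j-1}$-th power) with $\Frob$ therefore produces an action of $\GL(X)$ in which $D$-entries appear to the $4\cdot 2^{j-1}=2^{j+1}$-th power, coinciding with the natural $\GL(X)$-action on the classical $(j+1)$-fold Frobenius twist $W^{(j+1)}$. This reflects the general fact, recalled in \S\ref{sec:FrobTwist}, that $R^{[1]}=k[x^4]$ for $R=\Sym(P)$, i.e.\ the Frobenius twist in $\Ver_4^+$ corresponds to two ordinary Frobenius twists in $\Vecc$.

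Combining the two steps gives the desired isomorphism, and simplicity of $W^{(j+1)}$ as a $\GL(X)$-representation follows since it is the pullback of a simple $\GL(m)\times\GL(n)$-representation along $\Frob$. The main bookkeeping challenge is tracking how the exponent shift ``$+2$'' in the Frobenius twist notation is precisely accounted for by the double squaring built into $\Frob$; once this is made clear, the lemma reduces to a direct application of the Frobenius twist formula.
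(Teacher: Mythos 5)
Your proof is correct, and it is precisely the route the paper itself gestures at in the single closing sentence of its proof (``Alternatively, we can apply the Frobenius twist \dots to realise $L(0|\chi^{2^{j-1}})$ as $\Frob^\ast W^{(j-1)}\simeq W^{(j+1)}$''); you have filled in the bookkeeping that sentence leaves implicit. The paper's primary argument is instead a direct computation: it picks a basis $y_1,\dots,y_m,x_1,\dots,x_n,x_1',\dots,x_n'$ of $X$, observes that $\operatorname{span}(y_i^{2^{j+1}})$ and $\operatorname{span}(y_i^{2^{j+1}},x_l^{2^{j+1}})$ inside the symmetric algebra are $\GL(X)$-stable, and reads off that the quotient carries the action $(F,B,C,D,E)\mapsto D^{(j+1)}$, whence irreducibility and the highest weight. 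Your approach buys a cleaner conceptual explanation (in particular, factoring through the group homomorphism $\Frob$ is the tidiest way to see that ``$D$ acts by $2^{j+1}$-st powers'' is well defined and multiplicative, and simplicity comes for free from $k[G^{[1]}]\subset k[G]$), and your exponent count $4\cdot 2^{j-1}=2^{j+1}$ together with Steinberg's theorem identifying $L((2^{j-1},0,\dots,0))$ with $W^{(j-1)}$ is exactly right. What the paper's computation buys, and what you should still note if you want the lemma in the form it is later used (Lemma~\ref{Lem:IsPol} cites it to realise $L(0|\chi)$ as a subquotient of $S^4(X)$, hence as a polynomial representation), is the \emph{concrete} realisation of $W^{(j+1)}$ as the subquotient $\operatorname{span}(y_i^{2^{j+1}},x_l^{2^{j+1}})/\operatorname{span}(y_i^{2^{j+1}})$ of $S^{2^{j+1}}(X)$; your argument identifies the abstract isomorphism type but does not by itself exhibit this embedding.
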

\begin{proof}

    Let $y_1, \dots, y_m, x_1, \dots, x_n, x_1', \dots, x_n'$ be a basis of $X$ with $\delta x_i = x'_i$. Then $$\operatorname{span} (y_i^{2^{j + 1}}) = V^{(j + 1)}, \quad \operatorname{span} (y_i^{2^{j + 1}}, x_j^{2^{j + 1}}) =
  V^{(j + 1)} \oplus W^{(j + 1)}$$ are both closed under the $\operatorname{GL} (m + nP)$ action, so $W^{(j + 1)}$ is also a $\GL(X)$-representation. The $\GL(X)$-action is given by
  \begin{equation*}
      \begin{pmatrix} F & C & C' \\ B' & D & D' \\ B & E & D + E' \end{pmatrix} \mapsto D^{(j + 1)}.
  \end{equation*}
  Hence, $W^{(j + 1)}$ is irreducible and has highest weight $0|\chi^{2^{j - 1}}$. Alternatively, we can apply the Frobenius twist at the end of \S \ref{sec:FrobTwist}, to realise $L(0|(\chi^{2^{j-1}}))$ as $\Frob^\ast W^{(j-1)}\simeq W^{(j+1)}$.
\end{proof}
\begin{lemma}\label{Lem:IsPol}
  The representations $L (\lambda | \chi^{\mu})$ from Theorem~\ref{thm:classif rep} are polynomial.
\end{lemma}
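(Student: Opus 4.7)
My plan is to realise $L(\lambda|\chi^\mu)$ as a composition factor of $X^{\otimes d}$ by independently constructing $L(\lambda|0)$ inside $X^{\otimes |\lambda|}$ and $L(0|\chi^\mu)$ inside $X^{\otimes 4|\mu|}$, and then extracting the highest-weight constituent of the tensor product $L(\lambda|0) \otimes L(0|\chi^\mu)$, which lives in $X^{\otimes d}$.

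For the first piece, since $\lambda$ is a partition with $|\lambda| \le d < m$, classical Schur--Weyl theory for $\GL(V) = \GL(m)$ produces $L(\lambda)$ as a composition factor of $V^{\otimes |\lambda|}$ (for instance, as the head of the Weyl module $\Delta(\lambda)$). Extending this action trivially over the $W \otimes P$ summand of $X$ identifies it with the $\GL(X)$-module $L(\lambda|0)$, which is therefore a composition factor of $V^{\otimes |\lambda|} \subset X^{\otimes |\lambda|}$.

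For the second piece, I would work through the Frobenius twist $\Frob \colon \GL(X) \to \GL(m) \times \GL(n)$ of Section~\ref{sec:FrobTwist}. Lemma~\ref{lem:w2_irrep} identifies $W^{(2)} = L(0|\chi)$ as a subquotient of $X^{\otimes 4}$, and the explicit matrix description of $\Frob$ shows that the $\GL(X)$-action on $(W^{(2)})^{\otimes |\mu|}$ is precisely the Frobenius pullback of the natural $\GL(W) = \GL(n)$-action on $W^{\otimes |\mu|}$. Since $\ell(\mu) \le n$, classical Schur--Weyl theory places $L(\mu)$ as a composition factor of $W^{\otimes |\mu|}$; pulling back via $\Frob$ and invoking the identification $\Frob^*(\1 \boxtimes L(\mu)) \simeq L(0|\chi^\mu)$ from Section~\ref{sec:FrobTwist} realises $L(0|\chi^\mu)$ as a composition factor of $(W^{(2)})^{\otimes |\mu|}$, and hence as a subquotient of $X^{\otimes 4|\mu|}$.

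Finally, the tensor product of the highest weight vectors of the two pieces is a highest weight vector of $L(\lambda|0) \otimes L(0|\chi^\mu)$ of weight $\lambda|\chi^\mu$, occurring with multiplicity one; the $\GL(X)$-submodule it generates has $L(\lambda|\chi^\mu)$ as its unique simple quotient. Thus $L(\lambda|\chi^\mu)$ is a composition factor of $L(\lambda|0) \otimes L(0|\chi^\mu)$, itself a subquotient of $X^{\otimes d}$, proving it is polynomial of degree $d$. The main subtlety is in the second step, namely confirming that tensor powers of $W^{(2)}$ genuinely carry the Frobenius pullback $\GL(X)$-action so that the classical Schur decomposition transports cleanly; this should be read off directly from the matrix formula for $\Frob$ in Section~\ref{sec:FrobTwist}.
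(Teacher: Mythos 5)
Your second and third steps are sound and close in spirit to the paper's argument: the paper also reduces the $\chi^{\mu}$-part to Lemma~\ref{lem:w2_irrep} (it realises $L(0|\chi)$ inside $S^4(X)$ and then takes exterior powers of $L(0|\chi)$, where you take tensor powers and pull back classical Schur--Weyl through $\Frob$ --- both work, since $\Frob^{\ast}$ is exact and sends simples to simples), and the final extraction of $L(\lambda|\chi^{\mu})$ from $L(\lambda|0)\otimes L(0|\chi^{\mu})$ is exactly what the paper does.

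The first step, however, has a genuine gap. The summand $V=m\1$ is \emph{not} a $\GL(X)$-subrepresentation of $X$: by the explicit formula after \eqref{eq:matrix}, a point of $\GL(X)$ sends $e_i$ to $\sum_{\ell}F_{\ell i}\otimes e_{\ell}+\sum_{\ell}(B_{\ell i}\otimes f_{\ell})'$, so the image of $e_i$ lands partly in $W\otimes P$. Hence $V^{\otimes|\lambda|}$ is not a $\GL(X)$-submodule of $X^{\otimes|\lambda|}$. There is also no group homomorphism $\GL(X)\to\GL(m)$ along which to ``extend the action trivially'': the coproduct $\Delta(F_{ij})=\sum_k F_{ik}\otimes F_{kj}+\sum_k(C_{ik}\otimes B'_{kj}+C'_{ik}\otimes B_{kj})$ shows the $F$-block is not multiplicative (it only becomes so after the Frobenius twist of \S\ref{sec:FrobTwist}, which would produce $L(2\lambda|\1)$, not $L(\lambda|\1)$). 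Consistently with this, $L(\lambda|\1)$ is genuinely larger than an inflated $L(\lambda)$ --- e.g.\ $L((1)|\1)$ is the natural module $X$ itself, whose weights include $0|T_1$ by \eqref{eq:r_def}. The correct route, which the paper takes, is the usual highest-weight construction applied to the \emph{whole} object: $L(\lambda|\1)$ is a subquotient of a tensor product of exterior powers $\bigwedge^{i}X$ of $X$ (not of $V$), and hence of $X^{\otimes|\lambda|}$. With that substitution your argument goes through.
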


\begin{proof}
  Let $X = m + nP$. Then $L (\lambda | \1)$ is polynomial, as,
  via the usual construction for $\operatorname{GL} (m)$-representations, it is a
  subquotient of $\left( \bigwedge^m X \right)^{\otimes \lambda_m} \otimes
  \left( \bigwedge^{m - 1} X \right)^{\otimes \lambda_m - \lambda_{m - 1}}
  \otimes \cdots$, thus a subquotient of a tensor power of $X$.
  
  Let $L_{\chi} = L (0 | \chi)$. Then $L (0
  | \chi^{\mu})$ is a subquotient of $$\left( \bigwedge^n L_{\chi}
  \right)^{\otimes \mu_n} \otimes \left( \bigwedge^{n - 1} L_{\chi}
  \right)^{\otimes \mu_n - \mu_{n - 1}} \otimes \cdots $$ so it suffices to
  show that $L_{\chi}$ is polynomial. But it is a subquotient of $S^4 (X)$, as described in \ref{lem:w2_irrep}.

  Then $L (\lambda | \chi^{\mu})$ is a subquotient of $L (\lambda
  | \1) \otimes L (0 | \chi^{\mu})$, so it is also
  polynomial.
\end{proof}

\begin{lemma}\label{Lem:notneg}
  $L (\lambda | \chi^{\mu})$ is not polynomial if $\lambda_i < 0$ or
  $\mu_i < 0$ for some $i$.
\end{lemma}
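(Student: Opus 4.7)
The plan is to argue by contradiction: suppose $L(\lambda|\chi^\mu)$ is polynomial of some degree $d$, so that by definition it occurs as a subquotient of $X^{\otimes d}\otimes Y$ for some $Y\in\Ver_4^+$, with $X=m\unit\oplus nP$. The strategy is to restrict to two carefully chosen kinds of subgroups of $\GL(X)$ and read off constraints on the highest weight. For the claim that $\lambda_i\ge 0$, I would restrict to the copy of $\mathbb{G}_m\subset\GL(X)$ acting by scaling only the $i$-th copy of $\unit$ and fixing the other summands. As a representation of this $\mathbb{G}_m$, the object $X$ splits as a weight-$1$ line (the $i$-th copy of $\unit$) plus a weight-$0$ summand, so $X^{\otimes d}$ has all its $\mathbb{G}_m$-weights in $\{0,1,\dots,d\}$. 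Since $Y$ carries the trivial $\mathbb{G}_m$-action, the same is true of $X^{\otimes d}\otimes Y$ and any of its subquotients; but the highest weight of $L(\lambda|\chi^\mu)$ restricts to the weight $\lambda_i$ on this $\mathbb{G}_m$, forcing $\lambda_i\ge 0$.

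For the claim $\mu_j\ge 0$ the idea is symmetric. I would restrict to the copy of $\GL(P)\subset\GL(X)$ acting on the $j$-th copy of $P$ and fixing the other summands. As a $\GL(P)$-representation, $X$ is a direct sum of a trivial summand with one copy of the defining representation $P$, so every simple $\GL(P)$-subquotient of $X^{\otimes d}\otimes Y$ appears as a subquotient of some tensor power $P^{\otimes k}$, that is, is a polynomial representation of $\GL(P)$. Restricting the highest weight isotypic of $L(\lambda|\chi^\mu)$ for the torus $\mathbb{G}_m^{m}\times\GL(P)^n$ to this $\GL(P)$-factor produces $\chi^{\mu_j}$ as a nonzero $\GL(P)$-subrepresentation, using that the underlying objects of the remaining characters $\chi^{\mu_i}$ are nonzero in $\cC$. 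Hence $\chi^{\mu_j}$ is a polynomial $\GL(P)$-representation, and Proposition~\ref{prop:GLP} then yields $\mu_j\ge 0$.

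The main obstacle I anticipate is a small bookkeeping point: one must verify that the highest weight isotypic of $L(\lambda|\chi^\mu)$ genuinely carries the claimed $\chi^{\mu_j}$-part after restriction, rather than collapsing to something smaller that could fail to detect non-polynomiality. This reduces to the observation that the external tensor product of characters $\chi^{\mu_1}\boxtimes\cdots\boxtimes\chi^{\mu_n}$ has nonzero underlying object in $\cC$, so its restriction to any single $\GL(P)_j$-factor retains a nonzero multiple of $\chi^{\mu_j}$ as a summand. Once this is dispatched, both halves of the lemma follow from the same template: restrict, observe that only nonnegative data appears in $X^{\otimes d}\otimes Y$, and read off the contradiction.
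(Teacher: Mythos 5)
Your proof is correct and follows essentially the same route as the paper's: restrict $L$ to a subgroup of the Levi factor and observe that the highest weight exhibits a non-polynomial weight (a negative $\mathbb{G}_m$-weight for $\lambda_i<0$, resp.\ a negative power of $\chi$ for $\mu_j<0$, excluded by Proposition~\ref{prop:GLP}). The only cosmetic difference is that the paper restricts to $\GL(m)\times\mathbb{G}_m^n$ and $\mathbb{G}_m^m\times\GL(n)$ rather than to individual $\mathbb{G}_m$- and $\GL(P)$-factors.
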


\begin{proof}
  If $\lambda_i < 0$, restrict $L$ to a representation of $\operatorname{GL} (m)
  \times \mathbb{G}_m^n$; if $\mu_i < 0$, restrict $L$ to a representation of
  $\mathbb{G}_m^m \times \operatorname{GL} (n)$. Then $L$ has some non-polynomial
  $\operatorname{GL} (m)$ (resp. $\operatorname{GL} (n)$) weight, so it is not polynomial.
\end{proof}

\begin{lemma}
  The $\operatorname{GL} (1 + nP)$-irrep $L \assign L (0 | \Lambda_1, \ldots, \Lambda_n)$
  is polynomial if and only if each $L_i$ is a non-negative power of $\chi$. 
\end{lemma}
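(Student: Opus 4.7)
The ``if'' direction is immediate from Lemma~\ref{Lem:IsPol}: when each $\Lambda_i=\chi^{\ell_i}$ with $\ell_i\ge 0$, the dominance condition $\deg\Lambda_1\ge\cdots\ge\deg\Lambda_n$ translates to $\ell_1\ge\cdots\ge\ell_n\ge 0$, so $\mu=(\ell_1,\ldots,\ell_n)$ is a partition of length at most $n$ and $L=L(0|\chi^\mu)$ is polynomial by Lemma~\ref{Lem:IsPol}.

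For the ``only if'' direction, the plan is to use the odd reflections $R_{1j}$ of Section~\ref{section-reflections} to generate weights of $L$ that would violate polynomiality unless each $\Lambda_j$ is a non-negative power of $\chi$. The underlying observation is that if $L$ is polynomial---hence a subquotient of some $X^{\otimes d}$ with $X=1+nP$---then every weight of $L$ with respect to the Levi $\mathbb{G}_m\times\GL(P)^n$ must be polynomial in the sense that its $\mathbb{G}_m$-coordinate is non-negative and each $\GL(P)$-component is a polynomial $\GL(P)$-rep (Proposition~\ref{prop:GLP}). This applies in particular to the highest weight with respect to any $B_\sigma$.

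I will apply $R_{11},R_{12},\ldots,R_{1n}$ in sequence, starting from $\sigma=\on{id}$. A direct check of the permutations shows that after $k$ reflections $x_1$ has moved to position $k+1$, so the condition $\sigma_k(1)-\sigma_k(m+k+1)=-1$ required to apply $R_{1,k+1}$ continues to hold. I then proceed by induction on $j$. Assuming I have already shown $\Lambda_i=\chi^{\ell_i}$ for $i<j$, the identity $R(0|\chi^\ell)=0|\chi^\ell$ from \eqref{eq:r_def} (together with the shift property) guarantees that after $R_{11},\ldots,R_{1,j-1}$ the highest weight is $0|\chi^{\ell_1},\ldots,\chi^{\ell_{j-1}},\Lambda_j,\ldots,\Lambda_n$. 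Applying $R_{1j}$ only modifies the first coordinate and the $j$-th $\GL(P)$-component, producing a weight of $L$ whose first coordinate is $\beta_j$, where $R(0|\Lambda_j)=\beta_j|T'_j$. Polynomiality of $L$ forces $\beta_j\ge 0$.

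To conclude that $\Lambda_j$ is a non-negative power of $\chi$, I write an arbitrary polynomial $\GL(P)$-representation as $\chi^qT_0$ with $T_0\in\{\1,\xi,T_1,\xi T_1,T_2,T_3,\xi T_3\}$ and $q\ge 0$, and apply the shift $R(0|\chi^qT_0)=\beta|\chi^qT'_0$ with $R(0|T_0)=\beta|T'_0$. A line-by-line inspection of \eqref{eq:r_def} shows that $\beta\ge 0$ only for $T_0=\1$; every other case yields $\beta\in\{-1,-2\}$. Hence $\Lambda_j=\chi^{\ell_j}$, and $\ell_j\ge 0$ follows from $\Lambda_j$ already being polynomial. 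The main effort, though routine, is the inductive bookkeeping: verifying both that the chain of Borels behaves as claimed and that the hypothesis $\Lambda_i=\chi^{\ell_i}$ for $i<j$ really does keep the first coordinate equal to $0$ through the first $j-1$ steps, so that the constraint at step $j$ reads off cleanly from the table.
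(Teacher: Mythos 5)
Your proof is correct and takes essentially the same route as the paper's: both arguments reduce to the reflection table \eqref{eq:r_def}, the invariance $R(0|\chi^\ell)=0|\chi^\ell$, and the observation that a polynomial representation cannot have a highest weight with negative $\mathbb{G}_m$-coordinate with respect to any Borel $B_\sigma$. The only difference is organizational --- the paper inducts on $n$ by restricting to $\GL(1+(n-1)P)$ and then reflects past the first $n-1$ components at once to isolate $\Lambda_n$, whereas you run a single induction on the component index $j$ --- but both hinge on the same check that $R(0|T_0)$ has non-negative first coordinate only for $T_0=\1$.
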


\begin{proof}
  We induct on $n$. As the base case, suppose $n = 1$ and $L_1 \neq
  \chi^{\ell}$. We know the $\operatorname{GL} (1 + P)$-irrep of highest weight $(0,
  L_1)$ is then not polynomial (it contains a weight space $(- 1| T)$, as described in \eqref{eq:r_def}). Furthermore, if $L_1 =
  \chi^{\ell}$ for $\ell<0$, it is not polynomial by Lemma~\ref{Lem:notneg}. The conclusion thus follows from Lemma~\ref{Lem:IsPol}
  
  Now suppose we know the statement is true for $\operatorname{GL} (1 + (n - 1) P)$. Embed $\GL(1 + (n-1)P)$ in $\GL(1 + nP)$ via $M \mapsto \left(\begin{array}{cc}
    M & 0\\
    0 & I_P
  \end{array}\right)$ with $M \in \operatorname{GL} (1 + (n - 1) P)$,  $\operatorname{Res}_{\operatorname{GL} (1
  + (n - 1) P)}^{\operatorname{GL} (1 + nP)} L$ contains a $\operatorname{GL} (1 + (n - 1)
  P)$-irrep of highest weight $0|\Lambda_1, \ldots, \Lambda_{n - 1}$, so we know that
  $\Lambda_1, \ldots, \Lambda_{n - 1}$ are powers of $\chi$. So it remains to show that
  $L_n = \chi^{\ell}$. Choosing a different Borel, consider $L$ as a
  representation of $\operatorname{GL} ((n - 1) P + 1 + P)$. The highest weight of $L$ with respect to this Borel can be determined by applying $R_{1, 1}, \dots, R_{1, n - 1}$; because each $\Lambda_i$ is a power of $\chi$ and $R(0|\chi^\ell) = 0 | \chi^\ell$, the highest weight of
  $L$ is then still $0 | \Lambda$. However, then we may restrict $L$ to
  $\operatorname{GL} (1 + P) \subset \operatorname{GL} ((n - 1) P + 1 + P)$ to see that
  $\operatorname{Res}_{\operatorname{GL} (1 + P)}^{\operatorname{GL} (1 + nP)} L$ contains the
  $\operatorname{GL} (1 + P)$ irrep with highest weight $0|\Lambda_n$, which is not
  polynomial unless $L_n$ is a power of $\chi$.
\end{proof}

\begin{lemma}\label{Lem:NotPol2}
  If for $\Lambda_1, \ldots, \Lambda_n$ in $\bT$, there exists $1\le i\le n$ for which $\Lambda_i$ is not a non-negative power of $\chi$, then $L \assign L (\lambda
  | \Lambda_1, \ldots, \Lambda_n)$ is not polynomial for any $\lambda$.
\end{lemma}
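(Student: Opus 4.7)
The plan is to reduce to the preceding lemma by a parabolic restriction. The key input is that for a polynomial $\GL(m+nP)$-representation of degree $d$ in the stable range $m>d$ (which is the context of Theorem~\ref{thm:classif rep}), the highest weight satisfies $\lambda_m=0$: the central $\mathbb{G}_m$ acts with weight $d$, so $d=\sum_i\lambda_i+\sum_j\deg\Lambda_j$ with all summands non-negative; combined with $\lambda$ being non-increasing and non-negative, $m>d$ forces $\lambda_m=0$. (This hypothesis is essential, as $L(2|T_1)$ is for instance polynomial for $\GL(1+P)$ by Proposition~\ref{prop:GL1P}.)

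Assuming for contradiction that $L\assign L(\lambda|\Lambda)$ is polynomial and that some $\Lambda_j$ is not a non-negative power of $\chi$, I will pass to the parabolic $Q\subset \GL(m+nP)$ containing the standard upper-triangular Borel whose Levi is $L_0\assign\GL(m-1)\times \GL(1+nP)$---here $\GL(m-1)$ acts on the first $m-1$ trivial lines $y_1,\ldots,y_{m-1}$ and $\GL(1+nP)$ acts on $y_m$ together with the $n$ copies of $P$. The standard parabolic-restriction/highest-weight yoga identifies the unique simple top constituent of $\Res^{\GL(m+nP)}_{L_0}L$ (with respect to the induced Borel on $L_0$) as
$$L_{\GL(m-1)}(\lambda_1,\ldots,\lambda_{m-1})\boxtimes L_{\GL(1+nP)}(\lambda_m|\Lambda)\;=\;L_{\GL(m-1)}(\lambda_1,\ldots,\lambda_{m-1})\boxtimes L_{\GL(1+nP)}(0|\Lambda),$$
where the equality uses $\lambda_m=0$.

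Composition factors of polynomial representations are polynomial, and for a product group polynomial-ness decomposes as a tensor product of polynomial representations of each factor, so the second tensor factor $L_{\GL(1+nP)}(0|\Lambda)$ would be polynomial. Invoking the preceding lemma then forces every $\Lambda_i$ to be a non-negative power of $\chi$, contradicting the hypothesis. The step that warrants most care is the identification of the top composition factor in the parabolic restriction; although this is a routine consequence of the parabolic induction/restriction adjunction together with weight analysis, it should be spelled out carefully given the $\Ver_4^+$-specific highest-weight conventions and odd-reflection phenomena employed earlier in \S\ref{section-reflections}.
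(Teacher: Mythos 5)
Your proposal is correct and follows essentially the same route as the paper: use $m>d$ to force $\lambda_m=0$, restrict to a Levi containing $\GL(1+nP)$ (the paper uses $\GL_1^{m-1}\times\GL(1+nP)$ rather than $\GL(m-1)\times\GL(1+nP)$, an immaterial difference), identify the constituent with $\GL(1+nP)$-highest weight $0|\Lambda$, and invoke the preceding lemma. The paper's proof is just a terser version of the same argument.
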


\begin{proof}
  Since $m > d$, we must have $\lambda_m = 0$. Then
  $\operatorname{Res}_{\operatorname{GL}_1^{m - 1} \times \operatorname{GL} (1 + nP)}^{\operatorname{GL} (m +
  nP)} L$ contains the $\operatorname{GL}_1^{m - 1} \times \operatorname{GL} (1 + nP)$ irrep
  with highest weight $0 | \Lambda_1, \ldots, \Lambda_n$, so by the previous
  lemma, the $\Lambda_i$ are all powers of $\chi$.
\end{proof}

\begin{proof}[Proof of Theorem~\ref{thm:classif rep}]
    One direction of the claim follows from Lemma~\ref{Lem:IsPol}, the other from Lemmas~\ref{Lem:notneg} and~\ref{Lem:NotPol2}.
\end{proof}

\begin{theorem}\label{thm:classif:fun}
    We can label the indecomposable modules in $\bB^d[\Ver_4^+]$ as $Y^{\lambda|\mu}$, for all pairs of partitions $(\lambda, \mu)$ with $|\lambda| + 4|\mu| = d$, such that
    $$\mD_{Y^{\lambda|\mu}}(X)\;\simeq\; L(\lambda|\chi^\mu)$$
    as $\GL(X)$-representations, for all $X=m+nP\in\Ver_4^+$ with $m,n$ sufficiently large ($X$ relatively $d$-discerning).
\end{theorem}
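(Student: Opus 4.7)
The plan is to combine Lemma~\ref{lem:poly_glx}, which indexes the simple polynomial functors of degree $d$ on $\Ver_4^+$ by $\Indec\bB^d[\Ver_4^+]$, with Theorem~\ref{thm:classif rep}, which lists the simple polynomial $\GL(m+nP)$-representations of degree $d$ (for $m>d$) as $\{L(\lambda|\chi^\mu):|\lambda|+4|\mu|=d,\ \ell(\mu)\le n\}$. These are bridged by the evaluation equivalence~\eqref{EqPolyGL}, which applies precisely when $X$ is relatively $d$-discerning. Let $N$ denote the (finite) number of pairs of partitions $(\lambda,\mu)$ with $|\lambda|+4|\mu|=d$.

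First I would verify that $X=m+nP$ is relatively $d$-discerning whenever $m>d$ and $n\ge d/4$. By Theorem~\ref{thm:classif rep} and Lemma~\ref{lem:poly_glx}, for such $X$ we have $|\Indec\bB^d_X[\Ver_4^+]|=N$, since the length condition $\ell(\mu)\le n$ is automatic from $\ell(\mu)\le|\mu|\le d/4\le n$. For an arbitrary $X'\in\Ver_4^+$, the inclusion $X'\hookrightarrow X'\oplus X$ gives $\bB^d_{X'}[\Ver_4^+]\subseteq\bB^d_{X'\oplus X}[\Ver_4^+]$, so applying the same argument to $X'\oplus X$ (which satisfies both size bounds) yields $|\Indec\bB^d_{X'}[\Ver_4^+]|\le N$ for every $X'$. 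The directed union $\Indec\bB^d[\Ver_4^+]=\bigcup_{X'}\Indec\bB^d_{X'}[\Ver_4^+]$ therefore has at most $N$ elements, so $|\Indec\bB^d[\Ver_4^+]|=N=|\Indec\bB^d_X[\Ver_4^+]|$, forcing $\bB^d_X[\Ver_4^+]=\bB^d[\Ver_4^+]$. As a byproduct this confirms Hypothesis~\ref{hypo} for $\Ver_4^+$.

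Next, for a fixed such $X_0=m_0+n_0P$, the evaluation equivalence~\eqref{EqPolyGL} lets me define $Y^{\lambda|\mu}\in\Indec\bB^d[\Ver_4^+]$ to be the unique indecomposable with $\mD_{Y^{\lambda|\mu}}(X_0)\simeq L(\lambda|\chi^\mu)$. This produces the desired labelling of $\Indec\bB^d[\Ver_4^+]$ by pairs of partitions $(\lambda,\mu)$ with $|\lambda|+4|\mu|=d$.

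Finally, I would promote the isomorphism to any other relatively $d$-discerning $X=m+nP$. Consider the block inclusion $\GL(X)\times\GL(X_0)\hookrightarrow\GL(X\oplus X_0)$; since $X\oplus X_0$ is also relatively $d$-discerning, $\mD_{Y^{\lambda|\mu}}(X\oplus X_0)\simeq L(\nu|\chi^\pi)$ for some pair $(\nu,\pi)$. By Remark~\ref{rem:poly_vanishing}(2), $\mD_{Y^{\lambda|\mu}}(X_0)$ and $\mD_{Y^{\lambda|\mu}}(X)$ arise as the top degree-$d$ summands, in the $\mathbb{G}_m$-gradings of $\GL(X_0)$ and $\GL(X)$ respectively, of the restrictions of $L(\nu|\chi^\pi)$ along the two blocks. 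Matching highest weights on each block then forces $(\nu,\pi)=(\lambda,\mu)$ and hence $\mD_{Y^{\lambda|\mu}}(X)\simeq L(\lambda|\chi^\mu)$. The main obstacle in this plan is this final consistency check --- verifying that highest weights behave correctly under Levi-type restriction for the general linear groups in $\Ver_4^+$ --- which can be handled using the explicit Borel structure from Section~\ref{GLX:intro} together with the reflection formalism of Section~\ref{section-reflections}.
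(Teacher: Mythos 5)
Your proposal is correct and follows essentially the same route as the paper, whose entire proof reads ``This follows immediately from Lemma~\ref{lem:poly_glx} and Theorem~\ref{thm:classif rep}''; the extra work you do (the counting argument showing that $m>d$, $n\ge d/4$ forces $X$ to be relatively $d$-discerning and that Hypothesis~\ref{hypo} holds, plus the consistency check across different $X$) is precisely what the paper leaves implicit. One caveat on your final step: you claim $\mD_{Y^{\lambda|\mu}}(X)\simeq L(\lambda|\chi^\mu)$ for \emph{any} relatively $d$-discerning $X=m+nP$, and this is false when $m<\ell(\lambda)$. For instance $X=2\unit+P$ is relatively $4$-discerning by Proposition~\ref{prop:discerning}, yet $\mD_{Y^{(1^4)|0}}(X)\simeq L((1,1)|T_2)$; the weight $(1,1,1,1)$ is not even a $\GL(2\unit+P)$-weight. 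The failure sits exactly where you flag the ``main obstacle'': the highest weight of the degree-$d$ block of the restriction of $L(\nu|\chi^\pi)$ along $\GL(X)<\GL(X\oplus X_0)$ is the truncation of $(\nu|\chi^\pi)$ only when $m\ge\ell(\nu)$ and $n\ge\ell(\pi)$; otherwise one must pass through the odd reflections of Section~\ref{section-reflections}, which is the content of the paper's later subsection computing $\mD_{\lambda|\mu}(m+nP)=L(\lambda_1,\dots,\lambda_m|\chi^\mu\cdot M(\lambda[m]))$. Restricted to $m>d$ and $n\ge d/4$ --- which is all the theorem asserts --- your matching argument goes through.
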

\begin{proof}
    This follows immediately from Lemma~\ref{lem:poly_glx} and Theorem~\ref{thm:classif rep}.
\end{proof}

\begin{remark}
    \begin{enumerate}
        \item We have $Y^{
\lambda|0} = Y^\lambda$, the Young module corresponding to $\lambda$, and $Y^{0|4} = R$, the indecomposable $S_4$-module defined in Section 5.4.4 of \cite{coulembier_inductive_2024}.
\item Theorem~\ref{thm:classif:fun} shows that if $X = m + nP$ for $m > d$ and $n > d/4$, then $X$ is $d$-discerning, though this is not a necessary condition. 
\item Theorem~\ref{thm:classif:fun} is the analogue of Donkin's classification of signed Young modules in characteristic $p>2$ in \cite[\S 1]{Donkin}.
\item We will abbreviate $\mD_{Y^{\lambda|\mu}}$ to $\mD_{\lambda|\mu}$, hence
$\mD_{\lambda|\mu}(X)\;\simeq\; L(\lambda|\chi^\mu)$
for $X=m+nP$ with $m>d$.
    \end{enumerate}
\end{remark}

\subsection{Additive polynomial functors}
In this section we classify additive polynomial functors in $\Ver_4^+$. We start with the simple ones.

\begin{proposition}\label{prop:SimpFun}
  The simple additive polynomial functors on $\Ver_4^+$ are $$\Fr_{\mathrm{even}}^i:=\mfrak{D}_{(2^i)|(0)
  }, \quad \Fr_{\mathrm{even}}^j \circ \Fr_{\mathrm{odd}}=\mfrak{D}_{(0)|(2^{j})},\quad\mbox{for }\; i,j\in\mN.$$
\end{proposition}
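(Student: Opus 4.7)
The plan is to combine Proposition~\ref{prop:add} with Theorem~\ref{thm:classif:fun}: together they imply that the simple additive polynomial functors of degree $d$ on $\Ver_4^+$ are exactly the $\mD_{\lambda|\mu}$ for which $Y^{\lambda|\mu}$ is a seed of $\bB^d[\Ver_4^+]$, and Proposition~\ref{prop:seeds} then forces $d = 2^m$ for some $m \in \mN$.

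First, I would verify that each functor in the statement is simple additive and matches its claimed label. For $\Fr_{\mathrm{even}}$, Example~\ref{ex:Fr+} identifies it with $\mD_{\unit}$ in degree $2$, which is $\mD_{(2)|0}$ in our labelling. Iterating and evaluating $\Fr_{\mathrm{even}}^i$ on $X = m + nP$ for $m,n$ large yields the $i$-th classical Frobenius twist of the even part $V$, which equals $L((2^i)|0) = \mD_{(2^i)|0}(X)$ by Theorem~\ref{thm:classif:fun}. For $\Fr_{\mathrm{odd}}$, I would use Lemma~\ref{lem:w2_irrep} (with $j = 1$) to obtain $\Fr_{\mathrm{odd}}(X) = W^{(2)} = L(0|\chi) = \mD_{0|(1)}(X)$. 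Composing with $\Fr_{\mathrm{even}}^j$ and applying Lemma~\ref{lem:w2_irrep} again gives $\Fr_{\mathrm{even}}^j \circ \Fr_{\mathrm{odd}}(X) = W^{(j+2)} = L(0|\chi^{2^j}) = \mD_{0|(2^j)}(X)$.

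Second, I would show that no other pair $(\lambda,\mu)$ with $|\lambda| + 4|\mu| = 2^m$ yields a seed. Equivalently, for any such $(\lambda,\mu)$ not already listed, the functor $\mD_{\lambda|\mu}$ should fail to be additive. One approach is to evaluate on a direct sum $X_1 \oplus X_2$ of relatively $d$-discerning objects and argue via the highest-weight combinatorics of \S\ref{GLX:intro} that the restriction of $L(\lambda|\chi^\mu)$ along $\GL(X_1) \times \GL(X_2) \subset \GL(X_1 \oplus X_2)$ contains a summand of mixed $\mathbb{G}_m$-bidegree. Alternatively, one uses the Frobenius quotient $\Frob: \GL(m_0 + n_0 P) \to \GL(m_0) \times \GL(n_0)$ of \S\ref{sec:FrobTwist}: additivity of $\mD_{\lambda|\mu}$ should force $L(\lambda|\chi^\mu)$ to pull back from an additive polynomial irrep of the reductive quotient, and since the additive polynomial functors on $\Vecc$ are only the classical Frobenius twists, only the two families in the statement survive.

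The hardest step will be ruling out all other seeds rigorously, in particular justifying that additivity forces factorisation through $\Frob$. A clean alternative would be to invoke Proposition~\ref{prop:seeds}(3) directly and determine the vertex of each $Y^{\lambda|\mu}$; but this requires an analogue, for the ``mixed'' modules $Y^{\lambda|\mu}$, of the classical vertex theory of Young modules, which in the form needed is not yet recorded in the paper.
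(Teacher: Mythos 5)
Your first step --- verifying that $\mfrak{D}_{(2^i)|(0)}$ and $\mfrak{D}_{(0)|(2^j)}$ are additive and carry the claimed labels via Example~\ref{ex:Fr+} and Lemma~\ref{lem:w2_irrep} --- matches the paper. The gap is in the converse, and you have correctly identified it yourself: neither of your two proposed strategies for ruling out all other $\mD_{\lambda|\mu}$ is carried out, and the key claim of the second one (that additivity forces $L(\lambda|\chi^\mu)$ to pull back along $\Frob$) is essentially the statement to be proved, not a tool available in advance. The first strategy (exhibiting a mixed-bidegree summand in the restriction of $L(\lambda|\chi^\mu)$ to $\GL(X_1)\times\GL(X_2)$) would require branching data for these restrictions that the paper never develops; the odd-reflection machinery of \S\ref{section-reflections} computes highest weights with respect to different Borels of a single $\GL(m+nP)$, which is not the same computation. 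The vertex approach likewise has no support in the text. So as written the proof is incomplete.

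The paper closes the converse by a much more elementary device that you may want to adopt: since every object of $\Ver_4^+$ is a direct sum of copies of $\1$ and $P$, a nonzero \emph{additive} functor must be nonzero on $\1$ or on $P$, so it suffices to examine those two evaluations. For non-vanishing on $\1$: because $\mD_{\lambda|\mu}$ is a subquotient of $\mD_{\lambda|(0)}\otimes\mD_{(0)|\mu}$ and the second factor kills $\1$ for $\mu\neq(0)$, only the $\mD_{\lambda|(0)}$ survive, and the classical theory of polynomial functors on $\Vecc$ then forces $\lambda=(2^i)$. For non-vanishing on $P$: by Lemma~\ref{lem:poly_glx} and Proposition~\ref{prop:GLP} there are at most two simple polynomial $\GL(P)$-representations in each degree $2^i$; in degree $2$ the single candidate $T_2$ is accounted for by $\mD_{(2)|(0)}$ (vanishes on $P$) or $\mD_{(1,1)|(0)}$ (not additive on $\1$), and in degree $2^i$, $i\ge 2$, the candidate $\chi^{2^{i-2}}$ is the evaluation of $\mD_{(0)|(2^{i-2})}$ while $\xi\chi^{2^{i-2}}$ comes from a functor $\mD_{\lambda|\mu}$ with $\lambda\neq 0$, which kills $\1$ yet grows on $m+P$ as $m$ increases (it contains the $\GL(m)$-representation of highest weight $\lambda$ once $m\ge\ell(\lambda)$), contradicting additivity. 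This pointwise check on $\{\1,P\}$ is what replaces your factorisation claim, and it is where the real content of the converse lies.
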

\begin{remark}
    Note that $\Fr_{\mathrm{even}} = \Fr_+$, so $\Fr^i_{\mathrm{even}} = \Fr_+^i =\Fr_+^{(i)}$. 
\end{remark}

\begin{proof}
  The polynomial functor $\mfrak{D}_{(2^i)|(0)}$ is the $i$th Frobenius twist, acting as $V \oplus W \otimes P \mapsto V^{(i)}$. Meanwhile, by Lemma~\ref{lem:w2_irrep}, $\mfrak{D}_{(0)|(2^j)}$
  acts as $V \oplus W \otimes P \mapsto W^{(j + 2)}$, which
  is also additive. 
  
  To see that these are the only simple additive functors, note that we only have to
  check simple polynomial functors that do not vanish on $\1$ or $P$ of
  degree $2^i$. Since $\mfrak{D}_{\lambda|\mu}$ is a subquotient of $\mfrak{D}_{\lambda|(0)} \otimes \mfrak{D}_{(0)|\mu}$ and the
  latter kills $\1$ when $\mu$ is nontrivial, we have that only
  functors of the form $\mfrak{D}_{\lambda|(0)}$ do not vanish on
  $\1$. From the classical theory of polynomial functors, such
  functors are only additive if $\lambda = (2^i, 0, \ldots, 0)$.

  It remains to check the simple polynomial functors that do not send $P$ to 0, thus producing the simple
  polynomial representations of degree $2^i$ of $\operatorname{GL} (P)$. We can enumerate those via the corresponding representations, labelled by $\bT$, thanks to Lemma~\ref{lem:poly_glx}.
  \begin{itemize}
    \item If $i = 1$, in degree 2 there is only one $\GL(P)$-representation, $T_2$, which thus must be either the evaluation of $\mfrak{D}_{(2)|(0)}$ or $\mfrak{D}_{(1, 1)|(0)}$. The former is the
    Frobenius twist, so it vanishes on $P$, and the latter is not additive
    when applied to $\1$.
    
    \item If $i \ge 2$, there are two polynomial representations, $\chi^{2^{i - 1}}$ and
    $\xi \chi^{2^{i - 1}}$. It is clear that $\chi^{2^{i - 1}}$ is the evaluation of $\mfrak{D}_{(0)|(2^{i-1})}$, so it is additive. Meanwhile, there exists a degree 4 partition $\lambda$ such that $\xi \chi = \mD_{(\lambda)|(0)}(P)$. Hence, $\xi \chi^{2^{i - 1}}$ must be the evaluation of $\mfrak{D}_{\lambda|(2^{i - 1} - 1)}$ (even of $\mfrak{D}_{\lambda|0}\otimes\mfrak{D}_{(2^{i - 1} - 1)}$). But this cannot be
    additive, because we know it kills $\1$: if it were additive,
    then $T (m + P) = T (P)$ for all $m$, but clearly this is not true for $m
    \geqslant \ell(\lambda)$, as $T$ will contain the $\operatorname{GL} (m)$ representation with
    highest weight $\lambda$. 

  \end{itemize}
  Therefore, the only simple additive polynomial functors that do not vanish
  on $P$ are those corresponding to $\lambda = 0$ and $\mu = (2^j, 0, \dots, 0)$.
\end{proof}

\begin{remark}
  If the Steinberg tensor product theorem (\cite{hu_supergroups_2024} 4.2.5) were true, then we could say
  immediately that $L (\lambda | \chi^{\mu}) = L (\lambda |
  \1) \otimes L (0 | \chi^{\mu})$, and we would be done
  because tensor products of polynomial functors cannot be additive.
\end{remark}

By Proposition~\ref{prop:add}, classifying all additive polynomial functors of degree $d$, say for $d=2^j$, $j>1$, now corresponds to classifying all polynomial functors for which the only simple constituents are $\mD_{(2^j)|(0)}$ and $\mD_{(0)|(2^{j-2})}$.
We realise this by applying Theorem \ref{thm:cd_comodules}, so we can describe the additive polynomial functors in degree $d$ by choosing $X$ relatively $d$-discerning, finding $C_X^d$, and describing the $C_X^d$-comodules compatible with $\varepsilon$. Let $X = m + nP$ for $m, n > d$; then $C_X^d$ is the subalgebra of `$\Delta_+$-primitive elements' of $k[\End(X)]$, meaning elements that satisfy $\Delta_+(x)=x\otimes 1+1\otimes x$.

\begin{proposition}\label{prop:describeC}
    If $d$ is not a power of 2, $C_X^d = 0$. If $d = 1$, $C_X^1$ consists of all the degree 1 elements of $k[\End(X)]$. If $d = 2$, $C_X^2 = \text{span}(F_{ij}^2)$. If $d = 2^\ell$ for $\ell > 1$, $C_X^d = \text{span}(F_{ij}^d, B_{ij}^d, C_{ij}^d, D_{ij}^d, E_{ij}^d)$.
\end{proposition}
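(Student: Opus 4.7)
The plan is to identify $C_X^d$ with the space of $\Delta_+$-primitive elements of degree $d$ in $M_X = \Sym(\uEnd X)$, exhibit the claimed primitives by direct computation, and show they exhaust $C_X^d$ via a tensor product decomposition.

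For the exhibition, the $\Ver_4^+$-commutativity identity $ab = ba + a'b'$ specializes to $(x+y)^2 = x^2 + y^2 + x'y'$ in any commutative algebra. Applied inside $M_X \otimes M_X$, using $(a \otimes 1)' = a' \otimes 1$ and $(1 \otimes a)' = 1 \otimes a'$, this yields
$$\Delta_+(a^2) = a^2 \otimes 1 + 1 \otimes a^2 + a' \otimes a'$$
for every degree one generator $a$, which is primitive iff $a' = 0$, that is $a = F_{ij}$. For $d = 2^\ell$ with $\ell \ge 2$, all three summands $a^2 \otimes 1$, $1 \otimes a^2$, and $a' \otimes a'$ lie in $\ker \delta$ of $M_X \otimes M_X$ (since $(a^2)' = 2aa' = 0$ and $(a' \otimes a')' = 0$), hence commute strictly. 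The characteristic 2 Frobenius identity then yields
$$\Delta_+(a^d) = \Delta_+(a^2)^{2^{\ell-1}} = a^d \otimes 1 + 1 \otimes a^d + (a' \otimes a')^{2^{\ell-1}} = a^d \otimes 1 + 1 \otimes a^d,$$
using $(a')^2 = 0$, so each such $a^d$ is primitive. The case $d = 1$ is immediate from the definition of $\Delta_+$.

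For exhaustion, decompose $\uEnd X \cong m^2\,\unit \oplus 2n(m+n)\,P$ (using $\uEnd(P) \cong 2P$), so that $M_X \cong \Sym(\unit)^{\otimes m^2} \otimes \Sym(P)^{\otimes 2n(m+n)}$ as Hopf algebras in $\Ver_4^+$. Since the primitive space of a tensor product of connected graded Hopf algebras is the direct sum of the primitive spaces of the factors, it suffices to compute $\operatorname{Prim}^d(\Sym \unit)$ and $\operatorname{Prim}^d(\Sym P)$. The former is the classical statement that primitives of $k[F]$ in characteristic 2 with $F$ primitive are spanned by $F^{2^k}$ in degree $2^k$ and vanish in other degrees. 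For $\Sym(P) = k[x, x']/((x')^2)$, each degree $d \ge 1$ has basis $\{x^d, x^{d-1}x'\}$; setting $u = x \otimes 1$, $v = 1 \otimes x$, $z = x' \otimes x'$, we have $[v, u] = z$ with $z$ central and $z^2 = 0$, and induction on $d$ gives
$$\Delta_+(x^d) = (u+v)^d = \sum_{k=0}^d \binom{d}{k}\, x^{d-k} \otimes x^k + z \cdot R_d$$
for some $R_d \in M_X \otimes M_X$. The pure binomial terms $\binom{d}{k}\, x^{d-k} \otimes x^k$ for $0 < k < d$ cannot be cancelled by any multiple of $\Delta_+(x^{d-1}x')$, whose every term contains an $x'$ factor; this forces $d$ to be a power of 2 if the $x^d$-coefficient of a primitive is nonzero. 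The parallel binomial analysis for $\Delta_+(x^{d-1}x')$ (forcing $d-1$ to be a power of 2 on the $x^{d-1}x'$-coefficient side) together with explicit checks in degree 2 then rule out all remaining cases, leaving $\operatorname{Prim}^d(\Sym P)$ equal to $\{x, x'\}$ for $d = 1$, $\operatorname{span}(x^{2^\ell})$ for $d = 2^\ell$ with $\ell \ge 2$, and $0$ otherwise.

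The main obstacle is the analysis inside $\Sym(P)$ for degrees $d$ not a power of 2, together with the companion check that $x^{d-1}x'$ contributes no new primitives in degrees $2^\ell$ with $\ell \ge 2$: one must carefully track both the binomial terms and the $z$-corrections to exclude mixed linear combinations. Combining the per-factor results and counting dimensions (the $m^2$ copies of $\Sym \unit$ contribute the $F_{ij}^d$, while the $2n(m+n) = 2mn + 2n^2$ copies of $\Sym P$ contribute the $B_{ij}^d, C_{ij}^d, D_{ij}^d, E_{ij}^d$) then verifies the proposition in all four cases.
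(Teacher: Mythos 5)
Your proof is correct in substance but takes a genuinely different route from the paper on the exhaustion step. The exhibition of the primitives (via $\Delta_+(a^2)=a^2\otimes 1+1\otimes a^2+a'\otimes a'$ and the Frobenius identity for the higher powers, using that the three summands lie in $\ker\delta$ and hence commute) is essentially identical to the paper's. For exhaustion, the paper instead views elements of $k[\End(X)]$ as noncommutative polynomials in the matrix entries, characterises primitives as additive polynomials, reduces by specialisation to single-variable additive polynomials, and invokes the classical fact that $\alpha^m+(1+\alpha)^m=1$ for all $\alpha$ forces $m=2^j$; the case $y\notin\ker\delta$, $d=2$ is then excluded separately. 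Your approach decomposes $\uEnd(X)\cong m^2\,\unit\oplus 2n(m+n)P$, uses that primitives of a tensor product of connected graded Hopf algebras (valid in any symmetric tensor category, by projecting $\Delta(z)-z\otimes 1-1\otimes z$ onto the components $A_i\otimes B_0\otimes A_0\otimes B_j$) split as the sum of the primitives of the factors, and reduces everything to the two local computations $\operatorname{Prim}(\Sym\unit)$ and $\operatorname{Prim}(\Sym P)$. This is more structural and arguably cleaner than the paper's specialisation argument, at the cost of having to do the $\Sym(P)$ computation honestly; the dimension count at the end matches the paper's basis exactly.

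One step of your sketch is under-justified: ruling out a nonzero coefficient $\beta$ of $x^{d-1}x'$ in a primitive of $\Sym(P)$. Your stated plan (binomial vanishing forcing $d-1$ to be a power of $2$, plus checks in degree $2$) does not by itself dispose of $d=3,5,9,\dots$. The clean fix is to note that in
$$\Delta_+(x^{d-1}x')=(u+v)^{d-1}(u'+v'),\qquad u=x\otimes 1,\ v=1\otimes x,$$
the term $u^{d-1}v'=x^{d-1}\otimes x'$ occurs with coefficient $\binom{d-1}{0}=1$ and cannot be cancelled by anything coming from $\alpha x^d$ (whose coproduct only produces terms with zero or two $x'$-factors); hence $\beta=0$ for every $d\ge 2$, independently of any binomial condition. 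With that observation your computation of $\operatorname{Prim}^d(\Sym P)$ closes in all degrees and the proposition follows.
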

\begin{proof}
    All the degree 1 elements of $k[\End(X)]$ are primitive with respect to $\Delta_+$. Note that if $y$ is primitive, we have $$\Delta_+(y^2) = y^2 \otimes 1 + 1 \otimes y^2 + y' \otimes y'.$$ Therefore, if $w \in \ker \delta$ is primitive, so is $w^2$, which implies that $\Delta_+(w^{2^m}) = w^{2^m} \otimes 1 + 1 \otimes w^{2^m}$ for all $m$. We can also then check that if $y \in k[\End(X)]$ is primitive, so is $y^4$. Hence, $\Delta_+(y^{2^n}) = y^{2^n} \otimes 1 + 1 \otimes y^{2^n}$ when $n \ge 2$ for all $y$.
    
    Therefore, $F_{ij}^2$ and $F_{ij}^d, B_{ij}^d, C_{ij}^d, D_{ij}^d, E_{ij}^d$, $d = 2^\ell, \ell > 1$ are primitive. We claim these form a basis for the $\Delta_+$-primitive elements in degree $> 1$.
    
    We may interpret elements of $\End(X)(A)$ as matrices and elements of $k[\End(X)]$ as (noncommutative) polynomials $f(X_1, \dots, X_N)$ on the entries of such matrices $X_1, \dots, X_N$. Then primitive elements in $k[\End(X)]$ are additive polynomials, i.e. for two matrices $X, Y$, $f$ is additive if $$f(X_1 + Y_1, \dots, X_N + Y_N) = f(X_1, \dots, X_N) + f(Y_1, \dots, Y_N).$$ We claim that such polynomials only involve monomials of the form $(X_i)^{2^\ell}$. By using the braiding, we can make it so that $f$ only consists of monomials of the form $X_1^{e_1} \cdots X_n^{e_n}$. Since $f$ is additive, we must have that $$f(0, \dots, 0, X_i, 0, \dots, 0) + f(X_1, \dots, X_{i-1}, 0, X_{i+1}, \dots, X_n) = f,$$
    which implies that $f$ consists only of monomials in $X_i$ and monomials without $X_i$ at all. Applied to all $X_i$, this implies that $f$ consists only of monomials of the form $X_i^{e_i}$, so we can write $f = f_1(X_1) + f_2(X_2) + \cdots + f_N(X_N)$, with each $f_i$ additive. Now the $f_i$ must involve only exponents that are powers of 2. We must have $f_i(\alpha X_i) + f_i((1 + \alpha)X_i) = f_i(X_i)$ for all $\alpha \in k$, so for all $\alpha$, we must have $\alpha^{m} + (1+\alpha)^{m} = 1$ where $X_i^{m}$ has nonzero coefficient in $f_i$. But this is only possible if $m = 2^j$ for some $j$.
    
    However, if $y \notin \ker \delta$, we know that $y^2$ is not primitive with respect to $\Delta_+$. So only 4th powers and above will be primitive. 
\end{proof}

\begin{remark}\begin{enumerate}
    \item When $d \ne 1$, $C_X^d$ lives in $\Vecc \subset \Ver_4^+$. 
    \item A $C_X^d$-comodule $Y$ is compatible with $\varepsilon$, viewed as a map $C_X^d \to k[\pi_1]$, if the natural $k[\pi_1]$-comodule structure on $Y$ agrees with the one induced by $\varepsilon$. In particular, if $Y$ has nontrivial $\delta$-action and $d > 1$, it cannot be compatible with $\varepsilon$ since $\delta$ does not lie in the image of $\varepsilon: C_X^d \to k[\pi_1]$.
\end{enumerate}
    
\end{remark}

\begin{theorem}\label{thm:alladditive}
    \begin{enumerate}
        \item The only polynomial functors in degree 1 are direct sums of the identity and they are additive.
        \item The only additive polynomial functors in degree 2 are direct sums of $\Fr_+$.
        \item If $d = 2^\ell$ for $\ell \ge 2$, then the category of additive polynomial functors of degree $d$ has 5 indecomposable objects. The projective cover of $\mfrak{D}_{(d)|(0)}$ has length two with socle $\mfrak{D}_{(0)|(d/4)}$, the injective of $\mfrak{D}_{(d)|(0)}$ hull has length 2 with top $\mfrak{D}_{(0)|(d/4)}$. Finally the projective cover of $\mfrak{D}_{(0)|(d/4)}$ is also its injective hull and has length three with $\mD_{(d)|0}$ in the middle.
    \end{enumerate}
\end{theorem}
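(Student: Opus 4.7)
The plan is to apply Theorem~\ref{thm:cd_comodules}, which identifies the category of additive polynomial functors of degree $d$ with the category of $C^d_X$-comodules compatible with $\varepsilon$, for any relatively $d$-discerning $X$, combined with Proposition~\ref{prop:describeC}'s description of $C^d_X$ as a vector space. Throughout I would fix $X=m+nP\in\Ver_4^+$ with $m,n$ large enough that $X$ is relatively $d$-discerning.

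Part (1) is essentially formal: since $\bB^1[\Ver_4^+]\simeq\Rep S_1$ has a unique indecomposable $\unit$, $\Fun_k(\bB^1[\Ver_4^+],\Vecc)\simeq\Vecc$ is semisimple with unique simple $\mD_{(1)|(0)}=\id$, which is manifestly additive. For part (2), Proposition~\ref{prop:describeC} gives $C^2_X=\Span\{F_{ij}^2\}$, and a direct computation of $\Delta_\times(F_{ij}^2)=(\Delta_\times F_{ij})^2$ shows that all cross terms containing primed generators vanish by $(a')^2=0$ together with the characteristic-$2$ cancellation of the $\Ver_4^+$-commutator terms, which are symmetric under index transposition. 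This leaves $\Delta_\times(F_{ij}^2)=\sum_k F_{ik}^2\otimes F_{kj}^2$, so $C^2_X$ is the matrix coalgebra on $m$ indices, whose comodules are direct sums of $V^{(1)}$, i.e.\ of $\Fr_+$.

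For part (3), with $d=2^\ell$ and $\ell\ge 2$, the same style of computation applied to all five generator types, using $(a')^d=0$ and the char-$2$ Frobenius identity, together with symmetry-based vanishing of $\Ver_4^+$-commutator cross terms, yields:
\begin{align*}
\Delta_\times(F^d_{ij}) &=\sum_k F^d_{ik}\otimes F^d_{kj},\qquad \Delta_\times(D^d_{ij})=\sum_k D^d_{ik}\otimes D^d_{kj},\\
\Delta_\times(B^d_{ij}) &=\sum_k B^d_{ik}\otimes F^d_{kj}+\sum_k D^d_{ik}\otimes B^d_{kj},\\
\Delta_\times(C^d_{ij}) &=\sum_k F^d_{ik}\otimes C^d_{kj}+\sum_k C^d_{ik}\otimes D^d_{kj},\\
\Delta_\times(E^d_{ij}) &=\sum_k B^d_{ik}\otimes C^d_{kj}+\sum_k E^d_{ik}\otimes D^d_{kj}+\sum_k D^d_{ik}\otimes E^d_{kj}.
\end{align*}
After Morita reduction (the $F^d$- and $D^d$-matrix coalgebras each contribute one simple), the dual of $C^d_X$ becomes the path algebra $kQ/\langle cb\rangle$, where $Q$ has two vertices $\ast_f,\ast_d$ with arrows $b\colon\ast_f\to\ast_d$ and $c\colon\ast_d\to\ast_f$. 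This is a Nakayama algebra on the two-cycle with Kupisch series $(\dim P_f,\dim P_d)=(2,3)$, so its indecomposable modules are precisely the $2+3=5$ uniserial modules, matching the projective/injective structures in the theorem; in particular, $P_d\cong I_d$ since both are uniserial of length $3$ with top $\ast_d$. Identifying the Morita-simple at $\ast_f$ with $\mD_{(d)|(0)}(X)\cong V^{(\ell)}$ and at $\ast_d$ with $\mD_{(0)|(d/4)}(X)\cong W^{(\ell)}$ via Lemma~\ref{lem:w2_irrep} completes the identification. Compatibility with $\varepsilon$ is automatic when $d\ge 2$, since the image of $\varepsilon\colon C^d_X\to k[\pi_1]=k[t]/(t^2)$ is $k\cdot 1$ (as $E^d_{ii}\mapsto t^d=0$ and all other generators go to $0$ or $1$), and since $C^d_X\subset\Vecc$ the $\delta$-action on any comodule is automatically trivial.

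The main obstacle, I expect, is the careful verification of the simplified coproduct formulas: each cross term in $(\Delta_\times x)^d$ must be tracked through the $\Ver_4^+$-commutator $[a,b]=a'b'$ and shown to vanish either by a primed generator being raised to a power $\ge 2$, or via symmetry of the resulting summand making it disappear in characteristic $2$. Once these formulas are in hand, recognising the dual coalgebra as a Nakayama quiver algebra is transparent, and the classification of its indecomposables and the identification $P_d\cong I_d$ are standard consequences of Nakayama representation theory.
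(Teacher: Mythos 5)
Your proposal is correct and follows essentially the same route as the paper: reduce via Theorem~\ref{thm:cd_comodules} and Proposition~\ref{prop:describeC} to describing $C^d_X$ with its coproduct $\Delta_\times$, compute $\Delta_\times$ on the $d$-th powers of the generators (your five formulas are exactly the ones the paper writes down), and read off the comodule category; your dualization to the Nakayama algebra $kQ/\langle cb\rangle$ with Kupisch series $(2,3)$ is just a clean explicit packaging of the final step, which the paper does by exhibiting the indecomposable summands of $C^d_X$ directly. One small point of care: compatibility with $\varepsilon$ is not ``automatic'' — it is precisely what excludes comodules with nontrivial $\delta$-action (which do exist over a coalgebra lying in $\Vecc$); the correct argument, as in the paper's remark, is that $\varepsilon$-compatibility forces the $\delta$-action to agree with the trivial one induced by $\varepsilon$ for $d>1$.
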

\begin{proof}
    Part (1) is a generality, see for instance \cite[Example~8.1.8]{coulembier_inductive_2024}. Alternatively we can use Proposition~\ref{prop:describeC} and observe that $\uEnd(X)$ is Morita equivalent with $\1$.
    Part (2) follows directly from Propositions~\ref{prop:SimpFun} and~\ref{prop:describeC}. 
    
    We make the claims in part (3) more explicit, in terms of $C_X^d$, for $X=m\1+nP$. We already know via Proposition~\ref{prop:SimpFun} that $C_X^d$ has two simple comodules compatible with $\varepsilon$: $M_1^d$, which has underlying object $n\1$ and corresponds to $\mfrak{D}_{(0)|(2^{\ell - 2})}$, and $M_2^d$, which corresponds to $\mfrak{D}_{(d)|(0)}$ and has underlying object $m\1$. We claim that the injective hull of $\mfrak{D}_{(0)|(2^{\ell - 2})}$ is isomorphic to the span of $D_{1j}^d, C_{1j}^d$ and is an extension of $\mfrak{D}_{(0)|(2^{\ell - 2})}$ by $\mfrak{D}_{(d)|(0)}$, while the injective hull of $\mfrak{D}_{(d)|(0)}$ is isomorphic to the span of $F_{1j}^d$, $B_{1j}^d$, $E_{1j}^d$ and has composition series $\mfrak{D}_{(d)|(0)}, \mfrak{D}_{(0)|(2^{\ell - 2})}, \mfrak{D}_{(d)|(0)}$.

    To prove this we start from 
    \begin{align*}
        \Delta_\times(F_{ij}^d) &= \sum_{k = 1}^m F_{ik}^d \otimes F_{kj}^d \\
        \Delta_\times(B_{ij}^d) &= \sum_{k = 1}^m B_{ik}^d \otimes F_{kj}^d + \sum_{k = 1}^n D_{ik}^d \otimes B_{kj}^d \\
        \Delta_\times(C_{ij}^d) &= \sum_{k = 1}^m F_{ik}^d \otimes C_{kj}^d + \sum_{k = 1}^n C_{ik}^d \otimes D_{kj}^d \\
        \Delta_\times(D_{ij}^d) &= \sum_{k= 1}^n D_{ik}^d \otimes D_{kj}^d \\
        \Delta_\times(E_{ij}^d) &= \sum_{k = 1}^m B_{ik}^d \otimes C_{kj}^d + \sum_{k = 1}^n (E_{ik}^d \otimes D_{kj}^d + D_{ik}^d \otimes E_{kj}^d).
    \end{align*}
    Since $C_X^d$ acts on itself via $\Delta_\times$, for fixed $1 \le i \le m$, the span of $\{F_{ij}^d, 1 \le j \le m\}$ is a simple $C_X^d$-comodule corresponding to the $\GL(X)$-module where $(F, B, C, D, E)$ acts as $F^{(\ell)}$. Likewise, for fixed $1 \le i \le n$, the span of $\{D_{ij}^d, 1 \le j \le n\}$ is a simple $C_X^d$ comodule corresponding to the $\GL(X)$ module where $(F, B, C, D, E)$ acts as $D^{(\ell)}$. We also see that for fixed $1 \le i \le m$, the span of $F_{ij}^d, C_{ij}^d$ is indecomposable and an extension of $\mfrak{D}_{(0)|(2^{\ell - 2})}$ by $\mfrak{D}_{(d)|(0)}$, corresponding to the $\GL(X)$ module where $(F, B, C, D, E)$ acts as $\begin{pmatrix} F^{\ell} & C^{\ell} \\ 0 & D^{\ell} \end{pmatrix}$. Likewise, for fixed $1 \le i \le n$, the span of $B_{ij}^d$, $D_{ij}^d$, and $E_{ij}^d$ is indecomposable and has composition series $\mfrak{D}_{(d)|(0)}, \mfrak{D}_{(0)|(2^{\ell - 2})}, \mfrak{D}_{(d)|(0)}$, corresponding to the $\GL(X)$ module where $(F, B, C, D, E)$ cats as $\begin{pmatrix} D^{(\ell)} & B^{(\ell)} & E^{(\ell)} \\ 0 & F^{(\ell)} & C^{(\ell)} \\ 0 & 0 & D^{(\ell)} \end{pmatrix}$. By looking at $\Delta_\times$, we see these are all the direct summands of $C_X^d$.
\end{proof}

\begin{remark}
    Let $d = 2^\ell$ for $\ell \ge 2$. It follows that $\mfrak{D}_{(d)|(0)}$ and $\mfrak{D}_{(0)|(2^{\ell - 2})}$ have no self-extensions (since the additive polynomial functors form a Serre subcategory). This implies that over $\GL(m+nP)$, $L(0|\chi)$ has no self-extensions inside $\Rep (\GL(m+nP),\varepsilon)$ when $m\ge 2$ and $n\ge 1$ (using Proposition~\ref{prop:discerning} below). On the other hand, when $m=0$ and $n=1$, there is a self-extension of $L(0|\chi)$ by $L(0|\chi)$, which appears as a subquotient in $T_1 T_3$: it sends $\begin{pmatrix} a & a' \\ b & a + b' \end{pmatrix} \in \GL(P)(A)$ to $$\begin{pmatrix} a^4 & a^2b^2 + a^2ba' + a^3b' + aba'b' \\ 0 & a^4 \end{pmatrix}.$$
\end{remark}

\begin{proposition}
    Ignoring degree 1, there are only exact polynomial functors in degree $2^\ell$ for $\ell\ge 2$. In the latter degrees the unique indecomposable exact polynomial functor corresponds to the projective injective object in the category of additive polynomial functors.
\end{proposition}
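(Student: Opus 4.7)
The plan is to reduce exactness to additivity and then inspect the five indecomposable additive polynomial functors from Theorem~\ref{thm:alladditive}(3) against the essentially unique nonsplit short exact sequence
\[
0 \to \unit \to P \to \unit \to 0
\]
in $\Ver_4^+$.

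First I would verify that every exact polynomial functor is automatically additive. Applying $\mF$ to the biproduct sequence $0 \to X \to X \oplus Y \to Y \to 0$ and using the canonical decomposition $\mF(X \oplus Y) = \mF(X) \oplus \mF(Y) \oplus \bigoplus_{0<i<d}\mF_i(X,Y)$ of Remark~\ref{rem:poly_vanishing}(2), the image of $\mF(X \inj X \oplus Y)$ is $\mF(X)$ while the kernel of $\mF(X \oplus Y \tto Y)$ equals $\mF(X) \oplus \bigoplus_{0<i<d}\mF_i(X,Y)$. Exactness forces these to coincide, so every cross term vanishes, and by Proposition~\ref{prop:add} the functor $\mF$ is additive. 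By Remark~\ref{rem:poly_vanishing}(1) only degrees $1$ and powers of $2$ remain, with degree $1$ handled by Theorem~\ref{thm:alladditive}(1).

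Next, every short exact sequence in $\Ver_4^+$ is, up to isomorphism, a direct sum of split sequences and copies of the displayed nonsplit one: this follows from the decomposition of every object as $m\unit \oplus nP$, the projective-injectivity of $P$, and $\Ext^1(\unit,\unit) = k$. Since additive functors always preserve split sequences, an additive polynomial functor is exact if and only if it preserves the nonsplit sequence; in particular $\mF(P)$ must be an extension of $\mF(\unit)$ by itself of length $2\dim \mF(\unit)$. I would then run this dimensional criterion. In degree $2$, the unique simple $\Fr_+$ has $\Fr_+(\unit) = \unit$ and $\Fr_+(P) = 0$, failing it. In degree $d = 2^\ell$ with $\ell \ge 2$, using $A = \mD_{(d)|(0)}: m\unit \oplus nP \mapsto m\unit$ and $B = \mD_{(0)|(d/4)}: m\unit \oplus nP \mapsto n\unit$ together with additivity, a direct check shows that the two simples and the two length-$2$ indecomposables all violate $\dim \mF(P) = 2\dim \mF(\unit)$. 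Only the length-$3$ projective-injective $\mathfrak P$ (composition factors $B,A,B$) survives: here $\mathfrak P(\unit) = \unit$ and $\mathfrak P(P)$ is a length-$2$ object with both composition factors $\unit$.

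Finally, to verify that $\mathfrak P$ really is exact, it remains to show $\mathfrak P(P) \cong P$ rather than $2\unit$, so that the induced sequence $0 \to \mathfrak P(\unit) \to \mathfrak P(P) \to \mathfrak P(\unit) \to 0$ agrees with the nonsplit one. I would use the explicit description of $\mathfrak P$ as a summand of $C^d_X$ given in the proof of Theorem~\ref{thm:alladditive} to read off the $\GL(P)$-action on $\mathfrak P(P)$, and then compose with $\varepsilon: \pi_1 \to \GL(P)$, $b \mapsto \left(\begin{smallmatrix}1 & 0 \\ b & 1\end{smallmatrix}\right)$ from Remark~\ref{rem:epsilon}. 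Because a length-$2$ object in $\Ver_4^+$ with both composition factors $\unit$ is $2\unit$ if $\pi_1$ acts trivially and $P$ otherwise, nontriviality of the induced $\pi_1$-action identifies $\mathfrak P(P)$ with $P$ and yields exactness. The main obstacle is this last step: one must isolate the correct projective-injective summand of $C^d_X$ (as opposed to the length-$2$ or simple ones) and perform the explicit matrix computation showing that the $\pi_1$-action on $\mathfrak P(P)$ obtained via $\varepsilon$ is nontrivial.
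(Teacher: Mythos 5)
Your reduction of exactness to additivity, the decomposition of short exact sequences in $\Ver_4^+$, and the length count that eliminates four of the five indecomposable additive functors are all correct and match the first half of the paper's argument. The final step, however, rests on a false criterion. You claim that exactness of $\mathfrak P$ forces $\mathfrak P(P)\cong P$, ``so that the induced sequence agrees with the nonsplit one.'' An exact functor only has to send $0\to\unit\to P\to\unit\to 0$ to \emph{some} short exact sequence; nothing prevents the image from being split, and here it is: the projective--injective summand of $C^d_X$ evaluated at $X=P$ is $\operatorname{span}(D_{11}^d,E_{11}^d)$, on which $\delta$ acts by zero (since $\delta(y^d)=dy^{d-1}y'=0$ for $d$ even), so $\mathfrak P(P)\cong 2\unit$. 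Equivalently, $\varepsilon(b)$ acts by $\left(\begin{smallmatrix}1&b^d\\0&1\end{smallmatrix}\right)$, which is the identity because $b^2=0$ for $b\in\pi_1(A)$ and $d\ge 4$. This is consistent with the paper, which records that $\mF_4$ takes values in $\Vecc\subset\Ver_4^+$. So the $\pi_1$-computation you propose, done correctly, returns ``trivial action,'' and your criterion would then lead you to conclude---wrongly---that $\mathfrak P$ is not exact.

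What must actually be verified is that $\mathfrak P$ is nonzero on the two maps of the sequence, equivalently that $\mathfrak P(\delta)\neq 0$ for the nilpotent endomorphism $\delta=i\circ\pi\in\End(P)$: since $\Hom(\unit,P)$ and $\Hom(P,\unit)$ are one-dimensional, $\mathfrak P(\delta)\ne 0$ forces $\mathfrak P(i)$ injective and $\mathfrak P(\pi)$ surjective, and a length count then gives exactness. This is a question about the action of the monoid $\End(P)(k)$ on the comodule, not about the $\pi_1$- or $\GL(P)$-action on the underlying object; evaluating the comodule at the non-invertible point $(a,b)=(0,1)$ gives $\left(\begin{smallmatrix}0&1\\0&0\end{smallmatrix}\right)\neq 0$, which does close the gap and would give a direct computational proof. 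The paper instead argues indirectly: it imports from \cite{CF} the existence of an exact additive polynomial functor of degree $4$ (the functors $\Phi^\lambda_1$), which by the uniqueness of the candidate must involve $F_4$, and then gets exactness in degrees $2^{i+2}$ by composing with Frobenius twists, using that $\mF_4$ lands in $\Vecc$ where $\Fr_+$ is exact.
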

\begin{proof}
    If $F$ is an exact polynomial functor, then the length of $\mF(P)$ must be twice the length of $\mF(\unit)$. By the classification of additive polynomial functors in Theorem~\ref{thm:alladditive}, the only candidate for an exact polynomial functor is thus the projective polynomial functor of length 3 in degree $2^\ell$ for $\ell\ge 2$. We argue that these polynomial functors, which we label $F_{2^\ell}$, are indeed exact indirectly by showing that there must be some exact polynomial functors in those degrees.

    In \cite{CF}, for every tensor category $\cC$, an additive monoidal functor
    \begin{equation}\label{eqCF}\cC\to\cC\boxtimes\Ver_4\end{equation}
    is constructed which is polynomial of degree 4 (more precisely, such a functor $\Phi^\lambda_1$ is constructed for every choice $\lambda\in \mP^1(k)\backslash \mP^1(\mathbb{F}_2)$). Indeed, it is the composition of the fourth tensor power, as a functor from $\cC$ to $\cC\boxtimes \Rep S_4$, follow by applying a (monoidal) functor $\Rep S_4\to \Ver_4$. Composing with an arbitrary faithful exact functor $\Ver_4\to\Vecc$ yields by definition the evaluation of a degree 4 polynomial functor for $\cC$.
    
    Moreover, by 7.2.1 and 7.2.4 in \cite{CF}, for $\lambda\in\mP^1(\mathbb{F}_4)\backslash\mP^1(\mathbb{F}_2)$, the functor \eqref{eqCF} is exact for $\cC=\Ver_4^+$ (even for $\Ver_4$) and moreover, takes values in $\Vecc\boxtimes\Ver_4\subset\Ver_4^+\boxtimes\Ver_4$. Hence, it must be the case that $F_{4}$ is exact, and moreover, that $\mF_4:\Ver_4^+\to\Ver_4^+$ takes values in $\Vecc$. The latter implies that there are exact polynomial functors $\Fr^{i}\circ \mF_4$ in degrees $2^{i+2}$ for all $i\in\mathbb{N}$, which forces $F_{2^{i+2}}$ to be exact.
\end{proof}

\subsection{Simple polynomial representations}

As observed in \cite{Brundan-Kujawa}, the Mullineux involution (relating the label of a simple $S_n$-representation to the label of the simple representation obtained by taking the tensor product with the sign representation in characteristic $p>2$) can be computed using odd reflections of supergroups. Moreover, these odd reflections are useful for classifying polynomial representations. In this section we perform the analogue for $p=2$, where no actual Mullineux involution exists. This allows us to determine how simple polynomial functors evaluate on arbitrary objects in $\Ver_4^+$, which is the main result of this subsection. In other words, we classify simple polynomial representations of all $\GL(m+nP)$ (now without restriction on $m$) by labelling them according to which polynomial functor they come from.
This in turn allows us to determine which objects are relatively $d$-discerning and $d$-faithful.

\begin{definition}
    Let $\lambda$ be any partition. Then there is a unique pair of a 2-restricted partition, which we denote $\overline{\lambda}$, and an even partition, which we denote $\lambda^*$, such that $\overline{\lambda} + \lambda^* = \lambda$. 

For $m \in \mbb{N}$, we define $\lambda[m] := (\lambda_{m + 1}, \lambda_{m + 2}, \dots )$, e.g. $\lambda[0] = \lambda$. We use similar notation for $\GL(nP)$-highest weights.
\end{definition}

\begin{definition}
    Let $\lambda$ be a 2-restricted partition. The rim of $\lambda$ consists of all nodes $(i, j) \in \lambda$ such that $(i + 1, j + 1) \notin \lambda$. Define $j(\lambda)$ to be the union of 4-segments, which are subsets of $\lambda$ containing up to 4 nodes, as follows, starting at the left-most column:
    \begin{itemize}
        \item if the starting column of the segment is not the last column, the 4-segment is the bottommost 4 nodes of the rim in this column. If the rim has less than 4 nodes in this column, the 4-segment is the bottommost 2 nodes from this column and up to 2 nodes from the column to the right.
        \item If the starting column is the last column, the bottommost up to 4 nodes of the rim in this column.
    \end{itemize}

    Denote by $0\le j(\lambda)_i\le 2$ the number of boxes in row $i$ of $j(\lambda)$. 
\end{definition}
Note that, in all but the last column, $j(\lambda)$ has an even number of nodes. Moreover, all but the last 4-segment will contain exactly 4 nodes, and the last will contain at most 4 nodes.

\begin{proposition}\label{prop:j}
    For a 2-restricted partition $\lambda$, the set $j(\lambda)$ has the following properties:
    \begin{enumerate}
        \item $j(\lambda)$ is a nonempty subset of the rim;
        \item $\lambda \backslash j(\lambda)$ is also 2-restricted.
    \end{enumerate}
\end{proposition}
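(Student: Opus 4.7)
The plan is as follows. For part (1), the claim $j(\lambda)\subseteq\mathrm{rim}(\lambda)$ is immediate from the definition, since every 4-segment is explicitly built out of nodes in the rim of particular columns of $\lambda$. Nonemptiness follows because the first 4-segment always includes at least the bottommost rim node of the leftmost column, which exists whenever $\lambda$ is nonempty.

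For part (2), the key input is that 2-restrictedness forces $\lambda_i-\lambda_{i+1}\in\{0,1\}$ for every $i$, so every row of $\lambda$ contains exactly one or two rim nodes, all lying at the rightmost positions of that row. I would argue in two stages. First, I would show that $\lambda\backslash j(\lambda)$ has the shape of a partition, by proving that $j(\lambda)\cap(\text{row }i)$ is a right-justified block of row $i$ for every $i$. Under the 2-restricted hypothesis the ``bottommost'' rim nodes of a column are precisely the rightmost nodes of the rows they lie in; and by tracing through the segment construction, whenever a row has two rim nodes either both, or only the rightmost, end up in $j(\lambda)$, and never only the leftmost.

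Second, I would verify the 2-restricted condition for $\mu:=\lambda\backslash j(\lambda)$. Setting $\mu_i=\lambda_i-j(\lambda)_i$, the requirement $\mu_i-\mu_{i+1}\in\{0,1\}$ reduces, via $\lambda_i-\lambda_{i+1}\in\{0,1\}$, to bounds on the differences $j(\lambda)_{i+1}-j(\lambda)_i$: namely $j(\lambda)_i\le j(\lambda)_{i+1}\le j(\lambda)_i+1$ when $\lambda_i=\lambda_{i+1}$, and $j(\lambda)_i-1\le j(\lambda)_{i+1}\le j(\lambda)_i$ when $\lambda_i-\lambda_{i+1}=1$. These can be checked by case analysis on which 4-segments cover rows $i$ and $i+1$. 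The crucial observation is that when $\lambda_i-\lambda_{i+1}=1$ the two rim nodes of row $i$ sit in the consecutive columns $\lambda_{i+1}$ and $\lambda_{i+1}+1$, so the column-based grouping of segments (``bottom 4 of one column'' or ``bottom 2 of one column plus bottom 2 of the next'') always treats such a pair coherently, and the corresponding constraint on $j(\lambda)_i-j(\lambda)_{i+1}$ is forced.

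The main obstacle will be the bookkeeping for the second stage, particularly handling columns with exactly three rim nodes, where the topmost rim node is temporarily skipped by one segment and must be accounted for by a later one, together with the transitions between consecutive segments; a clean argument should iterate through the segments in order and maintain the right-justification and column-parity invariants at each step.
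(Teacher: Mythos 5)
Your reduction of part (2) to the constraints $j(\lambda)_{i+1}-j(\lambda)_i\in\{0,1\}$ when $\lambda_i=\lambda_{i+1}$ and $j(\lambda)_i-j(\lambda)_{i+1}\in\{0,1\}$ when $\lambda_i=\lambda_{i+1}+1$ is exactly the paper's argument, which likewise just lists the admissible pairs $(j(\lambda)_i,j(\lambda)_{i+1})$ and records which of them force $\lambda_i=\lambda_{i+1}$ versus $\lambda_i=\lambda_{i+1}+1$. The column-by-column bookkeeping you defer (including the right-justification point) is asserted rather than carried out in the paper as well, so your plan matches the published proof in both strategy and level of detail.
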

\begin{proof}
    The first is immediate. For the second, note that $(j(\lambda)_i, j(\lambda)_{i + 1})$ is one of $(2, 2)$, $(1, 1)$, $(0, 0)$, $(0, 1)$, $(1, 2)$, $(2, 1)$, or $(1, 0)$. The possibility $(0, 1)$ and $(1, 2)$ occurs only when $\lambda_i = \lambda_{i + 1}$, and the possibilities $(2, 1)$ and $(1, 0)$ occur only when $\lambda_i = \lambda_{i + 1} + 1$. Therefore, if $\lambda_i - \lambda_{i + 1} \in \{0, 1\}$, we will still have $$(\lambda_i - j(\lambda)_i) - (\lambda_{i + 1} - j(\lambda)_{i + 1}) \in \{0, 1\}.$$ So $\lambda \backslash j(\lambda)$ is 2-restricted.
\end{proof}

\begin{definition}
For 2-restricted $\lambda$, let $J^0(\lambda) = \lambda$ and $J^{i}(\lambda) = J^{i-1}(\lambda) \backslash j(J^{i-1}(\lambda))$; this is well-defined as by Proposition~\ref{prop:j}, each $J^{i}(\lambda)$ is 2-restricted and $J^i(\lambda)$.  We also write $J(\lambda)=J^1(\lambda)$.
\end{definition}
Note that $J^i(\lambda)=\varnothing$ for $i\gg 0$ by Proposition~\ref{prop:j}. 

\begin{example}
    Suppose $\lambda = (4, 3, 3, 3, 2, 2, 2, 1, 1)$. Then $J(\lambda) = 2, 2, 2, 2, 2, 1, 1$. The nodes highlighted in yellow are $j(\lambda)$, and the remaining partition is $J(\lambda)$.

\begin{center}
    \ydiagram{4, 3, 3, 3, 2, 2, 2, 1, 1}
    *[*(yellow)]{2 + 2, 2 + 1, 2 + 1, 2 + 1, 0, 1 + 1, 1 + 1, 1, 1}
\end{center}
\end{example}

\begin{example}
    If $\lambda = (1^d)$ and $d > 4$, $j(\lambda)$ is the 4 bottom nodes and $J(\lambda) = (1^{d - 4})$. If $d \le 4$, then $j(\lambda) = \lambda$ and $J(\lambda) = (0)$.
\end{example}

\begin{example}
    If $\lambda = (2, 1^d)$ and $d \ge 3$, then $j(\lambda)$ is the 4 bottom nodes of the first column and the only node in the second column. Then $J(\lambda) = (1^{d - 4})$ if $d \ge 4$ and $(0)$ if $d = 3$. If $d = 1$ or $2$, then $j(\lambda)$ is the bottom 2 nodes of the first column and the only node in the second column; hence, $J(\lambda) = (0)$ when $d = 1$ and $(1)$ when $d = 2$.
\end{example}

\begin{definition}
Call a 2-restricted partition $\lambda$ oddly regular if there does not exist $i$ such that $\lambda_i = \lambda_{i + 1} = \lambda_{i + 2}$ are all odd. 
\end{definition}

\begin{example}
Recall that the 2-rim of $\lambda$ is a subset of the rim, defined as the union of 2-segments as follows: the first 2-segment is the first $2$ nodes of the rim from left to right, and the remaining 2-segments are inductively defined as the next up to $2$ nodes on the rim (reading from left to right) starting from the column to the right of the previous 2-segment. If $\lambda$ is 2-restricted, the 2-rim can also be described as the bottommost up to 2 nodes of the rim in each column.

Hence, if $\lambda$ is oddly regular, then $j(\lambda)$ is the 2-rim of $\lambda$. 
\end{example}

\begin{definition}
    Suppose $\lambda$ is 2-restricted. Define the $\GL(nP)$-weight $M(\lambda)$ by
    \begin{itemize}
    \item Suppose $|j(\lambda)| = 4q + r$, $0 \le r < 4$. Then $$M(\lambda)_1 = \chi^q \begin{cases} T_2 & r = 2 \\ \xi T_1  & r = 1, \lambda_1 \text{ even} \\ T_1  & r = 1, \lambda_1 \text{ odd} \\ \xi T_3  & r = 3, \lambda_1 \text{ even} \\ T_3  & r = 3, \lambda_1 \text{ odd} \\ \1  & r = 0, \lambda_1 \text{ even} \\ \xi  & r = 0, \lambda_1 \text{ odd,} \end{cases}$$.
    \item $M(\lambda)[1] = M(J^1(\lambda))$.
\end{itemize}
\end{definition}

\begin{example}
    If $\lambda=(3,2,1)$, then $J(\lambda)=1$ and $M(\lambda)= (\chi T_1, T_1)$.
\end{example}

\begin{proposition}

Let $L$ be the irreducible $\GL(m + nP)$ representation with highest weight $\lambda|\1$ for $n \gg 0$. Then
$$0|M(\lambda) = (R_{1, n} \cdots R_{m, n})\cdots(R_{1, 2} \cdots R_{m, 2})(R_{1, 1} \cdots R_{m, 1})(\lambda|\1), $$
i.e. $0|M(\lambda)$ is the highest weight of $L$ with respect to the Borel $B(nP + m)$.
\end{proposition}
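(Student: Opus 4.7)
My plan is to induct on the column index $j$ of the $P$-part, reducing the proposition to a single-column key lemma. Each reflection $R_{i, j}$ affects only $\lambda_i$ and $\Lambda_j$, so the composition $R_{1, j} \circ \cdots \circ R_{m, j}$ leaves $\Lambda_k$ for $k \ne j$ unchanged and modifies only the pair $(\lambda, \Lambda_j)$. It therefore suffices to establish the following key lemma: for any $2$-restricted partition $\mu$,
\[
R_{1, 1} \circ \cdots \circ R_{m, 1}(\mu \,|\, \1, \1, \ldots) \;=\; (J(\mu) \,|\, M(\mu)_1, \1, \ldots).
\]
Given this, induction on $j$ shows that after processing columns $1, \ldots, j$, the state is $(J^j(\lambda) \,|\, M(\lambda)_1, \ldots, M(\lambda)_j, \1, \ldots)$; Proposition~\ref{prop:j} ensures each $J^{j-1}(\lambda)$ is $2$-restricted (so the key lemma applies), and the recursive definition $M(\lambda)_j = M(J^{j-1}(\lambda))_1$ glues the $\bT$-coordinates. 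Since $J^n(\lambda) = \emptyset$ for $n \gg 0$, the final state is $(0 \,|\, M(\lambda))$.

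To prove the key lemma I would trace the evolving state $(\mu^{(i)}, \Lambda_1^{(i)})$ as $R_{m, 1}, R_{m-1, 1}, \ldots, R_{1, 1}$ are applied in order, starting from $(\mu, \1)$. The identity $R(2k + c \,|\, \chi^\ell T) = 2k + \beta \,|\, \chi^\ell T'$ (with $R(c|T) = \beta|T'$ and $c \in \{0, 1\}$) reduces each step to a determination controlled by the parity of $\mu_i$ and the current $\Lambda_1$. The core claim is that the reflection $R_{i, 1}$ decreases $\mu_i$ by exactly $j(\mu)_i$ and advances $\Lambda_1$ along the chain $\1 \to T_1 \to T_2 \to T_3 \to \chi\xi \to \chi T_1 \to \cdots$ prescribed by \eqref{eq:r_def}. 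Consequently the final $\Lambda_1$ equals $M(\mu)_1$: each complete $4$-segment consumes $4$ rim nodes and accumulates one extra factor of $\chi$, while the residual incomplete $4$-segment of $r \in \{0, 1, 2, 3\}$ nodes, combined with the parity of $\mu_1$, determines the base factor according to the case table defining $M(\mu)_1$.

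The cleanest formal organization is induction on the number of $4$-segments of $j(\mu)$, with the inductive step removing one complete $4$-segment from the bottom (the resulting smaller partition is again $2$-restricted, by essentially the argument of Proposition~\ref{prop:j}) and a base case handling a single incomplete $4$-segment by direct verification from \eqref{eq:r_def}. The main obstacle I foresee is the case when the first $4$-segment straddles columns $1$ and $2$: it takes $2$ nodes from column $1$ (rows with $\mu_i = 1$, contributing via $R(1|\Lambda_1)$) and $2$ nodes from column $2$ (rows with $\mu_i = 2$, contributing via the $2k$-shift applied to $R(0|\Lambda_1)$). I will need to verify that the reflection sequence transitions from the ``$\mu_i = 1$'' regime to the ``$\mu_i \geq 2$'' regime at precisely the row boundary dictated by the $4$-segment decomposition, with compatible $\Lambda_1$ values at that boundary; this reduces to a finite number of sub-cases indexed by the first $4$-segment's shape and $\mu_1 \bmod 2$, each verifiable by direct appeal to \eqref{eq:r_def}.
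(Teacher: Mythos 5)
Your first step --- reducing the proposition to the single-column statement $(R_{1,1}\cdots R_{m,1})(\mu\,|\,\1,\ldots)=(J(\mu)\,|\,M(\mu)_1,\1,\ldots)$ and then iterating over the $P$-columns using the recursive definitions of $J$ and $M$ --- is exactly the reduction the paper makes, and your row-by-row trace of the state $(\mu^{(i)},\Lambda_1^{(i)})$ via the table \eqref{eq:r_def} is the right computational content. The paper formalizes that trace as an induction on $\ell(\mu)$ that removes the \emph{top} row: since the reflections are applied bottom-up, the first $m-1$ reflections compute the answer for $\mu[1]$ starting from $\1$ (so the inductive hypothesis applies verbatim), and only the single last reflection $R(\mu_1\,|\,M(\mu[1])_1)$ needs to be analyzed, via a case split on $\mu_1-\mu_2\in\{0,1\}$, parities, and $|j(\mu[1])|\bmod 4$.

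Your proposed formal organization of the key lemma --- induction on the number of $4$-segments, removing one complete $4$-segment \emph{from the bottom} --- has a genuine gap, because the bottom segment is the part of the computation that is processed \emph{first}. After the reflections consume the first $4$-segment, the state is $(\mu',\chi\epsilon)$ with $\epsilon\in\{\1,\xi\}$ and typically $\epsilon\neq\1$, so the remaining reflections are not an instance of the key lemma for $\mu'$; and since, e.g., $R(2|\1)=2|\1$ while $R(2|\chi\xi)=1|\chi\xi T_1$, the initial weight changes which rows get decremented, not just the final $\bT$-label. Worse, the combinatorial target of the induction is wrong: $j$ of the truncated partition is not the truncation of $j(\mu)$. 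Concretely, take $\mu=(2,2,2,1,1,1,1,1)$. Here $j(\mu)$ consists of the bottom four column-$1$ rim nodes (rows $5$--$8$) plus the three column-$2$ rim nodes (rows $1$--$3$), so $J(\mu)=(1,1,1,1)$ and $M(\mu)_1=\chi\xi T_3$ (indeed the reflections give $\1\to T_1\to T_2\to T_3\to\chi\xi$ on rows $8$--$5$, then $\chi\xi\to\chi\xi\to\chi\xi T_1\to\chi T_2\to\chi\xi T_3$ on rows $4$--$1$). Removing the first $4$-segment leaves $\mu'=(2,2,2,1)$, for which $j(\mu')=\{(3,1),(4,1),(2,2),(3,2)\}$, $J(\mu')=(2,1)$ and $M(\mu')_1=\chi$ --- none of which matches the rows-$1$--$4$ portion of the answer for $\mu$. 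So the inductive step does not close as stated; you would have to strengthen the hypothesis to arbitrary initial weights $\chi^k$ or $\chi^k\xi$ and replace $j(\mu')$ by a ``phase-shifted'' rim depending on the parity data carried in from below, at which point you are effectively running the paper's top-row induction. The fix is to keep your state-trace but organize it as the paper does (remove the last-processed row, not the first-processed segment), where the only combinatorial input needed is the relation between $j(\mu)_1$ and $(|j(\mu[1])|\bmod 4,\ \mu_1-\mu_2,\ \mu_1\bmod 2)$.
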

\begin{proof}
    Let $\Lambda_0 = \1$; define $\Lambda_i$ and $\mu_1, \dots, \mu_m$ via $R(\lambda_{m-i}|\Lambda_{i-1}) = \mu_{m - i}|\Lambda_i$. Therefore, we have
    $$(R_{i, 1} \cdots R_{m, 1})(\lambda|\1) = \lambda_1, \dots, \lambda_{i - 1}, \mu[i - 1]|\Lambda_{m - i + 1}, \1, \dots, \1$$
    and in particular
    $$(R_{1, 1} \cdots R_{m, 1})(\lambda|\1) = \mu|\Lambda_m, \1, \dots, \1.$$

    So we need to show that $\Lambda_m = M(\lambda)_1$ is as described in the proposition statement, and that $\mu = J(\lambda)$.

    We induct on the length of $\lambda$. If $\ell(\lambda) = 0$, then $|j(\lambda)| = \lambda_1 = 0$ and $M(\lambda) = \1$. If $\ell(\lambda) = 1$, then $|j(\lambda)| = \lambda_1 = 1$, and $M(\lambda) = T_1$. Suppose now that the statement is true for $\lambda[1]$, which is also 2-restricted; therefore, $\Lambda_{m - 1} = M(\lambda[1])_1$ and is dependent on $j(\lambda[1])$ and $\lambda_2$ as stated in the proposition. We are then interested in what $j(\lambda)$ is. Notice that $j(\lambda)$ depends on $j(\lambda[1])$ and $\lambda_1 - \lambda_2$. 
    
    If $\lambda_1 = \lambda_2$, then $\lambda$ and $\lambda[1]$ have the same number of columns. So $j(\lambda)$ includes the last box in the first row of $\lambda$ unless the last 4-segment already has 4 boxes. Hence, $j(\lambda) = j(\lambda[1])$ in this case if $|j(\lambda[1])| \equiv 0 \pmod{4}$; otherwise, $j(\lambda)_1 = 1$.

    If $\lambda_1 = \lambda_2 + 1$, then we have two cases. If the last 4-segment of $j(\lambda[1])$ contains 2 or 4 boxes (equivalently, $|j(\lambda)|$ is even) from the second-to-last column of $\lambda$, then $j(\lambda)_1 = 1$. If the last 4-segment of $j(\lambda[1])$ contains 1 or 3 boxes from the second-to-last column of $\lambda$, then $j(\lambda)_1 = 2$.

    So we can work out what $\Lambda_m$ and $\mu_1$ must be in each case. Below, we write $\Lambda_{m - 1}$ modulo powers of $\chi$ for brevity, and it is easily checked that the powers of $\chi$ agree with the proposition statement as well.

    $\bullet$ First suppose $\lambda_1 = \lambda_2 = 2q+1$. If $|j(\lambda[1])| \equiv 0 \pmod{4}$, then $|j(\lambda)| = |j(\lambda[1])|$. By the inductive hypothesis $\Lambda_{m - 1} = \xi$, and by \eqref{eq:r_def} we have $R(2q + 1|\xi) = 2q + 1|\xi$. Hence, $\Lambda_m = \xi$ and $\mu_1 = \lambda_1$, i.e. $j(\lambda)_1 = 0$, as desired. If $|j(\lambda[1]) \not\equiv 0 \pmod{4}$, $|j(\lambda)| = |j(\lambda[1])| + 1$. By the inductive hypothesis $\Lambda_{m - 1}$ is either $T_1$, $T_2$, or $T_3$. Then we have we have $R(2q+1|T_1) = 2q|T_2$, $R(2q+1|T_2) = 2q|T_3$, $R(2q+1|T_3) = 2q|\chi\xi$, so in each case, $\Lambda_m$ is as desired, and $\mu_1 = \lambda_1 - 1$, i.e. $j(\lambda)_1 = 1$, as desired. 

      $\bullet$ Now suppose $\lambda_1 = \lambda_2 = 2q$. If $|j(\lambda[1])| \not\equiv 0 \pmod{4}$, again $|j(\lambda)| = |j(\lambda[1])| + 1$. By the inductive hypothesis $\Lambda_{m - 1} = \xi T_1, T_2$ or $\xi T_3$. We then have $R(2q|\xi T_1) = 2q - 1 | T_2$, $R(2q|T_2) = 2q - 1 | \xi T_3$ and $R(2q|\xi T_3) = 2q - 1 | \chi$, so $\Lambda_m = T_2, \xi T_3, \chi$ respectively and $\mu_1 = \lambda_1 - 1$, i.e. $j(\lambda)_1 = 1$, as desired. If $|j(\lambda[1])| \equiv 0 \pmod{4}$, then $|j(\lambda)| = |j(\lambda[1])|.$. By induction $\Lambda_{m - 1} = \1$ and $R(2q|\1) = 2q|\1$, so $\Lambda_m = \1$ and $\mu_1 = \lambda_1$, i.e. $j(\lambda)_1 = 1$, as desired.
      
        $\bullet$ Now suppose $\lambda_1 = 2q + 2$ and $\lambda_2 = 2q + 1$. If $|j(\lambda[1])|$ is odd, then $|j(\lambda)| = |j(\lambda[1])| + 2$. By induction $\Lambda_{m - 1} = T_1, T_3$. Then $R(2q + 2|T_1) = 2q|\xi T_3$, $R(2q + 2|T_3) = 2q | \chi \xi T_1$, so $\Lambda_m = \xi T_3, \chi \xi T_1$ respectively and $\mu_1 = \lambda_1 - 2$, as desired. If $|j(\lambda[1])|$ is even, then $|j(\lambda)| = |j(\lambda[1])| + 1$. By induction, $\Lambda_{m - 1} = \xi, T_2$. We have $R(2q + 2|\xi) = 2q + 1|\xi T_1$, $R(2q + 2|T_2) = 2q + 1|\xi T_3$, so $\Lambda_m = \xi T_1, \xi T_3$ respectively and $\mu_1 = \lambda_1 - 1$ as desired.

        $\bullet$ Finally, suppose $\lambda_1 = 2q + 1$ and $\lambda_2 = 2q$. If $|j(\lambda[1])|$ is odd, then $|j(\lambda)| = |j(\lambda[1])| + 2$. By induction $\Lambda_{m - 1} = \xi T_1, \xi T_3$. We have $R(2q + 1|\xi T_1) = 2q - 1|T_3$, and $R(2q + 1|\xi T_3) = 2q - 1|\chi T_1$, so $\Lambda_m = T_3, T_1$ respectively and $\mu_1 = \lambda_1 - 2$, as desired. If $|j(\lambda[1])|$ is even, then we have $|j(\lambda)| = |j(\lambda[1])| + 1$. By induction, $\Lambda_{m - 1} = \1, T_2$. We have $R(2q + 1|\1) = 2q|T_1$ and $R(2q + 1|T_2) = 2q|T_3$, so $\Lambda_m = T_1, T_3$ respectively and $\mu_1 = \lambda_1 - 1$ as desired.
\end{proof}



\begin{example}
    If $\lambda$ is oddly regular, then $j(\lambda)$ is the 2-rim of $\lambda$, and $J(\lambda) = \lambda[2]$. Then $|j(\lambda)| = \lambda_1 + \lambda_2$, and because $\lambda$ is 2-restricted, $j(\lambda)$ uniquely determines $(\lambda_1, \lambda_2)$. Hence, we can read off $M(\lambda)_i$ directly. Let $q_i$ be such that $\lambda_{2i - 1} + \lambda_{2i} = 4q_i + r$ for $0 \le r \le 3$. Then
    $$
    M(\lambda)_i = \chi^{q_i}  \begin{cases}
        T_1 & \lambda_{2i - 1} \text{ odd}, \lambda_{2i} \text{ even} \\
        T_2 & \lambda_{2i - 1} = \lambda_{2i} \text{ odd} \\
        \xi T_3 & \lambda_{2i - 1} \text{ even}, \lambda_{2i} \text{ odd} \\
        \1 & \lambda_{2i - 1} = \lambda_{2i} \text{ even}.
    \end{cases}
    $$
    In particular, $M(\lambda)$ has length $\lfloor \frac{\ell(\lambda) + 1}{2} \rfloor$.
\end{example}

\begin{example}\label{all_ones_m}
    If $\lambda = (1^{4q + r})$, then $j(\lambda)$ is the (up to) 4 bottom nodes and $J(\lambda) = (1^{4(q + 1) + r})$ if $q \ge 1$, and empty otherwise. Hence, $M(\lambda) = \chi^q T_r, \chi^{q - 1} \xi, \chi^{q - 2} \xi, \dots, \xi$, with $q + 1$ parts.
\end{example}

\begin{proposition}
    Let $m, n \gg 0$ and $L$ be the irreducible $\GL(m + nP)$ representation with highest weight $\lambda| \chi^\mu$, where $\lambda, \mu$ are any partitions. Then the highest weight of $L$ with respect to the Borel $B(nP + m)$ is $\lambda^*|\chi^\mu \cdot M(\bar{\lambda})$. Here $\chi^\mu \cdot M(\lambda)$ is the weight $\chi^{\mu_1} M(\lambda)_1, \dots, \chi^{\mu_n} M(\lambda)_n$.
\end{proposition}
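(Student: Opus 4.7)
The plan is to reduce the claim to the case $\mu = 0$ with $\lambda$ $2$-restricted, which is exactly what the previous proposition established. The reduction rests on the following observation, immediate from the table \eqref{eq:r_def}: the reflection $R$ on weights of $\GL(1+P)$ satisfies
\[ R(2k + \alpha \mid \chi^\ell T) \;=\; 2k + \beta \mid \chi^\ell T' \qquad\text{whenever}\qquad R(\alpha \mid T) \;=\; \beta \mid T'. \]
That is, $R$ commutes with adding an even integer to the $\mbb{Z}$-coordinate and with multiplying the $\bT$-coordinate by a power of $\chi$. This is already pointed out in the text just after \eqref{eq:r_def}.

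Write $\lambda = \bar\lambda + \lambda^*$ as in the definitions. The key step is to show by induction on the number of reflections performed that throughout the sequence, the intermediate weight $\alpha \mid B$ decomposes coordinate-wise as
\[ \alpha_i = \lambda^*_i + \bar\alpha_i, \qquad B_j = \chi^{\mu_j}\, C_j, \]
where $(\bar\alpha \mid C)$ is the state produced by applying the same partial sequence of reflections to $\bar\lambda \mid \1$. The base case is trivial. For the inductive step, an application of $R_{i,j}$ touches only the pair $(\alpha_i, B_j) = (\lambda^*_i + \bar\alpha_i,\; \chi^{\mu_j} C_j)$; by the commutation property,
\[ R(\lambda^*_i + \bar\alpha_i \mid \chi^{\mu_j} C_j) \;=\; \lambda^*_i + \beta \;\bigm|\; \chi^{\mu_j}\, C_j', \]
where $(\beta \mid C_j') = R(\bar\alpha_i \mid C_j)$. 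This is exactly how $(\bar\alpha, C)$ evolves under $R_{i,j}$, so the decomposition is preserved.

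Finally, I apply the previous proposition, which gives that the full sequence $(R_{1, n} \cdots R_{m, n}) \cdots (R_{1, 1} \cdots R_{m, 1})$ sends $\bar\lambda \mid \1$ to $0 \mid M(\bar\lambda)$. Combined with the decomposition just established, the same sequence sends $\lambda \mid \chi^\mu$ to $\lambda^* + 0 \mid \chi^\mu \cdot M(\bar\lambda) = \lambda^* \mid \chi^\mu \cdot M(\bar\lambda)$, which is the highest weight of $L$ with respect to $B(nP + m)$ as claimed. I do not anticipate a genuine obstacle: the only point requiring care is that the coordinate-wise decomposition is preserved throughout the full sequence of reflections, and this is immediate from the fact that each $R_{i,j}$ acts only on one pair of coordinates via $R$, together with the commutation property above.
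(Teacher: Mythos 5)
Your proof is correct and takes essentially the same approach as the paper: the paper's own (very terse) proof invokes exactly the commutation property $R(2k+\alpha\mid\chi^\ell T)=2k+\beta\mid\chi^\ell T'$ to reduce to the preceding proposition, and you have simply made the induction over the reflection sequence explicit. No gaps.
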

\begin{proof}
    We can see this computationally by noting that for $\GL(1 + P)$, $L(2|\1)$ and $L(0|\chi)$ are characters of $\GL(1 + P)$. Hence, if $R(\alpha|T) = \beta|T'$ for $\alpha, \beta \in \mbb{Z}$ and $T, T'$ irreducible $\GL(P)$-representations, we have $R(2k + \alpha|\chi^\ell T) = 2k + \beta|\chi^\ell T'$.
\end{proof}

\begin{theorem}
    Let $\mfrak{D}_{\lambda|\mu}$ be a simple polynomial functor on $\Ver_4^+$. Then $\mfrak{D}_{\lambda| \mu}$ is nonzero on $m + nP$ if and only if 
    \begin{enumerate}
        \item $\lambda[m]$ is $2$-restricted, and
        \item $M(\lambda[m])$ has length at most $n$, and
        \item $\mu[n] = 0$ (i.e. $\mu$ has length at most $n$).
    \end{enumerate}
    If the conditions are satisfied then $\mD_{\lambda|\mu}(m+nP)=L(\lambda_1,\ldots,\lambda_m|\chi^\mu \cdot M(\lambda[m]))$.
\end{theorem}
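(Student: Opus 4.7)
The plan is to compute $\mD_{\lambda|\mu}(X)$ for $X = m + nP$ by passing to a larger relatively $d$-discerning object and reducing to an invariant calculation, which is then evaluated via the reflection calculus of Section~\ref{section-reflections}. Concretely, pick $M, N \gg 0$ so that $X' = M + NP$ is relatively $d$-discerning, giving $\mD_{\lambda|\mu}(X') \cong L(\lambda|\chi^\mu)$ by Theorem~\ref{thm:classif:fun}; write $X' = X \oplus Y$ with $Y = (M-m) + (N-n)P$. The decomposition in Remark~\ref{rem:poly_vanishing}(2), applied to the central $\mathbb{G}_m \subset \GL(X)$, then identifies $\mD_{\lambda|\mu}(X)$ with the degree-$d$-in-$X$ piece of $L(\lambda|\chi^\mu)$ restricted to $\GL(X) \times \GL(Y)$, equivalently the $\GL(Y)$-invariant $\GL(X)$-subrepresentation.

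Next, I would compute this invariant via the Borel $B_\sigma$ of $\GL(X')$ that places the basis of $X$ (namely $x_1,\ldots,x_m,y_1,y_1',\ldots,y_n,y_n'$) before that of $Y$. Starting from the standard highest weight $(\lambda|\chi^\mu)$, apply the sequence of reflections $R_{i,j}$ with $m < i \le M$ and $1 \le j \le n$ in the order that shuttles each $y_j$ (for $j \le n$) past $x_{m+1},\ldots,x_M$. These only modify the tail positions $m+1,\ldots,M$ of $\lambda$ and the first $n$ positions of $\Lambda$; using that $R$ is multiplicative in $\chi$ (remark after~\eqref{eq:r_def}), the proposition immediately preceding the theorem—applied to the tail weight $\lambda[m]|\1^n$ viewed as a $\GL((M-m)+nP)$-highest weight—gives the $B_\sigma$-highest weight
\[
(\lambda_1,\ldots,\lambda_m,\lambda[m]^*_1,\ldots,\lambda[m]^*_{M-m} \mid \chi^{\mu_1}M(\overline{\lambda[m]})_1,\ldots,\chi^{\mu_n}M(\overline{\lambda[m]})_n,\chi^{\mu_{n+1}},\ldots,\chi^{\mu_N}),
\]
provided $\ell(M(\overline{\lambda[m]})) \le n$.

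The corresponding summand of the $\GL(X) \times \GL(Y)$-decomposition of $L(\lambda|\chi^\mu)$ has the form $L \boxtimes \1_{\GL(Y)}$ precisely when the $\GL(Y)$-portion of the weight is trivial, i.e., iff $\lambda[m]^* = 0$ (equivalently $\lambda[m]$ is 2-restricted, so $\overline{\lambda[m]} = \lambda[m]$), $\chi^{\mu_j} = \1$ for $j > n$, and $\ell(M(\lambda[m])) \le n$—exactly conditions (1)--(3). Under these, the summand is $L(\lambda_1,\ldots,\lambda_m \mid \chi^\mu \cdot M(\lambda[m])) \boxtimes \1$, contributing the stated simple representation to $\mD_{\lambda|\mu}(X)$; simplicity (via Lemma~\ref{lem:poly_glx}) promotes this contribution to equality. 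For the converse, Lemma~\ref{lem:poly_glx} pins $\mD_{\lambda|\mu}(X)$ down to at most one isomorphism class (the unique simple labelled by $Y^{\lambda|\mu}$); if any of the three conditions fails, the reflection calculus fails to produce a valid polynomial $\GL(X)$-highest weight of the required form, forcing $\mD_{\lambda|\mu}(X) = 0$.

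The main obstacle is making this necessity direction fully rigorous: when a condition fails, one must rule out that a lower $\GL(X) \times \GL(Y)$-summand of $L(\lambda|\chi^\mu)$ contributes a $\GL(Y)$-trivial piece. Invoking simplicity together with the label $Y^{\lambda|\mu}$ gives the result in principle, but a complete argument likely requires supplementary weight-theoretic analysis—for instance, showing that the $\chi^{\mu_j}$ with $j > n$ cannot be absorbed into the $\GL(X)$-weights under any lowering operator. When condition~(2) fails and $n < \ell(M(\overline{\lambda[m]}))$ the cited proposition does not apply verbatim, and I expect a sandwich argument involving an intermediate $\GL(m+n'P)$ for $n' \ge \ell(M(\overline{\lambda[m]}))$—to which the proposition does apply, after which one restricts back to $\GL(m+nP)$—will reduce this case to the already-handled condition~(3) failure.
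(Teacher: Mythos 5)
Your proposal follows essentially the same route as the paper: evaluate $\mD_{\lambda|\mu}$ on a large relatively $d$-discerning $X'=X\oplus Y$ to get $L(\lambda|\chi^\mu)$, use the odd-reflection proposition to recompute its highest weight with respect to the Borel adapted to $m+nP+(M-m)+(N-n)P$, and read off conditions (1)--(3) as the statement that this weight has trivial $Y$-part. The ``main obstacle'' you flag in the necessity direction is not actually a gap, and is closed by the very remark you invoke: by Remark~\ref{rem:poly_vanishing}(2), $\mD_{\lambda|\mu}(X)$ is precisely the summand of $L(\lambda|\chi^\mu)$ of degree $d$ for $\mathbb{G}_m\subset\GL(X)$, and since every weight of $L$ is obtained from its highest $B_\sigma$-weight by subtracting positive roots (which can only lower the $X$-degree), the highest $B_\sigma$-weight already attains the maximal $X$-degree. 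Hence if that weight has nonzero $Y$-part the entire degree-$d$ component vanishes, and no ``lower $\GL(X)\times\GL(Y)$-summand'' needs to be examined; this also disposes of the case $n<\ell(M(\overline{\lambda[m]}))$ without any sandwich argument, since the reflection calculus still computes the highest $B_\sigma$-weight and merely shows its $M(\overline{\lambda[m]})$-string spilling into the $(N-n)P$-part.
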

\begin{proof}
    Let $M, N \gg 0$ and $L$ be the irreducible $\GL(M + NP)$-representation with highest weight $\lambda | \chi^\mu$. If $n \ge \ell(M(\overline{\lambda[m]}))$, then the highest weight of $L$ with respect to $$B := B(m + nP + (M - m) + (N - n)P)\qquad\mbox{is}$$
    $$
    \left(\lambda_1, \dots, \lambda_m, \lambda[m]^*\right) | M\left(\overline{\lambda[m]}\right) \cdot \chi^{(\mu_1, \dots, \mu_n)}, \chi^{\mu[n]}.
    $$

    If $n < \ell(M(\overline{\lambda[m]}))$, then $\mfrak{D}_{\lambda|\mu}$ will vanish on $m + nP$, as the highest weight of $L$ with respect to $B$ will still have nonzero $(M - m) + (N - n)P$-part. Applying \ref{rem:poly_vanishing}(2) with $X = m + nP$ and $Y = (M - m) + (N - n)P$, we therefore see that $\mfrak{D}_{\lambda, \mu}(X \oplus Y)$ does not have a component in degree $d=|\lambda|+4|\mu|$ with respect to $X$, thus $\mfrak{D}_{\lambda|\mu}(X) = 0$.
  \end{proof} 

\begin{corollary}
    The irreducible polynomial representations of $\GL(m + nP)$ are exactly those whose highest weights are of the form $(\lambda_1, \dots, \lambda_m) | M(\lambda[m]) \cdot \chi^\mu$ with $\lambda$ a partition such that $\lambda[m]$ is 2-restricted and $\ell(M(\lambda[m]), \ell(\mu) \le n$.
\end{corollary}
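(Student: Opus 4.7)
The plan is to assemble the corollary from the preceding theorem together with Lemma~\ref{lem:poly_glx} and Theorem~\ref{thm:classif:fun}. These three results, combined, already give a complete parametrisation of the simple polynomial representations of $\GL(m+nP)$ together with their highest weights, and the corollary is then just a rephrasing of that parametrisation in terms of highest weight data alone.

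First I would invoke Theorem~\ref{thm:classif:fun}, which says that the simple polynomial functors of degree $d$ on $\Ver_4^+$ are exactly $\mfrak{D}_{\lambda|\mu}$ as $(\lambda,\mu)$ runs over pairs of partitions with $|\lambda|+4|\mu|=d$. Next, Lemma~\ref{lem:poly_glx} applied with $X=m+nP$ says that the simple polynomial representations of $\GL(X)$ of degree $d$ are exactly the nonzero $\mfrak{D}_{\lambda|\mu}(X)$, and that distinct pairs $(\lambda,\mu)$ whose evaluations are both nonzero yield non-isomorphic representations. Finally, the preceding theorem identifies both the nonvanishing locus and the highest weight: $\mfrak{D}_{\lambda|\mu}(m+nP)$ is nonzero precisely when $\lambda[m]$ is $2$-restricted, $\ell(M(\lambda[m]))\le n$, and $\ell(\mu)\le n$, and in this case its highest weight is $(\lambda_1,\ldots,\lambda_m)\,|\,M(\lambda[m])\cdot\chi^{\mu}$. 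Summing over all degrees $d$ gives exactly the list asserted in the corollary.

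I do not expect any genuine obstacle here: the corollary is essentially bookkeeping. The only point worth flagging explicitly is that the map sending an admissible pair $(\lambda,\mu)$ to the highest weight $(\lambda_1,\ldots,\lambda_m)\,|\,M(\lambda[m])\cdot\chi^{\mu}$ is injective, so that the enumeration is irredundant. This is automatic: irreducible representations are determined up to isomorphism by their highest weights, and Lemma~\ref{lem:poly_glx} guarantees that distinct admissible pairs yield non-isomorphic nonzero evaluations, so they must yield distinct highest weights.
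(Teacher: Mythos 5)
Your proposal is correct and matches the paper's (implicit) derivation: the corollary is stated without proof precisely because it is the immediate combination of Lemma~\ref{lem:poly_glx}, Theorem~\ref{thm:classif:fun}, and the preceding theorem, which is exactly what you assemble. Your closing remark on irredundancy is also the right observation and is indeed covered by the "complete irredundant list" clause of Lemma~\ref{lem:poly_glx}.
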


\begin{proposition}\label{prop:discerning}
Let $X = m + nP$. Then $X$ is relatively $d$-discerning iff
\begin{itemize}
    \item when $d = 1$, $m \ge 1$ or $n \ge 1$
    \item when $d = 2$, $m + n \ge 2$ and $m \ge 1$
    \item when $d = 3$, $m \ge 3$ or $n \ge 1, m \ge 1$
    \item when $d \ge 4$, $m \ge \lfloor d/2\rfloor$ and $n \ge \lfloor d/4 \rfloor$. 
\end{itemize}
\end{proposition}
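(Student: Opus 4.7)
The plan is to invoke the preceding theorem, which characterizes ``$m+nP$ is relatively $d$-discerning'' as the condition that, for every pair of partitions $(\lambda,\mu)$ with $|\lambda|+4|\mu|=d$, the three conditions (i) $\lambda[m]$ is $2$-restricted, (ii) $\ell(M(\lambda[m]))\le n$, and (iii) $\ell(\mu)\le n$ all hold. The proposition then follows by extracting the extremal pairs $(\lambda,\mu)$. The small cases $d\in\{1,2,3\}$ I would handle by direct enumeration of the finitely many admissible pairs.

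For $d\ge 4$, necessity of $n\ge\lfloor d/4\rfloor$ is immediate from (iii) with $\mu=(1^{\lfloor d/4\rfloor})$. Necessity of $m\ge\lfloor d/2\rfloor$ comes from (i) applied to $\lambda=(2^{d/2})$ when $d$ is even or $\lambda=(3,2^{(d-3)/2})$ when $d$ is odd: for any $m'<\lfloor d/2\rfloor$, the truncation $\lambda[m']$ ends in a part $\ge 2$, violating $2$-restrictedness. Sufficiency of (i) under $m\ge\lfloor d/2\rfloor$ is a size argument: any violation of $2$-restrictedness of $\lambda[m]$---an internal gap $\ge 2$ or a final nonzero part $\ge 2$---forces $\lambda$ to contain at least $m+1$ parts of size $\ge 2$, so $|\lambda|\ge 2(m+1)>d$, contradicting $|\lambda|\le d$. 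Condition (iii) reduces to $\ell(\mu)\le|\mu|\le d/4$.

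The core technical step is condition (ii). Using $|\lambda|\ge m\lambda_{m+1}$ together with $m\ge\lfloor d/2\rfloor$ and $|\lambda|\le d$, one finds $\lambda_{m+1}\le 2$ for $d\ge 4$; combined with $2$-restrictedness, this restricts $\lambda[m]$ to be either empty or of shape $(2^p,1^q)$ with $q\ge 1$. A direct induction on $|\nu|$ using the definition of $j$ establishes $\ell(M(\nu))\le\lceil|\nu|/4\rceil$ for all such $\nu$ except the ``anomalous'' family $\nu=(2^{2k-1},1^2)$, where the first $j$-step removes only $3$ nodes and one obtains $\ell(M(\nu))=k+1$. But $\lambda[m]=(2^{2k-1},1^2)$ would force $|\lambda|\ge 2m+4k\ge 2\lfloor d/2\rfloor+4>d$ for $k\ge 1$, so this anomaly cannot arise. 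For all remaining admissible $\nu$, one concludes via $\ell(M(\nu))\le\lceil|\nu|/4\rceil\le\lceil\lceil d/2\rceil/4\rceil\le\lfloor d/4\rfloor$, the last inequality being a routine check valid for $d\ge 4$. The main obstacle throughout is the case analysis in condition (ii), specifically isolating the anomalous family $(2^{2k-1},1^2)$ and showing it is excluded by the size constraints; once that is in place the proposition follows.
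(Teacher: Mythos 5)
Your argument is correct and follows essentially the same route as the paper: reduce to the vanishing criterion for $\mD_{\lambda|\mu}$, obtain necessity from the extremal weights $(2,\dots,2)$ or $(3,2,\dots,2)$ and $r|(\chi,\dots,\chi)$, and obtain sufficiency by bounding $\ell(M(\lambda[m]))$ (the paper does this by reflecting a suitable number of trailing $1$'s and treating $a_1\in\{d-2,d\}$ separately, you by a uniform ceiling estimate). The one piece of superfluous bookkeeping: the sharper inequality $(m+1)\lambda_{m+1}\le|\lambda|\le d$ already forces $\lambda_{m+1}\le 1$ when $m\ge\lfloor d/2\rfloor$, so $\lambda[m]$ is always a column $(1^q)$ and your entire analysis of the shapes $(2^p,1^q)$ and the anomalous family $(2^{2k-1},1^2)$ never arises (though you do correctly exclude it by the same size argument), after which $\ell(M((1^q)))=\lceil q/4\rceil$ with $q\le\lceil d/2\rceil$ closes the case $d\ge 4$ exactly as in the paper.
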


\begin{proof}
If $d < 4$, we can simply check the partitions of $d$ to obtain the above conditions. For example, $\1+P$ is $3$-discerning simply because $\GL(\1+P)$ has three simple polynomial representations of degree 3, see Proposition~\ref{prop:GL1P}. So suppose $d \ge 4$.

The conditions $m \ge \lfloor d/2\rfloor$ and $n \ge \lfloor d/4 \rfloor$ are necessary. Indeed, if $d$ is odd, consider $\lambda = (3, 2, \dots, 2)\vdash d$, and if $d$ is even, consider $\lambda = (2, \dots, 2)\vdash d$, both of which are not 2-restricted and therefore vanish on $m + nP$ if $m <\ell(\lambda)= \lfloor d/2 \rfloor$. Likewise, for $d = 4a + r$ with $0\le r<4$, consider the highest weight $r|(\chi, \dots, \chi)$, whose corresponding polynomial functor will vanish on $m + nP$ when $n < a=\lfloor d / 4 \rfloor$.

To see the conditions sufficient, suppose we have partitions $\lambda, \mu$ with $|\lambda| + 4|\mu| = d$. If the smallest nonzero part of $\lambda$ is 2, we have $\ell(\lambda) \le \lfloor d/2\rfloor$ and $\ell(\mu) \le \lfloor d/4 \rfloor$, so that $\mD_{\lambda|\mu}(X)\not=0$ is immediate. Otherwise, suppose that $\lambda$ has $a_1>0$ components equal to $1$. Let $\nu = (\lambda_1, \dots, \lambda_{\ell(\lambda) - a_1})$, so $\nu$ has smallest part $2$ with $|\nu| = |\lambda| - a_1$ and $\ell(\nu) \le \frac{d - a_1}{2}$. Let $\Lambda = M(1, \dots, 1) \cdot \chi^\mu$, i.e. $\Lambda_j = M(1, \dots, 1)_j \chi^{\mu_j}$. Applying $R_{\ell(\lambda),1}, \dots, R_{\ell(\lambda) - a_1, 1}$ to $\lambda|\chi^{\mu}$, we get the weight $\nu|\Lambda$. Because $\ell(M((1, \dots, 1))) = \lceil a_1 / 4 \rceil$ (\ref{all_ones_m}), as long as $\lceil a_1 / 4 \rceil \le \lfloor d / 4 \rfloor$, we would have $\ell(\nu) \le \lfloor d / 2 \rfloor$ and $\ell(\Lambda) \le \lfloor d / 4 \rfloor$ as desired.

However, it is possible that $\lceil a_1 / 4 \rceil > \lfloor d / 4 \rfloor$ if $a_1 \ge d - 2$; we can have either $a_1 = d - 2$ or $a_1 = d$. If $a_1 = d - 2$, reflect only $a_1 - 1$ ones; then $\lceil a_1 - 1/ 4 \rceil = \lfloor d / 4 \rfloor$ while we'll have $\nu = (2, 1)$, and $d \ge 4$, so the functor will not vanish on $m + nP$. If $a_1 = d = 4b + r$, then reflect $a_1 - r$ ones; since $d \ge 4$, we will always have that $r \le \lfloor d / 2\rfloor$, so the functor will not vanish on $m + nP$.
\end{proof}

\begin{lemma}\label{Lem:lengthM}
    Let $\mu$ be a 2-restricted partition. Then $M(\mu)$ has at most $\lceil \frac{|\mu| + 1}{4} \rceil$ parts.
\end{lemma}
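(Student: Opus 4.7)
My plan is to induct on $|\mu|$, using the recursion $M(\mu)[1] = M(J(\mu))$, which yields $\ell(M(\mu)) = 1 + \ell(M(J(\mu)))$ for $\mu \neq \emptyset$; the base case $\mu = \emptyset$ is trivial since $\ell(M(\emptyset)) = 0 \leq 1$. The key dichotomy for the inductive step is whether $|j(\mu)| \geq 4$ or $|j(\mu)| \leq 3$.

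When $|j(\mu)| \geq 4$ we have $|J(\mu)| \leq |\mu| - 4$, and the induction hypothesis gives
\[
\ell(M(J(\mu))) \leq \left\lceil\frac{|J(\mu)|+1}{4}\right\rceil \leq \left\lceil\frac{|\mu|-3}{4}\right\rceil = \left\lceil\frac{|\mu|+1}{4}\right\rceil - 1,
\]
and the bound follows immediately.

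The main content is to handle $|j(\mu)| \leq 3$. The sub-claim to establish here is that $|j(\mu)| \leq 3$ forces $\mu_1 \leq 2$. To prove it, I would examine the first 4-segment in the computation of $j(\mu)$ under the assumption $\mu_1 \geq 3$. If the starting column has at least $4$ rim nodes, the first 4-segment has size $4$. Otherwise, it spans columns $1$ and $2$ with size $\min(c_1-c_2+1,2)+\min(c_2-c_3+1,2)$, where $c_i$ denotes the length of the $i$th column; this value is $2$ only when $c_1 = c_2 = c_3$, a configuration ruled out by the 2-restricted condition $\mu_i - \mu_{i+1} \in \{0,1\}$ together with $\mu_1 \geq 3$. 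So the first 4-segment has size at least $3$, and since $\mu_1 \geq 3$ also guarantees the existence of a further non-empty 4-segment starting at a subsequent column, $|j(\mu)| \geq 4$, contradicting the assumption.

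Once reduced to $\mu_1 \leq 2$, I would enumerate the 2-restricted partitions with $|j(\mu)| \leq 3$: the finite list $(1), (1^2), (1^3), (2), (2,1), (2,1,1)$, and the infinite family $(2^a)$ for $a \geq 2$. The bound is verified directly in the six small cases, where $J(\mu)$ is either $\emptyset$ or $(1)$. For the family $(2^a)$, one computes $J((2^a)) = (2^{a-2}, 1)$, which for $a \geq 3$ has $|j| \geq 4$ (so the induction hypothesis applies cleanly) and for $a = 2$ equals $(1)$; a short arithmetic identity $1 + \lceil (2a-2)/4\rceil = \lceil (2a+1)/4\rceil$ then closes the induction. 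The main obstacle is establishing the sub-claim $\mu_1 \leq 2$: the naive induction loses a step precisely when $|j(\mu)| < 4$, and without control of the number of columns one could in principle accumulate arbitrarily many such losses; ruling this out via the 2-restricted condition is the heart of the argument.
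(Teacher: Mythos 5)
Your argument is correct for the lemma as stated and is essentially the paper's proof: both rest on the observation that applying $j$ removes at least $4$ nodes except for a handful of small partitions, followed by the same arithmetic; the paper simply asserts this combinatorial fact ("Notice that for any 2-restricted $\mu$, if $|\mu|>4$ then $|j(\mu)|\ge 4$..."), whereas you supply a proof of it, organized by the size of $j(\mu)$ rather than of $\mu$. One blemish: $(2)$ and $(2^a)$ are \emph{not} 2-restricted, since 2-restricted requires $\mu_i-\mu_{i+1}<2$ for all $i$ including $i=\ell(\mu)$, so the smallest nonzero part must equal $1$. Your infinite family is therefore out of scope, and your claim that $|j((2^a))|\ge 4$ for $a\ge 3$ is in fact false under the paper's definition (one gets $3$ nodes, e.g. for $(2,2,2)$), so it is fortunate that these cases need not be treated; the genuine exceptional list is just the finite set $(1),(1^2),(1^3),(2,1),(2,1,1)$. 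Relatedly, the constraint $c_1>c_2>\cdots>c_{\mu_1}\ge 1$ forced by 2-restrictedness makes your sub-claim cleaner than you suggest: for $\mu_1\ge 3$ the first 4-segment always has exactly $4$ nodes, so the fallback appeal to a further non-empty segment is not needed.
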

\begin{proof}
    Notice that for any 2-restricted partition $\mu$, if $|\mu| > 4$, then $|j(\mu)| \ge 4$ also; if $|\mu| = 4$, then $|j(\mu)| = 3$ or $4$, and if $|\mu| < 4$, then $j(\mu) = \mu$. Hence, if $|\mu| \equiv 0 \pmod{4}$, $M(\mu)$ has at most $|\mu| / 4 + 1$ parts, and otherwise, $M(\mu)$ has at most $\lceil |\mu| / 4 \rceil$ parts. In either case $M(\mu)$ has at most $\lceil \frac{|\mu| + 1}{4} \rceil$ parts.
\end{proof}

\begin{proposition}
    Let $X = m + nP$. Then $X$ is $d$-faithful, i.e.
$kS_d\to\End(X^{\otimes d})$ is injective, or equivalently
 $\mD_{\lambda|0}$ does not vanish on $X$ when $\lambda \vdash d$ is 2-restricted, if and only if  both $m + 4n \ge d$ and $m + 2n \ge 1 + \lfloor d / 2 \rfloor$.
    
\end{proposition}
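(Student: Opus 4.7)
The plan is to combine the preceding theorem---which says that $\mD_{\lambda|0}(m+nP)$ is nonzero for a 2-restricted $\lambda$ iff $\ell(M(\lambda[m]))\leq n$---with a direct combinatorial analysis of $\ell(M(\mu))$ on 2-restricted partitions $\mu$. Thus $X=m+nP$ is $d$-faithful iff $\ell(M(\lambda[m]))\leq n$ for every 2-restricted $\lambda\vdash d$.

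For necessity, I test two extremal partitions. Taking $\lambda=(1^d)$ gives $\lambda[m]=(1^{\max(d-m,0)})$, and iterating the $j$-segment construction on single-column partitions yields $\ell(M((1^c)))=\lceil c/4\rceil$, so requiring this to be at most $n$ is exactly $m+4n\geq d$. Taking $\lambda=(2^{\lfloor d/2\rfloor},1^{d\bmod 2})$ gives $\lambda[m]=(2^{\lfloor d/2\rfloor-m},1^{d\bmod 2})$ when $m\leq\lfloor d/2\rfloor$; iterating $J$ (which sends $(2^{a'},1^\epsilon)\mapsto(2^{a'-2},1)$ for $a'\geq 2$ and $\epsilon\in\{0,1\}$) yields $\ell(M((2^{a'},1^\epsilon)))=\lceil(a'+1)/2\rceil$, so $\lceil(\lfloor d/2\rfloor-m+1)/2\rceil\leq n$ is precisely $m+2n\geq 1+\lfloor d/2\rfloor$.

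For sufficiency, assume both inequalities and write $\lambda=(2^a,1^b)\vdash d$. If $m\geq a+b$ then $\lambda[m]=\emptyset$ and nothing is required. If $a\leq m<a+b$ then $\lambda[m]=(1^{a+b-m})$ and $\ell(M(\lambda[m]))=\lceil(a+b-m)/4\rceil\leq\lceil(d-m)/4\rceil\leq n$ by the first hypothesis. The substantive case is $m<a$, where $\lambda[m]=(2^{a-m},1^b)$ with $a-m\geq 1$. Here the key input is the bound
\begin{equation*}
    \ell(M((2^{a'},1^{b'})))\;\leq\;\lceil(a'+\lfloor b'/2\rfloor+1)/2\rceil\qquad\text{for every }a'\geq 1,\ b'\geq 0.
\end{equation*}
Applied to $(a',b')=(a-m,b)$, the identity $2(a-m)+b=d-2m$ gives $(a-m)+\lfloor b/2\rfloor=\lfloor d/2\rfloor-m$, so $\ell(M(\lambda[m]))\leq\lceil(\lfloor d/2\rfloor-m+1)/2\rceil\leq n$ by the second hypothesis.

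The main obstacle is establishing the key bound, which I would prove by strong induction on $|\mu|$ using the explicit action of $J$ on $\mu=(2^{a'},1^{b'})$ read off from the $j$-segment description: for $a'\geq 5$ and $b'\geq 3$, $J(\mu)=(2^{a'-4},1^{b'})$, so the potential $a'+\lfloor b'/2\rfloor$ drops by $4$; for $a'=4$ and $b'\geq 3$, $J(\mu)=(1^{b'})$; for $1\leq a'\leq 3$ and $b'\geq 3$, $J(\mu)=(1^{a'+b'-4})$, where the single-column bound $\ell(M((1^c)))=\lceil c/4\rceil$ applies; and for $a'\geq 2$ with $b'\leq 2$, $J(\mu)$ equals $(2^{a'-2},1)$ if $b'\in\{0,1\}$ or $(2^{a'-2},1^2)$ if $b'=2$, with the potential dropping by exactly $2$. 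In each regime the inductive comparison $1+\lceil((a'-2)+\lfloor b'/2\rfloor+1)/2\rceil\leq\lceil(a'+\lfloor b'/2\rfloor+1)/2\rceil$, together with the analogous inequality when $J$ crosses into the single-column regime, closes the argument once the handful of genuinely small cases ($a'=1$ with $b'\in\{0,1,2\}$, and $a'=2$ with $b'=2$) are checked by direct computation of $M_1$.
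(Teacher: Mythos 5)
Your overall strategy---reduce to the criterion $\ell(M(\lambda[m]))\le n$ for all $2$-restricted $\lambda\vdash d$ and then do combinatorics on $\ell(M)$---is also the paper's, but there are two genuine gaps, both traceable to what ``$2$-restricted'' means. A partition is $2$-restricted when $\lambda_i-\lambda_{i+1}\le 1$ for all $i$ (including $\lambda_{\ell(\lambda)}\le 1$); it is \emph{not} the condition that all parts be at most $2$. Thus $(3,2,1)$ and $(4,3,2,1,1)$ are $2$-restricted, while $(2^{d/2})$ is not.

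In the sufficiency direction you write an arbitrary $2$-restricted $\lambda\vdash d$ as $(2^a,1^b)$. This covers only partitions with all parts $\le 2$ and omits, for example, $(3,2,1)$ when $d=6$; your key bound is stated and sketched only for that two-column family, so the argument says nothing about general $\lambda$. The paper closes this by proving a bound depending only on $|\mu|$, namely $\ell(M(\mu))\le\lceil(|\mu|+1)/4\rceil$ for every $2$-restricted $\mu$ (Lemma~\ref{Lem:lengthM}), and then splitting on whether $\lambda_m\ge 2$ (so $|\lambda[m]|\le d-2m$ and the second hypothesis suffices) or $\lambda_m\le 1$ (so $\lambda[m]$ is a single column and the first hypothesis suffices). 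You need a uniform bound of this kind to handle all $2$-restricted $\lambda$.

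In the necessity direction for even $d$ your witness $(2^{d/2})$ is not $2$-restricted, so the vanishing of $\mD_{(2^{d/2})|0}$ on $X$ is no obstruction to $d$-faithfulness; moreover $M$ and $J$ are only defined on $2$-restricted partitions, so the step $J((2^{a'}))=(2^{a'-2},1)$ underlying your formula $\ell(M((2^{a'},1^0)))=\lceil(a'+1)/2\rceil$ has no meaning here. The correct witness, used in the paper, is $(2^{d/2-1},1,1)$: it is $2$-restricted, has $1+\lfloor d/2\rfloor$ rows, and $J$ strips two rows at a time, which is exactly where the ``$+1$'' in $m+2n\ge 1+\lfloor d/2\rfloor$ comes from. (Your odd-$d$ witness $(2^{(d-1)/2},1)$ and your $(1^d)$ computation are fine and agree with the paper.)
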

\begin{proof}
    We first show that $m + 4n \ge d$ and $m + 2n \ge 1 + \lfloor d / 2 \rfloor$ are necessary conditions.
    
Consider the partition $\lambda = (1^d)$. If $\mD_{\lambda|0}$ does not vanishes on $m + nP$, then $$\mD_{\lambda|0}(m+nP)=L((1^m)|\Lambda),$$ for $\Lambda=M(1^{d-m})$ of length at most $n$. But $\Lambda=M(1^{d-m})$ has $\lceil (d-m)/4 \rceil$ parts, so we require $n\le \lceil (d-m)/4 \rceil$, which is equivalent to $4n\le d-m$.

    Likewise, consider the partition $\lambda = (2, 2, \dots, 2, 1)$ if $d$ is odd and $\lambda = (2, 2, \dots, 2, 1, 1)$ is $d$ is even. Then $\lambda$ has $r := 1 + \lfloor d / 2 \rfloor$ rows and $M(\lambda)$ has $\lceil r /2 \rceil$ parts, since $\lambda \backslash j(\lambda) = \lambda[2]$. As in the previous paragraph, it follows that $\mD_\lambda(m+nP)\not=0$ requires
    $$\lceil (1+\lfloor d/2\rfloor -m)/2\rceil\le n.$$

    Now we show these are sufficient. Let $\lambda \vdash d$ be an arbitrary partition and $m, n$ satisfying the above. If $\lambda_m = 1$, then  $M(\lambda[m])=M(1^{\ell})$ for $\ell\le d-m$, has at most $\lceil \frac{d - m}{4} \rceil$ parts. Because $m + 4n \ge d$, we have $n \ge \frac{d - m}{4}$ and $n$ is an integer, so $n \ge \lceil \frac{d - m}{4} \rceil$. Hence $\mD_{\lambda|0}$ is nonzero on $X$.
    If $m = 0$ or $\lambda_m \ge 2$, then $|\lambda[m]| \le d - 2m$. By Lemma~\ref{Lem:lengthM}, $M(\lambda[m])$ has at most $\lceil \frac{d - 2m + 1}{4} \rceil$ parts. But by assumption $$n \ge \frac{1 + \lfloor d / 2 \rfloor - m}{2} \ge \frac{2 + (d - 1) - 2m}{4}$$ as $2\lfloor d / 2 \rfloor \ge d - 1$. Since $n$ is an integer, $n \ge \lceil \frac{d - 2m + 1}{4} \rceil$, so $\mD_{\lambda|0}$ is nonzero on $X$.
\end{proof}

\begin{example}In this example, for $\mu$ the empty partition, we abbreviate $\mD_{\lambda|\mu}$ to $\mD_\lambda$.
    For any object $X$ in $\Ver_4^+$ (more generally, up to notation, in any symmetric monoidal tensor category over $k$), inside $\Rep \GL(X)\boxtimes\Rep S_3$, we have
    \[X^{\otimes 3}\;\simeq\; \mD_{2,1}(X)\boxtimes D^{(2,1)}\;\oplus \; [\mD_{(1,1,1)}(X),\mD_3(X),\mD_{(1,1,1)}(X)],\]
    where $D^{(2,1)}$ is the non-trivial (projective) simple $kS_3$-module, and the right term is indecomposable with length at most three, with $\mD_{(1,1,1)}(X)$ in top and socle and $\mD_3(X)$ in the middle. The action of $S_3$, under the canonical isomorphism
    \[k S_3\;\simeq\; \End_{k}(D^{(2,1)})\times k[x]/x^2\]
    is such that $\End(D^{(2,1)})$ acts in the obvious way and $x$ maps top to socle of the length 3 module. Hence, as per general theory, $X$ is 3-faithful if and only if
    \[kS_3\;\to\; \End_{\GL(X)}(X^{\otimes 3})\]
    is an isomorphism if and only if neither $\mD_{(2,1)}(X)$ nor $\mD_{(1,1,1)}(X)$ are zero. For example $X=P$ is 3-faithful, but not 3-discernable since $D_3(P)=0$. Moreover, we thus find in $\Rep GL(P)$
    \[P^{\otimes 3}\;\simeq\; (\xi T_3)^{\oplus 2}\oplus [T_3,T_3],\]
    where the right term is a non-split self-extension of $T_3$. In particular, the extension of $\xi T_3$ and $T_3$ in \cite[Proposition~4.10]{hu_supergroups_2024} is split.
\end{example}

\printbibliography
\end{document}